\newtheorem{theo}{Theorem}[section]
\newtheorem{defi}[theo]{Definition}
\newtheorem{coro}[theo]{Corollary}
\newtheorem{lem}[theo]{Lemma}
\newtheorem{hypo}[theo]{Assumption}
\newtheorem{Rq}[theo]{Remark}
\def\restriction#1#2{\mathchoice
              {\setbox1\hbox{${\displaystyle #1}_{\scriptstyle #2}$}
              \restrictionaux{#1}{#2}}
              {\setbox1\hbox{${\textstyle #1}_{\scriptstyle #2}$}
              \restrictionaux{#1}{#2}}
              {\setbox1\hbox{${\scriptstyle #1}_{\scriptscriptstyle #2}$}
              \restrictionaux{#1}{#2}}
              {\setbox1\hbox{${\scriptscriptstyle #1}_{\scriptscriptstyle #2}$}
              \restrictionaux{#1}{#2}}}
\def\restrictionaux#1#2{{#1\,\smash{\vrule height .8\ht1 depth .85\dp1}}_{\,#2}}
\newcommand{\1}{\mathbf{1}} 
\newcommand{\N}{\mathbb{N}}                                              
\newcommand{\R}{\mathbb{R}}                                              
\newcommand{\p}{\mathbb{P}}  
\newcommand{\E}{\mathbb{E}}                                            
\newcommand{\egalloi}{\stackrel{d}{=}}
\newcommand{\nocontentsline}[3]{}
\newcommand{\tocless}[2]{\bgroup\let\addcontentsline=\nocontentsline#1{#2}\egroup}
\title{Limit theorems for some branching measure-valued processes}
\author{Bertrand \textsc{Cloez}}
\address[B.~Cloez]{Laboratoire d'Analyse et de Math\'ematiques Appliqu\'ees, CNRS
  UMR8050, Universit\'e Paris-Est Marne-la-Vall\'ee, France} %
\email{\url{mailto:bertrand.cloez(at)univ-mlv.fr}} %
\urladdr{\url{http://bertrand.cloez.perso.sfr.fr/}}
\date{ Compiled \today}
\begin{document}
\maketitle

\begin{abstract}

We consider a particle system in continuous time, discrete population, with spatial motion and nonlocal branching. The offspring's weights and their number may depend on the mother's weight. Our setting captures, for instance, the processes indexed by a Galton-Watson tree. Using a size-biased auxiliary process for the empirical measure, we determine this asymptotic behaviour. We also obtain a large population approximation as weak solution of a growth-fragmentation equation. Several examples illustrate our results.

\end{abstract}

{\footnotesize %

\medskip


 \tableofcontents
}

\section{Introduction}
In this work, we study the evolution of a Markov process indexed by a tree in continuous time. The tree can represent a population of cells, polymers or particles. On this population, we consider the evolution of an individual characteristic. This characteristic can represent the size, the age or the rate of a nutriment. During the life of an individual,  its characteristic evolves according to an underlying Markov process. At non-homogeneous time, the individuals die and divide. When one divides, the characteristics of the offspring depend on those and their number. This model was studied in \cite{ABBZ11,GW,Feller,Bertoin,EHK10,Harris}. Here, we study the asymptotic behaviour of the empirical measure which describes the population. Following \cite{GW}, we begin to prove a many-to-one formula (or spinal decomposition, tagged fragment ...) and then deduce its long time behaviour. This formula looks like the Wald formula and reduces the problem to the study of a "typical" individual. Closely related, we can find a limit theorem in discrete time in \cite{DM10}, in continuous time with a continuous population in \cite{EW06} and for a space-structured population model in \cite{EHK10}. Our approach is closer to \cite{GW} and extends their law of large number to a variable rate of division. This extension is essential in application \cite{Feller}. In our model, the population is discrete. It is the microscopic version of some deterministic equations studied in \cite{LP,Pert1,Pert2}. Following \cite{FM,TheseT}, we scale our empirical measure and prove that these P.D.E. are macroscopic versions of our model. Before expressing our main results, we begin by giving some notations. If we start with one individual then we will use the Ulam-Harris-Neveu notation \cite{GW}:
\begin{itemize}
\item the first individual is labelled by $\emptyset$;
\item when the individual $u$ divides, then his $K$ descendants are labelled by $u1,...,uK$;
\item we denote by $\mathcal{T}$ the random set of individuals which are dead, alive or will be alive;
\item it is a subset of $\mathcal{U}= \cup_{m\geq0} \ (\N^*)^m$, where $\N=\{0,1,...\}$ and $(\N^*)^0=\{\emptyset\}$;
\item we denote by $\mathcal{V}_t$ the set of individuals which are alive at time $t$;
\item for each $u\in\mathcal{T}$, $\alpha(u)$ and $\beta(u)$ denote respectively the birth and the death date of the individual $u$;
\item we denote by $N_t$ the number of individuals alive at time $t$;
\item for each $u\in \mathcal{T}$ and $t \in [\alpha(u), \beta(u))$, the characteristic of the individual $u$ is denoted by $X^{u}_t$.
\end{itemize}
The dynamics of our model is then as follows.
\begin{itemize}
\item The characteristic of the first individual, $(X^{\emptyset}_t)_{t \in [0, \beta(\emptyset))}$ is distributed according to an underlying c\`adl\`ag strong Markov process $(X_t)_{t\geq 0}$. For sake of simplicity, we will assume that $X=(X_t)_{t\geq 0}$ takes values in a closed subset $E$ of $\R^d$ and is generated by
\begin{equation}
\label{eq:Gen-under}
G f(x) = b(x) \cdot \nabla f(x) + \sigma \Delta f(x),
\end{equation}
where $d\in \N^{*}$, $b:\R^d \rightarrow \R^d$ is a smooth function and $\sigma \in \R_+$. Here, $f$ belongs to the domain $\mathcal{D}(G)$ of $G$. This domain is described in Section \ref{sect:prelim}. Note that our approach is available for another underlying Markov process.

\item The death time $\beta(\emptyset)$ of the first individual verifies
$$
\p\left( \beta(\emptyset) > t \ | \ X^{\emptyset}_s, s \leq t \right)= \int_0^t r(X^{\emptyset}_s) ds,
$$
where $r$ is a non negative, measurable and locally bounded function. Notice that $\alpha(\emptyset)= 0$. 
\item At time $\beta(\emptyset)$, the first individual splits into a random number of children given by an independent random variable $K$ of law
$(p_k)_{k\in \N^*}$. We have $\alpha(0)=...= \alpha(K-1)=\beta(\emptyset)$.
\item We assume that the mean offspring number, which is defined by $m:x\mapsto \sum_{k\geq0} k p_k(x)$, is locally bounded on $E$. 
\item The characteristics of the new individuals are given by $(F^{(K)}_j (X^\emptyset_{\beta(\emptyset)-}, \Theta))_{1\leq j \leq K}$, where $\Theta$ is a uniform variable on $[0,1]$. The sequence $(F^{(k)}_j)_{j\leq k , k \in \N^*}$ is supposed to be a family of measurable functions. 
\item Finally, the children evolve independently from each other like the first individual.
\end{itemize}
The last point is the branching property. To obtain a limit theorem, we follow the approach of \cite{GW}. In this paper, the cell's death rate $r$ and the law of the number of descendants $(p_k)_{k \geq 1}$ are constant. A many-to-one formula is proved:
\begin{equation}
\label{MTOintro}
\frac{1}{\E[N_t]} \E\left[\sum_{u \in \mathcal{V}_t} f(X^u_t)\right] = \E[f(Y_t)],
\end{equation}
where $Y$ is generated, for any smooth function $f$ and $x\in E$, by
\begin{equation}
\label{Gcst}
 A_0 f(x) = G f(x) + r m \sum_{k \geq 1} \frac{k p_k}{m} \int_0^1 \left( \frac{1}{k} \sum_{j=1}^k  f(F_j^{(k)}(x,\theta)) - f(x)  \right) d\theta.
\end{equation}
This process evolves as $X$, until it jumps, at an exponential time with mean $1/rm$. We observe that $r$ is not the jump rate of the auxiliary process. There is a biased phenomenon. It is described in \cite{GW,Harris} and their references. We can interpret it by the fact that the faster the cells divide, the more descendants they have. That is why a uniformly chosen individual has an accelerated
rate of division. A possible generalisation of (\ref{MTOintro}) is a Feynman-Kac formula as in \cite{Harris}:
\begin{equation}
\E\left[\sum_{u\in \mathcal{V}_t} f(X^u_t)\right]=\E\left[f(Y_t) e^{\int_0^t r (Y_s) (m(Y_s)-1) ds}\right] \nonumber,
\end{equation} 
where $Y$ is an auxiliary process starting from $x_0$ and generated by \eqref{Gcst}. Using a Poisson point process, in \cite{Feller}, we get also another representation of the empirical measure to prove the extinction of a parasite population. However, it is difficult to exploit these formulas. Inspired by \cite{EW06,LP,Pert1,Pert2}, we follow an alternative approach. In (\ref{MTOintro}), $Y$ can be understood as a uniformly chosen individual. The problem is: if $r$ is not constant then a uniformly chosen individual does not follow Markovian dynamics. Our solution is to choose this individual with an appropriate weight. This weight is the eigenvector $V$ of the following operator:
$$
 \mathcal{G} f(x)=  G f (x) + r (x)  \left[ \left( \sum_{k \geq 0}  \sum_{j=1}^k \int_0^1 \ f(F^{(k)}_j (x,\theta)) d\theta p_k(x) \right) - f(x) \right].
$$
It is not the generator of a Markov process on $E$. It is described in the next section. Under some assumptions, we are able to prove the following weighted many-to-one formula:
\begin{equation}
\label{MTOO}
\frac{1}{\E[\sum_{u \in \mathcal{V}_t} V(X^u_t)]} \E\left[\sum_{u \in \mathcal{V}_t} f(X^u_t) V(X^u_t)\right] = \E[f (Y_t)],
\end{equation}
where $Y$ is an auxiliary Markov process, starting from $x_0$. It is generated by $A = M + J $, where $M$ describes the motion between the jumps and is defined by
$$
Mf(x)= \frac{G(f \times V)(x) - f(x)GV(x)}{V(x)} =  Gf(x) + \sigma\frac{\nabla V(x). \nabla f(x)}{V(x)},
$$
and $J$ describes the jump dynamics and is given by
\begin{equation*}
J f(x) = \Lambda (x) \left[\frac{\sum_{k \in \N} \sum_{j=1}^k \int_0^1 V\left(F_j^{(k)}(x, \theta)\right)  f\left(F_j^{(k)}(x, \theta)\right) d\theta p_k(x)}{\sum_{k \in \N} \sum_{j=1}^k \int_0^1 V\left(F_j^{(k)}(x, \theta) \right) d\theta p_k(x)} - f(x) \right],
\end{equation*}
where
$$
\Lambda (x)= \left[\sum_{k \in \N} \sum_{j=1}^k \int_0^1 V\left(F_j^{(k)}(x, \theta) \right) \ d\theta \ p_k(x)\right] \times \frac{r (x)}{V(x)}.
$$  
These formulas seem to be complicated but they are very simple when applied. We also observe a biased phenomenon. But contrary to the previous formulas, the bias is present in the motion and the branching mechanism. This bias has been already observed in another context \cite{EW06}. Also note that we do not assume that $\lambda_0$ is the first eigenvalue. So, it is possible to have different many-to-one formulas as can be seen in Remark \ref{rq:plusieurvp}. We can find some criteria for existence of eigenelements in \cite{BCG12,valprop,vpmito,Pinsky} and theirs references. If $Y$ is ergodic with invariant measure $\pi$ then Formula \eqref{MTOO} gives
$$
\lim_{t \rightarrow + \infty} \frac{1}{\E\left[ \sum_{u \in \mathcal{V}_t} V(X^u_t)\right]} \E\left[ \sum_{u \in \mathcal{V}_t} f(X^u_t) V(X^u_t) \right] = \int f \ d\pi,
$$
for all continuous and bounded function $f$. We improve this result:
\begin{theo}[Long time behaviour of the empirical measure]
\label{thintro}
If the following assumptions holds,
\begin{itemize}
\item $X^\emptyset_0$ is deterministic;
\item the system is non explosive; namely $N_t<+\infty$ a.s. for all $t\geq0$;
\item there exists $(V,\lambda_0)$ such that $\mathcal{G}V = \lambda_0 V$ and $V>0$;
\item $Y$ is ergodic with invariant measure $\pi$;
\end{itemize}
then for any measurable function $g$ such that:
\begin{itemize}
 \item there exists $C>0$, such that for all $x \in E$, $|g(x)| \leq C V(x)$;
 \item there exists $\alpha < \lambda_0$, such that $\E [ V^2(Y_t)] \leq C e^{\alpha t}$ and 
 $$
 \E \left[ \frac{r(Y_t)}{V(Y_t)} \int_0^1 \sum_{\underset{a \neq b}{\overset{}{a,b \in \N^*}}} \sum_{k \geq \max(a,b)} p_k(Y_s)  V\left( F_a^{(k)} (Y_s, \theta)\right)  V\left( F_b^{(k)} (Y_s, \theta)\right) d\theta \right] \leq C e^{\alpha t};
 $$
\end{itemize}
then we have
$$
\lim_{t \rightarrow + \infty} e^{- \lambda_0 t} \sum_{u \in \mathcal{V}_t} g (X^u_t) = W \int \frac{g}{V} \ d \pi,
$$
where $W= \lim_{t \rightarrow + \infty} e^{- \lambda_0 t} V(x_0)^{-1} \sum_{u\in \mathcal{V}_t} V (X^u_t)$ and the convergences hold in probability. If furthermore $V$ is lower bounded by a positive constant then
$$
 \lim_{t \rightarrow + \infty} \frac{\1_{W \neq 0}}{N_t} \sum_{u \in \mathcal{V}_t} g(X^u_t) = \1_{W \neq 0} \int \frac{g}{V} \ d\pi / \int \frac{1}{V} \ d\pi \text{ in probability }.
$$
\end{theo}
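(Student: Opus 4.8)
The plan is to reduce the statement to an analysis of the first two moments of the empirical measure $Z_t:=\sum_{u\in\mathcal V_t}\delta_{X^u_t}$, using the many‑to‑one formula \eqref{MTOO} together with a ``many‑to‑two'' companion of it. Feeding the eigen‑relation $\mathcal GV=\lambda_0V$ into the first‑moment evolution equation gives $\E_y[\langle Z_\tau,V\rangle]=e^{\lambda_0\tau}V(y)$, so that $M_t:=e^{-\lambda_0t}V(x_0)^{-1}\langle Z_t,V\rangle$ is a non‑negative martingale for the natural filtration $(\mathcal F_t)$ (branching property) and hence converges a.s.\ to $W$; I will rescale $V$ so that $V(x_0)=1$, which changes neither $\mathcal GV=\lambda_0V$, nor $Y$, nor $W$, nor the products $W\int(g/V)\,d\pi$. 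Rewriting \eqref{MTOO} with test function $\phi/V$ then yields, for every starting point $y$,
\[
\E_y\big[\langle Z_\tau,\phi\rangle\big]=e^{\lambda_0\tau}\,V(y)\,\E_y\big[(\phi/V)(Y_\tau)\big].
\]
Everything else is about promoting $M_t\to W$ to an $L^2$ statement and passing to general $g$.

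The second‑moment identity I would establish is as follows. Split $\langle Z_t,\phi\rangle^2=\langle Z_t,\phi^2\rangle+\sum_{u\neq v}\phi(X^u_t)\phi(X^v_t)$ and, in the off‑diagonal sum, trace each ordered pair $(u,v)$ to its most recent common ancestor $w$: if $w$ dies at time $s$ into $K$ children with $u\succeq$ child $a$ and $v\succeq$ child $b\neq a$, then conditionally on the state at $s$ the two subfamilies are independent copies started at $F^{(K)}_a(X^w_{s-},\Theta)$ and $F^{(K)}_b(X^w_{s-},\Theta)$, with $V$‑masses of conditional mean $e^{\lambda_0(t-s)}V(F^{(K)}_a)$ and $e^{\lambda_0(t-s)}V(F^{(K)}_b)$. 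Summing over division events — the expected number of which, in state $x$ during $[s,s+ds]$, is $\E[\langle Z_s,r\,\delta_x\rangle]\,ds$ — and inserting the displayed many‑to‑one identity into the resulting $\E[\langle Z_s,r\,(\cdot)\rangle]$, one obtains, after cancelling the mean growth,
\[
e^{-2\lambda_0 t}\,\E\Big[\sum_{u\neq v}\phi(X^u_t)\phi(X^v_t)\Big]=\int_0^t e^{-\lambda_0 s}\,\E\!\left[\frac{r(Y_s)}{V(Y_s)}\,\Psi^{\phi}_{t-s}(Y_s)\right]ds,
\]
with $\Psi^{\phi}_{\tau}(x)=\int_0^1\sum_k p_k(x)\sum_{a\neq b\le k}V(F^{(k)}_a(x,\theta))V(F^{(k)}_b(x,\theta))\,q_\tau(F^{(k)}_a(x,\theta))\,q_\tau(F^{(k)}_b(x,\theta))\,d\theta$ and $q_\tau(y):=\E_y[(\phi/V)(Y_\tau)]$.

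Taking $\phi=V$ makes $q_\tau\equiv1$, so $\frac{r(x)}{V(x)}\Psi^V_{t-s}(x)$ is exactly the kernel whose expectation the hypothesis bounds by $Ce^{\alpha s}$; with $\alpha<\lambda_0$ this makes the off‑diagonal term $\le\int_0^\infty Ce^{(\alpha-\lambda_0)s}\,ds<\infty$, while the diagonal term $\E[\langle Z_t,V^2\rangle]=e^{\lambda_0 t}\E[V(Y_t)]\le e^{\lambda_0t}\sqrt{\E[V^2(Y_t)]}$ is negligible compared with $e^{2\lambda_0t}$. Hence $\sup_t\E[M_t^2]<\infty$ and $M_t\to W$ in $L^2$. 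For general $g$ with $|g|\le CV$ I would set $h:=g-\big(\int(g/V)\,d\pi\big)V$, so $|h|\le 2CV$ and $\int(h/V)\,d\pi=0$; with $\phi=h$ the diagonal term is again negligible, and in $\Psi^h_{t-s}$ the factors $q_\tau=\E_\cdot[(h/V)(Y_\tau)]$ are bounded by $2C$ and, by ergodicity, tend to $0$ pointwise. Bounding $|\Psi^h_{t-s}|$ by $4C^2$ times the $\phi=V$ kernel — whose weighted integral carries the integrable majorant $e^{(\alpha-\lambda_0)s}$ since $\alpha<\lambda_0$ — and applying dominated convergence, first inside the $\theta$‑integral and the expectation over $Y_s$, then in the $s$‑integral, shows the off‑diagonal term for $h$ is also negligible after dividing by $e^{2\lambda_0t}$. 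Thus $e^{-\lambda_0 t}\langle Z_t,h\rangle\to0$ in $L^2$, so
\[
e^{-\lambda_0t}\langle Z_t,g\rangle=e^{-\lambda_0t}\langle Z_t,h\rangle+\Big(\int\tfrac gV\,d\pi\Big)e^{-\lambda_0t}\langle Z_t,V\rangle\;\longrightarrow\;W\int\tfrac gV\,d\pi
\]
in $L^2$, hence in probability. Finally, if $V\ge c>0$ then $\1\le c^{-1}V$ and $1/V\le c^{-1}$, so $\int(1/V)\,d\pi\in(0,\infty)$; the case $g\equiv\1$ gives $e^{-\lambda_0t}N_t\to W\int(1/V)\,d\pi$, and dividing the two limits yields $\langle Z_t,g\rangle/N_t\to\int(g/V)\,d\pi\big/\int(1/V)\,d\pi$ in probability on $\{W\neq0\}$, while on $\{W=0\}$ both sides vanish once multiplied by $\1_{W\neq0}$.

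The step I expect to be the main obstacle is the off‑diagonal estimate for general $h$: the factors $q_\tau$ must be evaluated at the \emph{random} post‑division positions $F^{(k)}_a(Y_s,\theta)$, which need not remain in a fixed compact set, so one cannot simply use $\|q_\tau\|_\infty\to0$ (that would be uniform, rather than mere, ergodicity); the layered dominated‑convergence argument above is what replaces it, and it is exactly to furnish the integrable dominating kernel that the hypothesis is phrased in that precise two‑particle form. A secondary technical point is the rigorous derivation of the many‑to‑two identity itself — the division‑event intensity (Lévy system) formula and the interchange of the sum over the a.s.\ finite genealogical tree with expectations — which is where the non‑explosion assumption and the exponential moment bounds enter.
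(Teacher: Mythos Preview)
Your proposal is correct and follows essentially the same route as the paper: the paper proves a many-to-two formula for forks (Lemma~\ref{3.9}) yielding exactly your off-diagonal identity with $q_\tau=P_\tau f$ and $J_2$, then establishes a general convergence criterion (Theorem~\ref{Gencv}) by the same diagonal/off-diagonal split, the same centering $f\mapsto f-\pi(f)$ (your $h$), and the same layered dominated-convergence argument with the $\phi=V$ kernel as majorant; Theorem~\ref{thintro} is then read off by taking $f=g/V$ bounded and $h\equiv1$. Your extra step of checking that the $\phi=V$ case already forces $\sup_t\E[M_t^2]<\infty$ under the stated hypotheses, hence upgrading the convergence from ``in probability'' to $L^2$, is a valid strengthening that the paper only states conditionally in Theorem~\ref{Gencv}.
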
 
If $r$ and $(p_k)_{k\in \N}$ are constant then $V\equiv 1$ is an eigenvector, and so this theorem generalises \cite[Theorem 1.1]{GW}. On the other hand, our model is microscopic and is a scaled version of some deterministic models. More precisely, let $(\mathbf{Z}_t)_{t\geq0}$ be the empirical measure. It is defined, for all $t\geq0$, by
 $$
 \mathbf{Z}_t= \sum_{u\in \mathcal{V}_t} \delta_{X^u_t}.
 $$
Now, let $\mathbf{Z}^{(n)}$ be distributed as $\mathbf{Z}$ and let us consider the following scaling $\mathbf{X}^{(n)} = \frac{1}{n} \mathbf{Z}^{(n)}$. We have: 

\begin{theo}[Law of large number for the large population]
\label{th:Grandpopintro}
If the following assumptions hold
\begin{itemize}
\item $T>0$; 
\item $r$ is upper bounded;
\item there exist $\bar{k}\geq0$ such that $p_k\equiv0$ for all $k\geq \bar{k}$;
\item either $E\subset \R$ and $F_j^{(k)}(x,\theta) \leq x$ for all $j\leq k$ and $\theta \in [0,1]$ or $E$ is compact;
\item The starting distribution $\mathbf{X}_0^{(n)}$ converges in distribution to $\mathbf{X}_0 \in \mathcal{M}(E)$, embedded with the weak topology;
\item we have
$$
\sup_{n\geq0} \E\left[ \mathbf{X}^{(n)}(E) \right] <+\infty.
$$
\end{itemize}
then $\mathbf{X}^{(n)}$ converges in distribution in $\mathbb{D}([0,T],\mathcal{M}(E))$ to $\mathbf{X}$ which verifies
\begin{equation}
\label{eq:thintro}
 \int_E f(x) \ \mathbf{X}_t (dx)  = \int_E f(x) \ \mathbf{X}_0 (dx) + \int_0^t \int_E \mathcal{G} f (x) \ \mathbf{X}_s (dx) ds.
\end{equation}
\end{theo}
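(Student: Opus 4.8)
The plan is the classical compactness--uniqueness scheme for a measure-valued martingale problem, in the spirit of \cite{FM}.

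\textbf{Step 1 (semimartingale decomposition and vanishing of the noise).} Fix a convergence-determining class $\mathcal{C}$ of test functions $f\in\mathcal{D}(G)$ with $f$, $\nabla f$ and $\mathcal{G}f$ bounded (e.g. $C^2$ functions with compact support when $E$ is unbounded, all of $C^2(E)$ when $E$ is compact). Applying Dynkin's formula to $\mathbf{Z}^{(n)}\egalloi\mathbf{Z}$ and dividing by $n$ gives, for $f\in\mathcal{C}$,
\[
\langle\mathbf{X}^{(n)}_t,f\rangle=\langle\mathbf{X}^{(n)}_0,f\rangle+\int_0^t\langle\mathbf{X}^{(n)}_s,\mathcal{G}f\rangle\,ds+M^{n,f}_t,
\]
where $M^{n,f}$ is a càdlàg martingale started at $0$. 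This is legitimate because $\frac{d}{dt}\E[N_t]=\E\big[\sum_{u\in\mathcal{V}_t}r(X^u_t)(m(X^u_t)-1)\big]\le\|r\|_\infty(\bar{k}-1)\E[N_t]$, so Gronwall together with $\sup_n\E[\mathbf{X}^{(n)}_0(E)]<\infty$ yields $\sup_{t\le T}\E[N^{(n)}_t]\le C(T)\,n$, hence no explosion and finiteness of all the integrals. The predictable bracket is $\langle M^{n,f}\rangle_t=\tfrac1n\int_0^t\langle\mathbf{X}^{(n)}_s,\Gamma f\rangle\,ds$ with $\Gamma f(x)=2\sigma|\nabla f(x)|^2+r(x)\sum_{k}p_k(x)\int_0^1\big(\sum_{j=1}^k f(F_j^{(k)}(x,\theta))-f(x)\big)^2 d\theta$, which is bounded because $f$ and $\nabla f$ are bounded, $r$ is bounded and $p_k\equiv0$ for $k\ge\bar k$. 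Hence by Doob $\E[\sup_{t\le T}(M^{n,f}_t)^2]\le 4\,\E[\langle M^{n,f}\rangle_T]\le C/n\to0$.

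\textbf{Step 2 (tightness).} I would establish tightness of $(\mathbf{X}^{(n)})_n$ in $\mathbb{D}([0,T],\mathcal{M}(E))$ by Jakubowski's criterion. For each coordinate $t\mapsto\langle\mathbf{X}^{(n)}_t,f\rangle$, $f\in\mathcal{C}$, the Aldous--Rebolledo criterion applies: the finite-variation part has increments over $[\tau,\tau+\delta]$ bounded by $\delta\,\|\mathcal{G}f\|_\infty\sup_{s\le T}\mathbf{X}^{(n)}_s(E)$, with $\sup_{s\le T}\E[\mathbf{X}^{(n)}_s(E)]<\infty$ from Step 1, while the martingale part is controlled by $\langle M^{n,f}\rangle\to0$. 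For the compact containment condition: when $E$ is compact it is automatic; when $E\subset\R$ and $F_j^{(k)}(x,\theta)\le x$, choose an increasing $\phi$ with relatively compact sublevel sets and $G\phi\le C(1+\phi)$; since jumps do not increase $\phi$, $\mathcal{G}\phi(x)\le G\phi(x)+r(x)(m(x)-1)\phi(x)\le C(1+\phi(x))$, and Gronwall gives $\sup_{t\le T}\E[\langle\mathbf{X}^{(n)}_t,\phi\rangle]\le C$, which together with the mass bound produces the required compact subsets of $\mathcal{M}(E)$. Thus $(\mathbf{X}^{(n)})$ is tight, and since its jumps are of size $O(1/n)$, every subsequential limit has continuous total-mass paths.

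\textbf{Step 3 (identification and uniqueness).} Let $\mathbf{X}$ be any subsequential weak limit. For fixed $f\in\mathcal{C}$ and $t$, the functional $\mu_{\cdot}\mapsto\langle\mu_t,f\rangle-\langle\mu_0,f\rangle-\int_0^t\langle\mu_s,\mathcal{G}f\rangle\,ds$ is continuous for the Skorokhod topology at every path continuous at time $t$, and for a.e. $t$ the limit $\mathbf{X}$ is a.s. continuous at $t$; evaluated along $\mathbf{X}^{(n)}$ this functional equals $M^{n,f}_t\to0$ in $L^2$, so it vanishes along $\mathbf{X}$ for a.e. $t$, hence for all $t$ by right-continuity, i.e. $\mathbf{X}$ satisfies \eqref{eq:thintro}. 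Uniqueness of the solution of \eqref{eq:thintro} for given $\mathbf{X}_0$ follows by duality: with $(P_t)_{t\ge0}$ the positive, at most exponentially growing semigroup generated by $\mathcal{G}$ constructed in Section \ref{sect:prelim}, if $\mathbf{X}^1,\mathbf{X}^2$ both solve \eqref{eq:thintro} from $\mathbf{X}_0$, then for fixed $t$ the map $s\mapsto\int_E P_{t-s}f\,d(\mathbf{X}^1_s-\mathbf{X}^2_s)$ has zero derivative on $[0,t]$ (using $\partial_s P_{t-s}f=-\mathcal{G}P_{t-s}f$ and \eqref{eq:thintro} applied to $P_{t-s}f$), so $\langle\mathbf{X}^1_t,f\rangle=\langle\mathbf{X}^2_t,f\rangle$ for all $f$; hence the limit is unique and $\mathbf{X}^{(n)}$ converges in distribution to $\mathbf{X}$.

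\textbf{Expected main obstacle.} The delicate point is the compact containment condition when $E$ is unbounded, which is precisely why the hypotheses split into ``$E$ compact'' versus ``$F_j^{(k)}(x,\theta)\le x$ on $E\subset\R$'': one must exhibit a Lyapunov function compatible with both the (possibly diffusive) motion $G$ and the fragmentation, together with the forward-in-time control of the first moment of the mass. The identification step requires some care with discontinuities in the Skorokhod topology, whereas the uniqueness for \eqref{eq:thintro} is routine once the semigroup of Section \ref{sect:prelim} is available.
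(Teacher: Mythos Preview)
Your proposal follows the same compactness--uniqueness scheme as the paper (semimartingale decomposition, Aldous--Rebolledo tightness, vanishing martingale, identification of the limit), so the overall architecture is correct. There are, however, two places where your execution diverges from the paper's.

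\textbf{Tightness in the non-compact case.} You go for a direct compact-containment argument via a Lyapunov function $\phi$ with $G\phi\le C(1+\phi)$. The paper does \emph{not} do this: it first proves tightness in $\mathbb{D}([0,T],(\mathcal{M}(E),d_v))$ for the \emph{vague} topology (where no compact containment is needed), and then upgrades to the weak topology by the M\'el\'eard--Roelly criterion. The latter requires showing that $\mathbf{X}^{(n)}(\1)\to\mathbf{X}(\1)$, which the paper obtains through a family of cutoffs $\psi_k$ satisfying $\1_{[k,\infty)}\le\psi_k\le\1_{[k-1,\infty)}$ and $G\psi_k\le C\psi_{k-1}$, together with an \emph{iterated} Gronwall argument (Lemma~\ref{cvu}) giving $\lim_{k}\limsup_n\E[\sup_{t\le T}\mathbf{X}^{(n)}_t(\psi_k)]=0$. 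Your single-Lyapunov argument is cleaner when it works, but note that the hypotheses of the theorem do not hand you such a $\phi$: with $Gf=b\cdot\nabla f+\sigma\Delta f$ and only ``$b$ smooth'', the inequality $G\phi\le C(1+\phi)$ is an extra assumption, not a consequence. The paper's $\psi_k$ route trades a single global Lyapunov inequality for a uniform-in-$k$ local one on the transition layer $[k-1,k]$, and the monotonicity $F_j^{(k)}(x,\theta)\le x$ is used precisely to ensure the branching part does not create mass above level $k$.

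\textbf{Uniqueness.} You invoke a duality argument with the semigroup $(P_t)$ generated by $\mathcal{G}$, ``constructed in Section~\ref{sect:prelim}''. That section does not construct such a semigroup; $\mathcal{G}$ is not Markovian, and what is actually proved there (Corollary~\ref{EDP1}) is uniqueness of the evolution equation via a Gronwall estimate on a norm built from compactly supported test functions. You can simply quote that result instead of the duality argument; your duality idea is morally equivalent but would require you to first build and control $P_t$ on a large enough class of test functions, which is extra work not present in the paper.

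In summary: your Step~1 and the identification half of Step~3 match the paper; your uniqueness argument should be replaced by a reference to Corollary~\ref{EDP1}; and your Step~2 in the non-compact case relies on a Lyapunov function whose existence is not guaranteed by the stated hypotheses --- the paper's vague-to-weak upgrade via the $\psi_k$ cutoffs is the device that fills that gap.
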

Here, $\mathbb{D}([0,T],\mathcal{M}(E))$ is the space of c\`ad-l\`ag functions embedded with the skohorod topology \cite{B99,critere}. We observe that if $\mathbf{X}_0$ is deterministic then $\mathbf{X}_t$ is deterministic for any time $t\geq0$. The equation \eqref{eq:thintro} can be written as
$$
\partial_t n(t,x) + \nabla \left( b(x) n(t,x) \right) + r(x) n(t,x) = \sigma \partial_{xx} n(t,x) + \sum_{k \geq 1}  \sum_{j=1}^k K_j^k \left(r \times p_k \times n(t,\cdot) \right).
$$
where $\mathbf{X}_t= n(t,x)dx$ and $K_j^k$ is the adjoint operator of $f \mapsto \int_0^1 f(F_j^{(k)} (x, \theta)d\theta$. This equation was studied in \cite{LP,Pert1,Pert2} and Theorem \ref{thintro} is relatively close to their limit theorems. We will see in the next section that it is also the Kolmogorov equation associated to $\mathbf{Z}$. So, we observe that $\mathbf{X}$ is equal to the mean measure of $\mathbf{Z}$; that is $f \mapsto \E[\int_E f(x) \ \mathbf{Z}_t(dx)]$. This average phenomenon comes from the branching property. After a branching event, each cell evolves independently from each other, there is not interaction or mutation. Another reason is the linearity of the operator $\mathcal{G}$. From the many-to-one formula, we also deduce that, in large population, the empirical measure behaves as the auxiliary process. The proof is based on the Aldous-Rebolledo criterion \cite{critere,Roel} and is inspired by \cite{FM,dernlemm,TheseT}.\\

In the end of the paper, these two theorems are applied to some structured population models. Our main example is a size-structured population. In this example, the size of cells grows linearly and if a cell dies then it divides to two descendants. Thus, there is motion between the branching events and discontinuity during division. This model is a branching version of the well known TCP windows size process \cite{TCP,GRZ,lcst,Ott2}. For this example, we are able to give some explicit formulas of the invariant distribution, the moments or the rate of convergence. We also prove that, in large population, the empirical measure behaves according to the deterministic equation \eqref{eq:thintro} plus a Gaussian noise.\\

\textbf{Outline.} 
In the next section, we introduce some  properties of the empirical measure. In Section \ref{secttpslg}, we focus our interest on the long time behaviour. We prove some many-to-one formulas and deduce a general limit theorem which implies Theorem \ref{thintro}. Section \ref{sectlargepop} is devoted to the study of large populations. In this one, we prove Theorem \ref{th:Grandpopintro}. Note that Section \ref{secttpslg} and Section \ref{sectlargepop} are independent. In Section \ref{exemple}, we give our main example, which describes the cell mitosis. Moreover, we give two theorems for the long time behaviour of our empirical measure in addition to some explicit formulas. We also give a central limit theorem for asymmetric cell division for the macroscopic limit. In section \ref{sect:autres-exemples}, we finish by two classical examples which are branching diffusions and self-similar fragmentation.

\section{Preliminaries}
\label{sect:prelim}

In this section, we describe a little more the empirical measure $(\mathbf{Z}_t)_{t\geq0}$. We recall that
$$ \forall t\geq0, \ \mathbf{Z}_t= \sum_{u \in \mathcal{V}_t} \delta_{X^{u}_t}.$$
It belongs to in the space $\mathbb{D}(\R_+,\mathcal{M}(E))$ of c\`ad-l\`ag functions with values in $\mathcal{M}(E)$, which is the set of finite measures on $E$. Let us add the following notations:
$$
\mathbf{Z}_t(f) = \int_E f(x) \mathbf{Z}_t(dx)= \sum_{u\in \mathcal{V}_t} f(X^u_t),
$$
and
$$
\mathbf{Z}_t(1+x^p)= \int_E 1 + x^p \ \mathbf{Z}_t(dx)= \sum_{u\in \mathcal{V}_t} 1 + (X^u_t)^p.
$$
We can describe the dynamics of the population with a stochastic differential equation. That is given, for any smooth function $f:(x,t) \mapsto f(x,t) =f_t(x)$ on $E\times \R_+$, by
\begin{align*}
\mathbf{Z}_t (f_t) 
=& \mathbf{Z}_0 (f_0) + \int_0^t \int_E  \mathcal{G} f_s(x) + \partial_t f_s(x) \mathbf{Z}_s (dx) ds \\
+& \int_0^t \sum_{u \in V_s} \sqrt{2 \sigma} \partial_x f_s(X^u_s) d B^u_s \\
+& \int_0^t \int_{\mathcal{U} \times \R_{+} \times \N^* \times [0,1]} [ \1_{ \{ u \in V_{s-}, l \leq r(X^u_{s-})\}} \\
&  ( \sum_{j=1}^k f_s( F^{(k)}_j (X^u_{s-},\theta))  - f_s(X^u_{s-}) ) ] \  \rho (d s, d u, d l, d k , d \theta ), 
\end{align*}
where $(B^u)_{u\in \mathcal{U}}$ is a family of independent standard Brownian motions and $\rho (d s, d u, d l, d k , d \theta )$ is Poisson point process on $\R_{+} \times \mathcal{U} \times \R_{+} \times \N^* \times [0,1]$ of intensity 
$$
\bar{\rho} (d s, d u, d l, d k , d \theta ) = ds \ n(du) \ dl \ dp_k \ d\theta.
$$
It is also independent from the Brownian motions. We have denoted by $n(du)$ the counting measure on $\mathcal{U}$ and $ds, \ dl, \ d\theta$ are Lebesgue measures. A necessary and sufficient condition for the existence of our process is the non-explosion of $\mathbf{Z}$:
\begin{hypo}[Non explosion]
\label{hyp:nonexplo}
For all $t\geq0$, $N_t < + \infty$ a.s.. 
\end{hypo}
 For instance, we have
\begin{lem}[Sufficient condition to non explosion]
\label{lem:rkborne}
If $r \leq \bar{r}$ and $p_k = 0$ for all $k\geq \bar{k}$, where $\bar{r},\bar{k}>0$, then Assumption \ref{hyp:nonexplo} holds. Moreover, for any $T>0$, we have
$$
\forall t\leq T, \ \E[N_t] \leq \E[N_0] \ \mbox{e}^{(\bar{k}-1) \bar{r} T}.
$$
\end{lem}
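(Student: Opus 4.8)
The plan is to bound the expected population by a Grönwall argument carried out up to the successive hitting times of level $n$, and then let $n\to\infty$. We may assume $\E[N_0]<+\infty$, since otherwise the announced inequality is vacuous while the non-explosion statement follows by conditioning on $N_0$ and using that, by the branching property, the population is a finite superposition of independent single-ancestor populations. First I would record two elementary facts: since $K\geq 1$ a.s., at each branching event an individual is replaced by at least one child, so $t\mapsto N_t$ is non-decreasing and is constant between events; and, setting $\tau_n=\inf\{t\geq 0:N_t\geq n\}$ (with $\inf\emptyset=+\infty$) and $\tau_\infty=\lim_n\tau_n$ for the explosion time, on $[0,\tau_n)$ the system consists of at most $n$ individuals, so its construction from the driving Brownian family and the Poisson point process $\rho$ is unambiguous and the evolution equation stated just above Assumption \ref{hyp:nonexplo} is valid there.

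Applying that equation with the constant function $f\equiv 1$ (the Brownian term then vanishes and $\mathcal{G}1(x)=r(x)(m(x)-1)$), or simply decomposing the pure-jump process $N^{\tau_n}$ into its predictable compensator plus a martingale, gives
$$
N_{t\wedge\tau_n}=N_0+\int_0^{t\wedge\tau_n}\sum_{u\in\mathcal{V}_s} r(X^u_s)\bigl(m(X^u_s)-1\bigr)\,ds+M_{t\wedge\tau_n},
$$
where $M^{\tau_n}$ is a true martingale: on $[0,t\wedge\tau_n]$ there are at most $n$ live individuals, each branching at rate at most $\bar r$ into at most $\bar k$ children, so the jump sizes and the jump intensity are bounded. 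Taking expectations and using $r\leq\bar r$ together with $m(x)-1\leq\bar k-1$ (which follows from $p_k\equiv 0$ for $k\geq\bar k$) yields
$$
\E[N_{t\wedge\tau_n}]\;\leq\;\E[N_0]+(\bar k-1)\bar r\int_0^t\E[N_{s\wedge\tau_n}]\,ds,
$$
and Grönwall's lemma gives the uniform bound $\E[N_{t\wedge\tau_n}]\leq\E[N_0]\,e^{(\bar k-1)\bar r\,t}$ for every $n$ and $t$.

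It remains to let $n\to\infty$. On $\{\tau_n\leq t\}$ monotonicity of $N$ gives $N_{t\wedge\tau_n}=N_{\tau_n}\geq n$, so $n\,\p(\tau_n\leq t)\leq\E[N_{t\wedge\tau_n}]\leq\E[N_0]\,e^{(\bar k-1)\bar r\,t}$; letting $n\to\infty$ forces $\p(\tau_\infty\leq t)=0$ for every $t$, i.e. $\tau_\infty=+\infty$ a.s. and $N_t<+\infty$ a.s. for all $t$, which is exactly Assumption \ref{hyp:nonexplo}. On the almost sure event $\{\tau_\infty=+\infty\}$ one has $N_{t\wedge\tau_n}\uparrow N_t$, so monotone convergence turns the uniform bound into $\E[N_t]\leq\E[N_0]\,e^{(\bar k-1)\bar r\,t}\leq\E[N_0]\,e^{(\bar k-1)\bar r\,T}$ for all $t\leq T$, as claimed.

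The only delicate point is ensuring that the evolution equation and the martingale property are legitimately available before non-explosion is known; this is precisely why everything is done up to the stopping times $\tau_n$, where the process is a genuine finite particle system and no circularity arises. An equivalent route is a pathwise coupling: construct on the same randomness a Markov branching process in which each particle is replaced by exactly $\bar k$ offspring at rate exactly $\bar r$, verify by induction over branching events that it dominates $(N_t)$, and invoke the classical non-explosion of linear birth processes together with $\frac{d}{dt}\E[\widetilde N_t]=(\bar k-1)\bar r\,\E[\widetilde N_t]$.
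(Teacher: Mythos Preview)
Your proof is correct. The paper's own argument is a one-sentence sketch: it simply observes that $N_t$ is stochastically dominated by a continuous-time Galton-Watson process with constant branching rate $\bar r$ and offspring number at most $\bar k$, for which non-explosion and the exponential moment bound are classical. This is exactly the ``equivalent route'' you mention in your last paragraph.

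Your main argument takes a genuinely different path: you localise with the hitting times $\tau_n$, apply the evolution equation for $f\equiv 1$ up to $\tau_n$ (where it is unambiguously valid), and run a Gr\"onwall inequality to get a bound uniform in $n$, from which both non-explosion and the moment estimate follow by monotone convergence. What this buys over the paper's coupling sketch is self-containment: you do not need to invoke an external coupling construction or the classical theory of Markov branching processes, and the same localisation scheme is exactly what the paper later uses in Lemma~\ref{CS} when $r$ is unbounded and no dominating Galton-Watson process is available. The coupling approach, on the other hand, is shorter and makes the probabilistic picture transparent. Both are standard; yours is the more robust template.
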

\begin{proof}
In this case, we can bound $N_t$ by a branching process independent of the underlying dynamics.
\end{proof}

If there is no explosion then we have a semi-martingale decomposition. Before giving it, we give some precisions and notations about the domain $\mathcal{D}(G)$ of $G$. If $G$ is given by \eqref{eq:Gen-under} then we have $\mathcal{D}(G) = \mathcal{C}^2_b(E,\R)$, which is the set of bounded and $\mathcal{C}^2$ functions with bounded derivatives. We denote by $\mathcal{D}_t(G)$ the domain of the Markov process $(X_t,t)_{t\geq0}$ . Also, we have $\mathcal{D}_t(G) = \mathcal{C}^{1,2}_b(\R_+ \times E,\R)$. It is the set of bounded functions, which are $\mathcal{C}^1$ in their first coordinate, $\mathcal{C}^2$ in their second, and which have bounded derivatives. Finally we denote by $\mathcal{D}(G^2)$ (resp. $\mathcal{D}_t(G^2)$ ) the set of function $f\in \mathcal{D}(G)$ (resp. $f\in\mathcal{D}_t(G)$ ) such that $f^2 \in \mathcal{D}(G)$(resp. $f\in\mathcal{D}_t(G^2)$ ) .

\begin{lem}[Semi-martingale Decomposition]
\label{semimart}
If Assumption \ref{hyp:nonexplo} holds, then for all bounded $f=(f_t)_{t\geq0} \in \mathcal{D}_t(G)$ and $t \geq 0$, we have
$$
 \mathbf{Z}_t(f_t) = \mathbf{Z}_0(f_0) + \mathbf{M}_t(f) + \mathbf{V}_t(f) 
$$
where
\begin{align*}
\mathbf{V}_t(f) &= \int_0^t \mathbf{Z}_s ( \mathcal{G} f_s + \partial_s f_s)  ds,
\end{align*}
and if $f\in\mathcal{D}(G^2)$ then the bracket of $ \mathbf{M}_t(f) $ is given by
\begin{align*}
\langle \mathbf{M}(f) \rangle_t 
&= \int_0^t G(f_s^2)(x)- 2f_s(x) Gf_s(x) \mathbf{Z}_s (dx)\\
&+ \int_E r (x) \int_0^1 \sum_{k \in \N^*} \left(\sum_{j=1}^k f_s(F^{(k)}_j \left(x,\theta)\right)-f_s(x)\right)^2  p_k(x)  d\theta  \mathbf{Z}_s (dx) ds
\end{align*}
\end{lem}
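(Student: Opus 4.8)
The plan is to start from the stochastic differential equation for $\mathbf{Z}_t(f_t)$ stated at the beginning of this section, which holds path by path for $f\in\mathcal{D}_t(G)$, and to identify the finite-variation part and the martingale part of the right-hand side. The term $\int_0^t\int_E(\mathcal{G}f_s(x)+\partial_s f_s(x))\,\mathbf{Z}_s(dx)\,ds$ is already of finite variation and will become $\mathbf{V}_t(f)$; here one must check that the passage from the raw branching contribution $\sum_{j=1}^k f_s(F_j^{(k)}(x,\theta))-f_s(x)$ integrated against the intensity $ds\,n(du)\,dl\,dp_k\,d\theta$ gives exactly the jump part of $\mathcal{G}f_s$, which is immediate after integrating $l$ over $[0,r(x)]$ and $\theta$ over $[0,1]$. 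The remaining two terms — the Brownian integral $\int_0^t\sum_{u\in\mathcal{V}_s}\sqrt{2\sigma}\,\partial_x f_s(X^u_s)\,dB^u_s$ and the compensated Poisson integral — together define $\mathbf{M}_t(f)$, and the point is that each is a local martingale, hence (under Assumption \ref{hyp:nonexplo}, which makes $N_t$ finite and the sums well defined) a true martingale when $f$ is bounded with bounded derivatives.

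Next I would compute the predictable quadratic variation. Since the Brownian motions $(B^u)_{u}$ are mutually independent and independent of the Poisson process $\rho$, the bracket splits as a sum of two contributions with no cross term. The continuous martingale part $\int_0^t\sum_{u\in\mathcal{V}_s}\sqrt{2\sigma}\,\partial_x f_s(X^u_s)\,dB^u_s$ has bracket $\int_0^t\sum_{u\in\mathcal{V}_s}2\sigma(\partial_x f_s(X^u_s))^2\,ds=\int_0^t\int_E 2\sigma(\partial_x f_s(x))^2\,\mathbf{Z}_s(dx)\,ds$, and one recognizes $2\sigma(\partial_x f_s)^2=G(f_s^2)-2f_s\,Gf_s$ by a direct expansion using $G=b\cdot\nabla+\sigma\Delta$ (the carré-du-champ identity); this requires $f_s^2\in\mathcal{D}(G)$, i.e. $f\in\mathcal{D}(G^2)$, which is exactly the hypothesis imposed for the bracket formula. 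The purely discontinuous martingale part is a compensated integral against $\rho$, so its bracket is obtained by integrating the square of the jump size against the compensator $\bar\rho$: the jump of $\mathbf{Z}_\cdot(f_\cdot)$ at a branching event of individual $u$ with $k$ children and parameter $\theta$ is $\sum_{j=1}^k f_s(F_j^{(k)}(X^u_{s-},\theta))-f_s(X^u_{s-})$, its square integrated over $l\in[0,r(X^u_{s-})]$, over $k$ against $p_k$, over $\theta\in[0,1]$, and summed over $u\in\mathcal{V}_{s-}$ gives precisely the second displayed term of $\langle\mathbf{M}(f)\rangle_t$.

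The main obstacle is integrability and the local-martingale-to-martingale upgrade: a priori $N_t$ could fluctuate enough that the stochastic integrals are only local martingales, so one must produce a localizing sequence and then justify taking expectations. I would localize by the stopping times $\tau_p=\inf\{t\ge 0:N_t\ge p\}$; on $[0,\tau_p]$ the number of particles is bounded, so with $f$ and its derivatives bounded both stochastic integrals are genuine $L^2$ martingales and the bracket identities hold by the standard rules for Brownian and Poisson stochastic integrals. Under Assumption \ref{hyp:nonexplo}, $\tau_p\to+\infty$ a.s., and a monotone/dominated convergence argument (using boundedness of $f$ and Lemma \ref{lem:rkborne}-type control on $\E[N_t]$ when available, or simply a.s. convergence at fixed $\omega$ for the identification of paths) lets one remove the localization and conclude that $\mathbf{M}(f)$ is a martingale with the stated bracket. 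The decomposition itself, being a pathwise rearrangement of the SDE, needs no integrability beyond well-definedness of the sums, so the identity $\mathbf{Z}_t(f_t)=\mathbf{Z}_0(f_0)+\mathbf{M}_t(f)+\mathbf{V}_t(f)$ holds as soon as Assumption \ref{hyp:nonexplo} does.
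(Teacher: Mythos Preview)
Your proposal is correct and follows essentially the same route as the paper, which dispatches the lemma in one line as ``an application of Dynkin and It\^o formulas'' with references to Jacod and Ikeda--Watanabe. You have simply unpacked what those formulas yield here: the SDE for $\mathbf{Z}_t(f_t)$ splits into the compensated Poisson integral plus the Brownian integral (the local martingale part) and the drift $\int_0^t \mathbf{Z}_s(\mathcal{G}f_s+\partial_s f_s)\,ds$, and the bracket is read off from the carr\'e-du-champ of $G$ together with the square-of-jumps integrated against the compensator $\bar\rho$.
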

\begin{proof}
It is an application of Dynkin and It\^o formulas, see for instance \cite[Lemma 3.68 p .487]{Jac} and \cite[Theorem 5.1, p.67]{IW}. 
\end{proof}

We define the mean measure $(\mathbf{z}_t)_{t\geq0}$, for any continuous and bounded function $f$ on $E$, by 
$$
\forall t\geq 0, \ \mathbf{z}_t (f)= \E(\mathbf{Z}_t(f))= \E\left[\sum_{u\in \mathcal{V}_t} f(X^u_t)\right].
$$

\begin{coro}[Evolution equation for the mean measure]
\label{EDP1}
Under Assumption \ref{hyp:nonexplo}, if $f$ is a continuous and bounded function on $E$ then for any $t\geq 0$, we have:
$$
\mathbf{z}_t (f) = \mathbf{z}_0 (f) + \int_0^t \mathbf{z}_s ( G f) + \int_E r (x) \ \sum_{k \geq 1}  \sum_{j=1}^k \int_0^1 \ f\left(F^{(k)}_j (x,\theta)\right) \ d\theta \ p_k(x) - f(x) \ \mathbf{z}_s (dx)  ds.
$$
Furthermore, $(\mathbf{z}_t)_{t\geq0}$ is the unique solution of this integro-differential equation. 
\end{coro}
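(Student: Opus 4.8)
The plan is to obtain the evolution equation by taking expectations in the semimartingale decomposition of Lemma~\ref{semimart}, and to prove uniqueness by a duality argument against the resolvent of $\mathcal{G}$ (equivalently, its backward Cauchy problem).

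\textbf{The equation.} Apply Lemma~\ref{semimart} with a time-independent $f\in\mathcal{D}(G)=\mathcal{C}^2_b(E,\R)$, so that $\mathbf{Z}_t(f)=\mathbf{Z}_0(f)+\mathbf{M}_t(f)+\int_0^t\mathbf{Z}_s(\mathcal{G}f)\,ds$. Since $\mathbf{M}(f)$ is a priori only a local martingale, stop it along $\tau_n:=\inf\{t\geq0:N_t\geq n\}$, which increases to $+\infty$ by Assumption~\ref{hyp:nonexplo}; the stopped process is a genuine martingale, whence
\[
\E\big[\mathbf{Z}_{t\wedge\tau_n}(f)\big]=\mathbf{z}_0(f)+\E\Big[\int_0^{t\wedge\tau_n}\mathbf{Z}_s(\mathcal{G}f)\,ds\Big].
\]
Letting $n\to+\infty$ and bringing the limit inside the expectations by dominated convergence (using $|\mathbf{Z}_s(h)|\leq\|h\|_\infty N_s$, together with the moment bound of Lemma~\ref{lem:rkborne} in the bounded-rate case) gives $\mathbf{z}_t(f)=\mathbf{z}_0(f)+\int_0^t\mathbf{z}_s(\mathcal{G}f)\,ds$, and substituting $\mathcal{G}f(x)=Gf(x)+r(x)\big(\sum_{k\geq1}\sum_{j=1}^k\int_0^1 f(F^{(k)}_j(x,\theta))\,d\theta\,p_k(x)-f(x)\big)$ is precisely the claimed identity. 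Running the same computation with $f_s(x)=\varphi(s)\psi(x)$ and extending by linearity and density also produces the time-dependent version $\mathbf{z}_t(f_t)=\mathbf{z}_0(f_0)+\int_0^t\mathbf{z}_s(\mathcal{G}f_s+\partial_s f_s)\,ds$.

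\textbf{Uniqueness.} Let $(\mu_t)_{t\geq0}$ be any solution with $\mu_0=\mathbf{z}_0$. Testing the equation against $f\equiv1$ (which lies in $\mathcal{D}(G)$, with $\mathcal{G}1=r\,(m-1)$) gives $\mu_t(1)=\mu_0(1)+\int_0^t\mu_s(r\,(m-1))\,ds$, hence an a priori exponential bound on $\mu_t(1)$ in the bounded-rate regime, so the Laplace transform $\hat\mu_\lambda(f):=\int_0^\infty e^{-\lambda t}\mu_t(f)\,dt$ is well defined for $\lambda$ large. Integrating the equation against $e^{-\lambda t}\,dt$ (Fubini) yields $\hat\mu_\lambda\big((\lambda-\mathcal{G})f\big)=\mu_0(f)$ for all $f\in\mathcal{D}(G)$. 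For $g$ continuous with compact support one solves $(\lambda-\mathcal{G})f=g$ by $f=R_\lambda g$, where $R_\lambda$ is the resolvent of the Feynman--Kac semigroup $Q_tg(x)=\E_x\big[g(Y_t)\exp\big(\int_0^t r(Y_u)(m(Y_u)-1)\,du\big)\big]$ recalled in the Introduction, with $Y$ generated by \eqref{Gcst}; under the standing hypotheses $R_\lambda g\in\mathcal{D}(G)$. Thus $\hat\mu_\lambda(g)=\mu_0(R_\lambda g)$ depends only on $\mathbf{z}_0$, for all such $g$ and all large $\lambda$, so by injectivity of the Laplace transform and right-continuity in $t$ one gets $\mu_t=\mathbf{z}_t$ for every $t$. (Equivalently, one tests the time-dependent equation against $f_s=Q_{T-s}g$, which solves $\partial_s f_s+\mathcal{G}f_s=0$, $f_T=g$, to read off $\mu_T(g)=\mathbf{z}_0(Q_Tg)$.)

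\textbf{Main obstacle.} The real work lies not in the bookkeeping above but in the analysis of $\mathcal{G}$: one must verify that the Feynman--Kac weight is integrable on bounded time intervals --- so that $Q_t$ and $R_\lambda$ are well-defined bounded operators --- and that $R_\lambda g$ (respectively $Q_{T-s}g$) is regular enough to belong to $\mathcal{D}(G)$ (respectively $\mathcal{D}_t(G)$), which is where the smoothness of $b$ and the local boundedness of $r$ and $m$ enter. In the regime of Lemma~\ref{lem:rkborne} this analytic input can be avoided: writing $\mathcal{G}=G+\mathcal{J}$ with $\mathcal{J}:=\mathcal{G}-G$ a bounded operator on the space of bounded measurable functions, the mild form $\mathbf{z}_t=(P^G_t)^*\mathbf{z}_0+\int_0^t(P^G_{t-s})^*\mathcal{J}^*\mathbf{z}_s\,ds$ against the diffusion semigroup $(P^G_t)_{t\geq0}$ lets one close a Gronwall estimate in total-variation norm for the difference of two solutions, forcing it to vanish.
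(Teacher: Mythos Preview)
Your derivation of the equation matches the paper's: both take expectations in the decomposition of Lemma~\ref{semimart}, and you are simply more explicit about localising the martingale part before passing to the limit.

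For uniqueness, however, your route is genuinely different. The paper argues directly on the difference of two solutions: fix a compact $K\subset E$, restrict to test functions $f\in\mathcal{C}^2_b$ vanishing outside $K$ with $\|Gf\|_\infty\leq1$, use the local boundedness of $r$ and $m$ on $K$ to get $\|\mathcal{G}f\|_\infty\leq C_K$, and close a Gronwall inequality on $|\mu_t(f)-\nu_t(f)|$; equality for compactly supported $f$ is then pushed to all of $C_b$ by approximation under the non-explosion hypothesis. This is shorter and, crucially, sidesteps exactly the ``main obstacle'' you flag: no Feynman--Kac semigroup, no resolvent, no regularity of $R_\lambda g$ or $Q_{T-s}g$ is ever needed, and the argument runs under Assumption~\ref{hyp:nonexplo} alone rather than the bounded-rate regime of Lemma~\ref{lem:rkborne}. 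What your duality approach buys in return is the explicit representation $\mu_t(g)=\mathbf{z}_0(Q_t g)$, which is precisely what the paper establishes later (Lemma~\ref{lem:WMTO} is this identity after the $h$-transform) and exploits throughout Section~\ref{secttpslg}; your mild-form Gronwall in total variation is also a clean self-contained alternative in the bounded-rate case. One point of comparison worth noting: since $\mathcal{G}$ is nonlocal (a mother outside $K$ may place offspring inside $K$), $\mathcal{G}f$ need not vanish off the support of $f$, so the paper's uniform bound on $\mathcal{G}f$ tacitly requires more care than it indicates; your argument does not run into this issue.
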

The previous equation can be written as
$$
\partial_t n(t,x) + \nabla \left( b(x) n(t,x) \right) + r(x) n(t,x) = \sigma \partial_{xx} n(t,x) + \sum_{k \geq 1}  \sum_{j=1}^k K_j^k \left(r \times p_k \times n(t,\cdot) \right).
$$
where $\mathbf{z}_t= n(t,x)dx$ and $K_j^k$ is the adjoint of $f \mapsto \int_0^1 f(F_j^{(k)} (x, \theta)d\theta$.
\begin{proof}
We just have to prove the uniqueness. Consider two probability measure-valued processes $(\mu_t)_{t\geq 0}$ and $(\nu_t)_{t\geq0}$ solution of this P.D.E. with same starting distribution $\mu_0 = \nu_0$. Let $K$ be a compact subset of $E$ and let us consider the following norm
$$
\Vert m_1 -m_2 \Vert = \sup_{f \in \mathcal{F}_K} |m_1(f)-m_2(f)| ,
$$
where $f \in \mathcal{C}^2_b(E,\R)$ belongs to $\mathcal{F}_K$ if and only if $\Vert G f \Vert_\infty \leq 1$ and $f(x)=0$ when $x\notin K$. Now, let $f\in  \mathcal{F}_K$, if $x\notin K$ then $\mathcal{G} f(x)=0$. The functions $r$ and $m$ are bounded on $K$ and thus there exists $C_K>0$ such that $\Vert \mathcal{G}f \Vert_\infty \leq C_K$ and then
 \begin{align*}
 |\mu_t(f)-\nu_t(f)| 
 &\leq  C_K \int_0^t  \Vert \mu_s -\nu_s \Vert ds.
 \end{align*}
 Taking the supremum and using Gronwall Lemma we deduce that $\mu_t(f) =\nu_t (f)$ for any $f \in \mathcal{F}_K$. As there is no explosion, we can approach any continuous and bounded function by dominated convergence, and then the equality holds for any continuous and bounded function.
\end{proof}

\section{Long time's behaviour}

\label{secttpslg}
\label{longtime}
Let us recall that
$$
\mathcal{G} f(x)=  G f (x) + r (x) \left[ \sum_{k \geq 0}  \sum_{j=1}^k \int_0^1 \ f(F^{(k)}_j (x,\theta)) \ d\theta \ p_k(x) - f(x) \right].
$$
In the following, we will prove some formulas which characterise the mean behaviour of our model. Then we will use them to prove our limit theorems. 

\subsection{Eigenelements and auxiliary process}

As said in introduction, the existence of eigenelements is fundamental in our approach. Henceforth, we assume the following.

\begin{hypo}[Existence of eigenelements]
\label{hyp:vp}
Assumption \ref{hyp:nonexplo} holds, and there exist $\lambda_0>0$ and $V \in \mathcal{D}(G)$ such that $V>0$ and $\mathcal{G} V = \lambda_0 V$.
\end{hypo}

Under this assumption, we introduce the martingale $(\mathbf{Z}_t(V) e^{-\lambda_0 t})_{t\geq0}$ which plays an important role in the proof of theorem \ref{thintro}.

\begin{lem}[Martingale properties]
\label{lem:Vmart}
If Assumption \ref{hyp:vp} holds  and 
$$
\mathbf{z}_0(V) <+ \infty,
$$
then the process $(\mathbf{Z}_t(V)e^{-\lambda_0 t})_{t \geq 0}$ is a martingale. Moreover, it converges  almost surely to a random variable $W$. 
\end{lem}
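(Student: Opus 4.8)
The plan is to apply the semi-martingale decomposition of Lemma \ref{semimart} to the time-dependent test function $f_s(x) = V(x)e^{-\lambda_0 s}$, which is legitimate since $V \in \mathcal{D}(G)$ and the exponential factor is smooth in time, so $f = (f_s)_{s\geq 0} \in \mathcal{D}_t(G)$; the boundedness hypothesis of the lemma is handled on each finite interval $[0,t]$ where $e^{-\lambda_0 s}$ is bounded. With this choice, the finite-variation part becomes
$$
\mathbf{V}_t(f) = \int_0^t \mathbf{Z}_s\big( \mathcal{G}(Ve^{-\lambda_0 s}) - \lambda_0 V e^{-\lambda_0 s}\big)\, ds = \int_0^t e^{-\lambda_0 s}\, \mathbf{Z}_s\big( \mathcal{G}V - \lambda_0 V \big)\, ds = 0,
$$
using linearity of $\mathcal{G}$ in its argument (it acts in the space variable only, so it commutes with the scalar $e^{-\lambda_0 s}$) and the eigenrelation $\mathcal{G}V = \lambda_0 V$ from Assumption \ref{hyp:vp}. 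Hence $\mathbf{Z}_t(V)e^{-\lambda_0 t} = \mathbf{Z}_0(V) + \mathbf{M}_t(f)$, so the process is a local martingale. To upgrade this to a genuine martingale, I would use the condition $\mathbf{z}_0(V) < +\infty$ together with Corollary \ref{EDP1} applied to $V$ (or rather its time-scaled evolution equation): one checks that $\E[\mathbf{Z}_t(V)e^{-\lambda_0 t}] = \mathbf{z}_0(V)$ for all $t$, i.e. the expectation is constant, which combined with nonnegativity (since $V>0$ and $\mathbf{Z}_t$ is a positive measure) and the local martingale property yields that it is a true martingale — a nonnegative local martingale with constant expectation is a martingale.

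Once we have a nonnegative martingale, almost sure convergence to a limit $W$ is immediate from Doob's martingale convergence theorem: a nonnegative (super)martingale converges almost surely to an integrable random variable. So the second assertion requires no further work beyond invoking that classical result.

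The step I expect to require the most care is the justification that $(\mathbf{Z}_t(V)e^{-\lambda_0 t})_{t\geq 0}$ is a true martingale rather than merely a local one. The decomposition of Lemma \ref{semimart} only asserts that $\mathbf{M}_t(f)$ is a local martingale, and $V$ need not be bounded, so one cannot simply quote a boundedness argument; the clean route is to use the mean-measure identity from Corollary \ref{EDP1} (in its version for time-dependent test functions, which gives $\frac{d}{dt}\mathbf{z}_t(V) = \lambda_0 \mathbf{z}_t(V)$, hence $\mathbf{z}_t(V) = \mathbf{z}_0(V)e^{\lambda_0 t}$ and therefore $\E[\mathbf{Z}_t(V)e^{-\lambda_0 t}] = \mathbf{z}_0(V) < +\infty$), and then appeal to the fact that a nonnegative local martingale with constant expectation is a uniformly integrable... no — with merely constant expectation it is a martingale (Fatou gives the supermartingale property, and equality of expectations forces the martingale property on each $[s,t]$). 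This last implication, plus checking that Corollary \ref{EDP1} genuinely applies to $V$ under Assumption \ref{hyp:vp} (continuity and the integrability needed to run its argument), is where the real content lies; everything else is bookkeeping.
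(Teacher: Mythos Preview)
Your argument is correct, but the paper takes a shorter and slightly different route. Instead of applying the semi-martingale decomposition of Lemma~\ref{semimart} to the time-dependent test function $V(x)e^{-\lambda_0 s}$ and then upgrading the resulting local martingale to a true one, the paper first uses Corollary~\ref{EDP1} exactly as you suggest to obtain $\mathbf{z}_t(V)=\mathbf{z}_0(V)e^{\lambda_0 t}$, and then invokes the Markov property of the measure-valued process $(\mathbf{Z}_t)_{t\geq 0}$ directly: $\E[\mathbf{Z}_{t+s}(V)\mid \mathcal{F}_s]=\mathbf{Z}_s(V)e^{\lambda_0 t}$, which gives the martingale property in one line. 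This bypasses entirely the issue you flagged as the delicate point---namely that $V$ need not be bounded, so Lemma~\ref{semimart} as stated does not literally apply, and one must argue that the nonnegative local martingale with constant expectation is a true martingale. Your route works and makes the local structure explicit, but the paper's Markov-property argument is cleaner precisely because it never passes through a local martingale. The almost sure convergence is handled identically in both proofs, via the positive martingale convergence theorem.
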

\begin{proof}
First, by corollary \ref{EDP1}, we have
\begin{align*}
\mathbf{z}_t(V)
&=\mathbf{z}_0(V) + \int_0^t \mathbf{z}_s(\mathcal{G}V) ds \\
&=\mathbf{z}_0(V) + \lambda_0 \int_0^t \mathbf{z}_s(V) ds.
\end{align*}
Hence for all $t\geq0$, we have $\mathbf{z}_t(V)= \mathbf{z}_0 e^{\lambda_0 t}$. Then if $\mathcal{F}_t= \sigma\{ Z_s \ | \ s \leq t  \}$ then  the Markov properties, applied on $\mathbf{Z}$, gives
$$
\E[\mathbf{Z}_{t+s}(V) | \mathcal{F}_s] = \E[\widetilde{\mathbf{Z}}_t (V) | \widetilde{\mathbf{Z}}_0 = \mathbf{Z}_s ],
$$
where $\widetilde{\mathbf{Z}}$ is distributed as $\mathbf{Z}$. Then $\E[\mathbf{Z}_{t+s}(V) | \mathcal{F}_s] = \mathbf{Z}_s(V) e^{\lambda_0 t}$ and thus
$$
\E[\mathbf{Z}_{t+s}(V) e^{-\lambda_0 (t+s)} \ | \ \mathcal{F}_s] = \mathbf{Z}_s(V) e^{\lambda_0 s}.
$$
Since $(\mathbf{Z}_t(V)e^{-\lambda_0 t})_{t \geq 0}$ is a positive martingale, it converges almost surely.
\end{proof}

\begin{lem}[Weighted many-to-one formula]
\label{lem:WMTO}
Under Assumption \ref{hyp:vp}, if $\mathbf{Z}_0=\delta_{x_0}$, where $x_0 \in E$, then we have
\begin{equation}
\label{eq:WMTO}
 \frac{1}{\E\left[\sum_{u \in \mathcal{V}_t} V(X^u_t) \right]} \E\left[\sum_{u \in \mathcal{V}_t} V(X^u_t) f(X^u_t,t) \right] = \E[f(Y_t,t) \ | \ Y_0 =x_0],
\end{equation}
for any non negative function $f$ on $E\times \R_+$ and $t\geq0$, where $Y$ is a Markov process generated by $A$.
\end{lem}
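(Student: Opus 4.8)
\emph{Step 1 (the algebraic identity).} The plan is to characterise the renormalised $V$-weighted mean measure of the population as the unique solution of the forward equation attached to $A$, and then to identify it with the law of $Y_t$, in the same spirit as the characterisation of $(\mathbf{z}_t)$ in Corollary \ref{EDP1}. The cornerstone is the pointwise identity
\[
\mathcal{G}(V\psi)(x)=V(x)\,A\psi(x)+\lambda_0\,V(x)\,\psi(x),\qquad x\in E,
\]
valid for $\psi\in\mathcal{D}(G)$, which I would prove by a direct computation from the definitions of $M$, $J$ and $\Lambda$. One has $V(x)M\psi(x)=G(V\psi)(x)-\psi(x)GV(x)$ and, since $\Lambda(x)V(x)=r(x)\sum_{k}\sum_{j=1}^{k}\int_0^1 V(F_j^{(k)}(x,\theta))\,d\theta\,p_k(x)$,
\[
V(x)J\psi(x)=r(x)\Big[\textstyle\sum_{k}\sum_{j=1}^{k}\int_0^1 V(F_j^{(k)}(x,\theta))\,\psi(F_j^{(k)}(x,\theta))\,d\theta\,p_k(x)-\psi(x)\sum_{k}\sum_{j=1}^{k}\int_0^1 V(F_j^{(k)}(x,\theta))\,d\theta\,p_k(x)\Big].
\]
Subtracting $V\,A\psi$ from $\mathcal{G}(V\psi)$, the terms involving $\psi$ evaluated at the offspring positions cancel — this is exactly what the denominator of $J$ and the factor $\Lambda$ are designed for — leaving $\psi(x)\big[GV(x)+r(x)(\sum_{k}\sum_{j}\int_0^1 V(F_j^{(k)}(x,\theta))\,d\theta\,p_k(x)-V(x))\big]=\psi(x)\,\mathcal{G}V(x)=\lambda_0 V(x)\psi(x)$, using Assumption \ref{hyp:vp}. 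Note also that $A\mathbf{1}=0$.

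\emph{Step 2 (evolution of the weighted mean measure).} Since $V\in\mathcal{D}(G)=\mathcal{C}^2_b(E,\R)$ is bounded, $V\psi\in\mathcal{D}(G)$ for every $\psi\in\mathcal{D}(G)$, and $\mathbf{z}_t(V\psi)$ is finite because $\mathbf{z}_t(V)=V(x_0)e^{\lambda_0 t}$ (Lemma \ref{lem:Vmart}, using $\mathbf{Z}_0=\delta_{x_0}$, so that $\mathbf{z}_0(V)=V(x_0)<\infty$). Applying Corollary \ref{EDP1} to the function $V\psi$ and substituting Step 1 gives
\[
\mathbf{z}_t(V\psi)=V(x_0)\psi(x_0)+\int_0^t \mathbf{z}_s(V\,A\psi)\,ds+\lambda_0\int_0^t \mathbf{z}_s(V\psi)\,ds .
\]
Hence the measures $\mu_t$ defined by $\mu_t(\psi):=e^{-\lambda_0 t}\mathbf{z}_t(V\psi)/V(x_0)$ satisfy $\mu_0=\delta_{x_0}$ and $\mu_t(\psi)=\psi(x_0)+\int_0^t\mu_s(A\psi)\,ds$ for all $\psi\in\mathcal{D}(G)$; taking $\psi\equiv\mathbf{1}$ and using $A\mathbf{1}=0$ shows that each $\mu_t$ is a probability measure, consistently with $\mathbf{z}_t(V)=V(x_0)e^{\lambda_0 t}$. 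On the other hand, Dynkin's formula for the Markov process $Y$ generated by $A$ and started at $x_0$ shows that $\mathbb{Q}_t:=\mathrm{Law}(Y_t\mid Y_0=x_0)$ solves the very same integro-differential equation with the same initial condition.

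\emph{Step 3 (uniqueness and conclusion).} It remains to show that this forward equation admits a unique measure-valued solution, which I expect to be the only genuinely delicate point; I would argue it exactly as in the uniqueness part of the proof of Corollary \ref{EDP1}, namely by testing, for each compact $K\subseteq E$, against functions $\psi\in\mathcal{C}^2_b(E,\R)$ vanishing off $K$ with $\|A\psi\|_\infty\le1$ (the local boundedness of $r$, $m$, of the maps $F_j^{(k)}$ and of $V$, $1/V$, $\nabla V$ makes $\|A\psi\|_\infty$ controllable on $K$), deriving a bound $\|\mu_t-\mathbb{Q}_t\|_K\le C_K\int_0^t\|\mu_s-\mathbb{Q}_s\|_K\,ds$ and invoking Gronwall's lemma, then letting $K\uparrow E$ via non-explosion and dominated convergence. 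This yields $\mu_t=\mathbb{Q}_t$ as measures on $E$ for every $t\ge0$; since $\mathcal{C}^2_b(E,\R)$ is measure-determining, the equality extends to all bounded measurable test functions, and then to non-negative measurable ones by monotone convergence. In particular, for every $t\ge0$ and every non-negative measurable $f$ on $E\times\R_+$,
\[
\frac{e^{-\lambda_0 t}}{V(x_0)}\,\E\Big[\sum_{u\in\mathcal{V}_t}V(X^u_t)\,f(X^u_t,t)\Big]=\E[f(Y_t,t)\mid Y_0=x_0],
\]
and taking $f\equiv1$ gives $e^{-\lambda_0 t}\E[\sum_{u\in\mathcal{V}_t}V(X^u_t)]/V(x_0)=1$; dividing the two identities produces \eqref{eq:WMTO}, which is what had to be shown.
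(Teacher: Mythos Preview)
Your proof is correct and follows essentially the same route as the paper: define the renormalised $V$-weighted mean measure $\mu_t=e^{-\lambda_0 t}\mathbf{z}_t(V\cdot)/V(x_0)$, use the algebraic identity $\mathcal{G}(V\psi)-\psi\,\mathcal{G}V=V\,A\psi$ together with Corollary~\ref{EDP1} to show that $\mu_t$ satisfies the forward equation of $A$, identify it with the law of $Y_t$ via Dynkin, and conclude by the uniqueness argument of Corollary~\ref{EDP1}. The only cosmetic difference is that the paper works directly with time-dependent test functions (carrying a $\partial_t f$ term), whereas you freeze $t$ and treat $f(\cdot,t)$ as a static test function; both are fine for the stated formula.
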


\begin{proof}
If $\gamma_t : f \mapsto \mathbf{z}_t(f \times V) e^{- \lambda_0 t} V(x_0)^{-1}$ then for all $t\geq 0$ and smooth $f$ we have 
$$
\partial_t \gamma_t (f) = \mathbf{z}_t ( \mathcal{G}(V f) + V \partial_t f - f \mathcal{G} V  )e^{- \lambda_0 t} V(x_0)^{-1} =\gamma_t ( A f + \partial_t f).
$$
Now, by Dynkin formula, the right hand side of \eqref{eq:WMTO} verifies the same equation. The uniqueness can be proved as in the corollary \ref{EDP1}.
\end{proof}

\begin{Rq}[The first characteristic can be random]
If $\mathbf{Z}_0=\delta_{X_0^\emptyset}$, where $X_0^\emptyset$ is random and distributed according to a probability measure $\mu$, then \eqref{eq:WMTO} holds, where $Y$ starts form $Y_0$ which is distributed according to $\mu$.
\end{Rq}

\begin{Rq}[Schr\"{o}dinger operator and $h-$transform]
$\mathcal{G}$ is not a Markov generator. As we have, for all $f$ smooth enough,
$$
\mathcal{G} f = B f - r(m-1) f,
$$
where $B$ is a Markov generator and $\mathcal{G}$ a so-called a Schr\"{o}dinger operator. Its study is connected to the Feynman-Kac formula. The key point of our weighted many-to-one formula is an $h-$transform (Girsanov type transformation) of the Feynman-Kac semigroup as in \cite{P95}. This transformation is usual in the superprocesses study \cite{EW06}.
\end{Rq}

\begin{Rq}[Galton-Watson tree and Malthus parameter]
\label{Malthus}
If $r$ and $p$ are constant, then $V \equiv 1$ is an eigenvector with respect to the eigenvalue $\lambda_0=r (m-1)$, where $m= \sum_{k\geq0} k p_k$ denotes the mean offspring number. So, $\mathbf{Z}_t (V) = N_t$ and the population grows exponentially. This result is already know for $N_t$. It is a continuous branching process \cite{branch,GW}. Furthermore, since Thomas Malthus (1766-1834) has introduced the following simple model to describe the population evolution:
$$
\partial_t N_t = \text{ birth} - \text{death} = b N_t - d N_t = \lambda_0 N_t \implies N_t = e^{\lambda_0 t},
$$ 
in biology and genetic population study, $\lambda_0$ is sometimes called the Malthus parameter.
\end{Rq}

\begin{Rq}[Many eigenelements are possible!]
\label{rq:plusieurvp}
In the previous lemmas, $\lambda_0$ was not required to be the first eigenvalue. So, it is possible to have different eigenelements and auxiliary processes. Consider the example of \cite{Feller}, where some eigenelements are explicit; that is:
$$
\forall x > 0,  \ Gf(x) =  a x f'(x) +  b x f''(x),
$$
where $a,b$ are two non-negative numbers. We also consider that $p_2=2$ and for all $j\in \{1,2\}$,
$$
 \E[f(F^{(2)}_j (x,\Theta))] = \E\left[f(H x)\right],
$$
where $H$ is a symmetric random variable on $[0,1]$ i.e. $H\egalloi 1-H$. This example models cell division with parasite infection. In this case,
$$
\mathcal{G} f(x) = a x f'(x) +  b x f''(x) + r(x) \left( 2\E[f(Hx)] - f(x) \right),
$$
where $a$ is an eigenvalue of $\mathcal{G}$ and $V(x)=x$ is its eigenvector. So, we should have
$$
\E\left[\sum_{u \in \mathcal{V}_t} X^u_t f(X^u_t)\right] = \E[f(Y_t)]e^{at}x_0,
$$
where $Y$ is a Markov process, generated by
$$
G_Y f(x) = \left( a x + 2 b\right) f' (x) + b x f''(x) + r(x) \left( 2 \E[H f(H x)] - f(x)\right).
$$
We can see a bias in the drift terms and jumps mechanism which is not observed in \cite{GW,Feller}. When $r$ is affine, we obtain a second formula. Indeed, if $r(x)=c x + d$, with $c \geq 0$ and $d > a$ (or $d > 0$ and $c=0$) then $V_1(x) = x (c/(d-a)) + 1$ is an eigenvector with respect to the eigenvalue $\lambda_1= d $ ($\Rightarrow \lambda_1 > \lambda_0=a$). Thus, we should also write
$$
\E\left[\sum_{u \in \mathcal{V}_t} f(X^u_t)\right] e^{- d t} =  \ \E\left[\frac{f(U_t)}{ \tau U_t + 1 }\right](\tau x_0 + 1),
$$
where $\tau = \frac{c}{d -a}$ and $U$ is generated by 
$$
G_U f(x)= \left(ax + \frac{2 b x \tau}{\tau x +1 } \right)  f'(x) + b x f''(x) + \frac{r(x)(\tau x + 2)}{ \tau x + 1} \left( \frac{2\E[ ( \tau H x + 1) f( H x )]}{ \tau x + 2} - f(x) \right).
$$
\end{Rq}

\subsection{Many-to-one formulas}
In order to compute our limit theorem, we need to control the second moment. As in \cite{GW}, we begin by describing the population over the whole tree. Then we give a many-to-one formula for forks. Let $\mathcal{T} $ be the random set representing cells that have lived at a certain moment. It is defined by
$$
\mathcal{T} = \lbrace u \in \mathcal{U} \ | \ \exists t>0 , u \in \mathcal{V}_t \rbrace.
$$ 
Lemmas \ref{3.5} and \ref{3.9}, that follow, are respectively the generalisation of \cite[proposition 3.5]{GW} and \cite[proposition 3.9]{GW}.
\begin{lem}[Many-to-one formula over the whole tree]
\label{3.5}
Under Assumption \ref{hyp:vp}, if $\mathbf{Z}_0=\delta_{x_0}$, where $x_0 \in E$, then for any non-negative measurable function $f : E \times \R_+ \rightarrow \R$, we have
$$
\E\left[\sum_{u \in \mathcal{T}} f\left(X^u_{\beta(u)-},\beta(u)\right)\right]= \int_0^{+ \infty} \E\left[ f(Y_s,s) \ \frac{r(Y_s)}{V(Y_s)}  \right] V(x_0)e^{\lambda_0 s} ds
$$
\end{lem}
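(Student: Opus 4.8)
The plan is to reduce the sum over the whole tree $\mathcal{T}$ to a statement about the auxiliary process $Y$ by the same device used for the weighted many-to-one formula (Lemma \ref{lem:WMTO}), namely identifying a measure-valued evolution equation satisfied by both sides and invoking uniqueness. First I would introduce the functional $\mu_t(f) = \E[\sum_{u\in\mathcal{T},\,\beta(u)\le t} f(X^u_{\beta(u)-},\beta(u))]$, i.e. the expected contribution of branching events that have occurred by time $t$, so that the left-hand side of the lemma is $\lim_{t\to\infty}\mu_t$. Using the stochastic differential representation of $\mathbf{Z}$ from Section \ref{sect:prelim} — more precisely the Poisson point process $\rho$ governing the jumps, whose compensator on the event $\{u\in\mathcal{V}_{s-},\ l\le r(X^u_{s-})\}$ produces a factor $r(X^u_{s-})\,ds$ — one sees that
$$
\mu_t(f) = \int_0^t \E\!\left[\sum_{u\in\mathcal{V}_s} r(X^u_s)\, f(X^u_s,s)\right] ds = \int_0^t \mathbf{z}_s\big( r\, f(\cdot,s)\big)\, ds .
$$
This is just the statement that each living cell contributes a branching event at instantaneous rate $r$; it should follow from Dynkin's formula applied to the point-process term exactly as Lemma \ref{semimart} is proved, or directly from Corollary \ref{EDP1} by testing against a time-dependent function and isolating the death term.

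Next I would rewrite $\mathbf{z}_s(r\,f(\cdot,s))$ in terms of the auxiliary process. Set $g_s(x) = f(x,s)\, r(x)/V(x)$, a non-negative function on $E\times\R_+$. Then $r(x) f(x,s) = V(x)\, g_s(x)$, so $\mathbf{z}_s(r f(\cdot,s)) = \mathbf{z}_s(V g_s) = \E[\sum_{u\in\mathcal{V}_s} V(X^u_s) g_s(X^u_s)]$. By the weighted many-to-one formula (Lemma \ref{lem:WMTO}), with its normalisation $\E[\sum_{u\in\mathcal{V}_s}V(X^u_s)] = \mathbf{z}_s(V) = V(x_0) e^{\lambda_0 s}$ established in Lemma \ref{lem:Vmart}, this equals
$$
V(x_0)\, e^{\lambda_0 s}\, \E\big[ g_s(Y_s)\ \big|\ Y_0 = x_0\big] = V(x_0)\, e^{\lambda_0 s}\, \E\!\left[ f(Y_s,s)\, \frac{r(Y_s)}{V(Y_s)}\right].
$$
Substituting into the integral formula for $\mu_t$ and letting $t\to\infty$ gives exactly the asserted identity, the interchange of limit and expectation being justified by monotone convergence since $f\ge 0$.

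The main obstacle is the first step: justifying rigorously that $\mu_t(f) = \int_0^t \mathbf{z}_s(r f(\cdot,s))\,ds$ for a general non-negative measurable $f$, not merely for bounded smooth $f$ in the domain $\mathcal{D}_t(G)$. For smooth bounded $f$ this is a direct consequence of the semimartingale decomposition (take expectations, the martingale part vanishes, and read off the death term). For general measurable non-negative $f$ one argues by a monotone-class / monotone-convergence argument, approximating $f$ from below by bounded functions and, if necessary, localising in time to handle functions that are not bounded in the time variable; the non-explosion Assumption \ref{hyp:nonexplo} guarantees $N_s<\infty$ a.s., which keeps the sums finite, though one must be a little careful since $\E[N_s]$ need not be finite in general — here Lemma \ref{lem:Vmart} saves us because $V>0$ lets us work with the finite quantity $\mathbf{z}_s(V) = V(x_0)e^{\lambda_0 s}$ rather than with $\mathbf{z}_s(1)$. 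A secondary point worth a line is that the auxiliary process $Y$ is indeed well-defined as a Markov process generated by $A=M+J$ under Assumption \ref{hyp:vp}, which is implicit in Lemma \ref{lem:WMTO} and which we simply cite.
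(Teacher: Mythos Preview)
Your proposal is correct and follows the same two-step strategy as the paper: first reduce the tree sum to $\int_0^\infty \mathbf{z}_s\big(r\,f(\cdot,s)\big)\,ds$, then apply the weighted many-to-one formula (Lemma \ref{lem:WMTO}) with $g_s=f(\cdot,s)\,r/V$. The only cosmetic difference is in how the first reduction is justified: you invoke the compensator of the Poisson point process $\rho$, whereas the paper carries out a direct cell-by-cell computation, conditioning on the trajectory of each $u$ and integrating against the explicit conditional density $r(X^u_\tau)\exp\big(-\int_{\alpha(u)}^\tau r(X^u_t)\,dt\big)$ of $\beta(u)$ to obtain $\E\big[\mathbf{1}_{\{u\in\mathcal{T}\}}f(X^u_{\beta(u)-},\beta(u))\big]=\E\big[\int_0^\infty \mathbf{1}_{\{u\in\mathcal{V}_s\}}f(X^u_s,s)\,r(X^u_s)\,ds\big]$ before summing over $u$.
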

\begin{proof}
First we have, for all $u\in \mathcal{U}$,
$$
\E\left[ \1_{ \{ u \in \mathcal{T}\}} f\left(X^u_{\beta(u)-},\beta(u)\right)\right] = \E\left[ \1_{ \{ u \in \mathcal{T} \}} \int_{\alpha(u)}^{\beta(u)} f(X^u_s,s) r(X^u_s) ds \right]
$$
because
\begin{align*}
&\E\left[\1_{\{ u \in \mathcal{T} \} } \int_{\alpha(u)}^{\beta(u)} f(X^u_s,s) r(X^u_s) ds\right]\\
=&\E\left[\1_{\{u \in \mathcal{T} \} } \int_0^{+\infty} \int_{\alpha(u)}^\tau  f(X^u_s,s) r(X^u_s) ds \ r (X^u_\tau) e^{- \int_{\alpha(u)}^\tau r(X^u_t) dt} \ d\tau\right]\\
=&\E\left[\1_{\{u \in \mathcal{T} \} } \int_{\alpha(u)}^{+\infty} \int_s^{+\infty} r (X^u_\tau) e^{- \int_{\alpha(u)}^\tau r(X^u_t) dt} \ d\tau \  f(X^u_s,s)r(X^u_s) ds\right]\\
=&\E\left[\1_{\{u \in \mathcal{T}\}} \int_{\alpha(u)}^{+\infty} e^{- \int_{\alpha(u)}^s r(X^u_t) dt}   f(X^u_s,s) r(X^u_s) ds\right]\\
=&\E\left[ \1_{\{u \in \mathcal{T}\}} f\left(X^u_{\beta(u)-},\beta(u)\right)\right].
\end{align*}
Thus,
\begin{align*}
\E\left[\1_{\{u \in \mathcal{T} \}} f\left(X^u_{\beta(u)-},\beta(u)\right)\right]
&= \E\left[ \int_0^{+ \infty} \1_{\{u \in V_s \}} f(X^u_s,s) r(X^u_s) ds \right], \\ 
\end{align*}
and finally,
\begin{align*}
\E\left[\sum_{u \in \mathcal{T}} f\left(X^u_{\beta(u)-},\beta(u)\right)\right]
&= \int_0^{+ \infty} \E\left[  \sum_{u \in V_s} f(X^u_s,s) r(X^u_s)  \right] ds \\
&= \int_0^{+ \infty}  \ \E\left[ f(Y_s,s) \frac{r(Y_s)}{V(Y_s)}  \right] \ V(x_0)e^{\lambda_0 s} ds.
\end{align*}

\end{proof}

If we set $g(x,s)= f(x,s)/V(x)$ then we have:
$$
\E\left[ \sum_{u \in \mathcal{T}} g \left( X^u_{\beta(u)-}, \beta(u)\right) \ V\left(X^u_{\beta(u)-}\right) \right] = \int_0^{+ \infty} \E \left[ g( Y_s,s ) r(Y_s) \right] \times \E [\mathbf{Z}_s(V)] \ ds.
$$
This equality means that adding the contributions over all the individuals corresponds to integrating the contribution of the auxiliary process over the average number of living individuals at time $s$. Let $(P_t)_{t \geq 0}$ be the semigroup of the auxiliary process; it is defined, for any continuous and bounded $f$, by
$$
P_t f (x)= \E[f(Y_t) \ | \ Y_0=x]
$$

\begin{lem}[Many-to-one formula for forks]
\label{3.9}
Under Assumption \ref{hyp:vp}, if $\mathbf{Z}_0=\delta_{x_0}$, where $x_0 \in E$, then for all non-negative and measurable function $f,g$ on $E$, we have
\begin{align*}
&\E\left[\sum_{u,v \in \mathcal{V}_t, u \neq v} f(X^u_t) V(X^u_t) g(X^v_t) V(X^v_t) \right] \\
= &\E[\mathbf{Z}_t(V)]^2 \int_0^t \frac{1}{\E[\mathbf{Z}_s(V)]} \E\left[ J_2(VP_{t - s}f,VP_{t - s}g)(Y_s) \ \frac{r(Y_s)}{V(Y_s)} \right] ds 
\end{align*}
where $J_2$ is defined by
$$
J_2(f,g)(x)=  \int_0^1 \sum_{a \neq b} \sum_{k  \geq \max(a,b)} p_k(x) f\left( F^{(k)}_a (x, \theta )\right)  g\left( F^{(k)}_b(x, \theta ) \right) \ d\theta.
$$

\end{lem}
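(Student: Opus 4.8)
The plan is to run the classical ``many-to-two'' computation in the weighted setting: decompose the sum over ordered pairs of distinct individuals alive at time $t$ according to their most recent common ancestor $w$, use the branching property at the division date $\beta(w)$ to split the contribution of the two disjoint subpopulations into a product, evaluate each factor by the weighted many-to-one formula of Lemma \ref{lem:WMTO}, average over the offspring data to produce the operator $J_2$, and finally collapse the sum over $w$ with Lemma \ref{3.5}. Concretely, I would first fix the combinatorics. For $u\neq v$ in $\mathcal{V}_t$ there is a unique $w\in\mathcal{T}$ and a unique \emph{ordered} pair of distinct indices $a\neq b$ such that $u$ equals or descends from $wa$ and $v$ equals or descends from $wb$, with $w$ the last common ancestor and $\beta(w)$ the date it divided. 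Writing $wa\preceq u$ for this relation, this yields the exact identity (all terms being non-negative, the rearrangement is licit)
$$
\sum_{\substack{u,v\in\mathcal{V}_t\\ u\neq v}} f(X^u_t)V(X^u_t)\,g(X^v_t)V(X^v_t)
=\sum_{w\in\mathcal{T}}\ \sum_{a\neq b}\ \sum_{\substack{u\in\mathcal{V}_t\\ wa\preceq u}}\ \sum_{\substack{v\in\mathcal{V}_t\\ wb\preceq v}} f(X^u_t)V(X^u_t)\,g(X^v_t)V(X^v_t).
$$

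Next I would condition at the division of $w$. Working on $\{w\in\mathcal{T}\}$, put $s=\beta(w)$, $x=X^w_{s-}$, and let $K$ (of law $(p_k(x))_k$) and $\Theta$ (uniform on $[0,1]$) be the offspring number and the label used at that division, so $X^{wa}_s=F^{(K)}_a(x,\Theta)$ and $X^{wb}_s=F^{(K)}_b(x,\Theta)$. By the branching property, conditionally on the trajectory up to $s$ enriched with $(K,\Theta)$, the subpopulations issued from $wa$ and from $wb$ are independent, each a fresh copy of the whole system started from the corresponding point and run for the residual time $t-s$. Applying Lemma \ref{lem:WMTO} to each of them, together with the identity $\E[\mathbf{Z}_{t-s}(V)]=V(\cdot)\,e^{\lambda_0(t-s)}$ for a system started at one individual (as in the proof of Lemma \ref{lem:Vmart}), the inner double sum has conditional expectation
$$
\1_{\{s\le t\}}\,e^{\lambda_0(t-s)}V\big(F^{(K)}_a(x,\Theta)\big)P_{t-s}f\big(F^{(K)}_a(x,\Theta)\big)\times e^{\lambda_0(t-s)}V\big(F^{(K)}_b(x,\Theta)\big)P_{t-s}g\big(F^{(K)}_b(x,\Theta)\big).
$$
Summing over $a\neq b$ and averaging over $(K,\Theta)$ given $X^w_{s-}=x$ reproduces exactly $J_2$, so that, setting $h(x,s):=\1_{\{s\le t\}}\,e^{2\lambda_0(t-s)}\,J_2\big(VP_{t-s}f,\,VP_{t-s}g\big)(x)$,
$$
\E\Big[\1_{\{w\in\mathcal{T}\}}\sum_{a\neq b}\ \sum_{\substack{u\in\mathcal{V}_t\\ wa\preceq u}}\ \sum_{\substack{v\in\mathcal{V}_t\\ wb\preceq v}} f(X^u_t)V(X^u_t)g(X^v_t)V(X^v_t)\Big]=\E\big[h\big(X^w_{\beta(w)-},\beta(w)\big)\big].
$$

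Finally I would sum over $w\in\mathcal{T}$ (monotone convergence) and combine with Step 1 to get
$$
\E\Big[\sum_{\substack{u,v\in\mathcal{V}_t\\ u\neq v}} f(X^u_t)V(X^u_t)g(X^v_t)V(X^v_t)\Big]=\E\Big[\sum_{w\in\mathcal{T}} h\big(X^w_{\beta(w)-},\beta(w)\big)\Big];
$$
since $h$ is non-negative and measurable on $E\times\R_+$, Lemma \ref{3.5} rewrites the right-hand side as
$$
\int_0^{+\infty}\E\Big[h(Y_s,s)\,\tfrac{r(Y_s)}{V(Y_s)}\Big]V(x_0)e^{\lambda_0 s}\,ds=\int_0^t \E\Big[J_2\big(VP_{t-s}f,VP_{t-s}g\big)(Y_s)\,\tfrac{r(Y_s)}{V(Y_s)}\Big]\,V(x_0)e^{\lambda_0 s}e^{2\lambda_0(t-s)}\,ds.
$$
It remains to rewrite the prefactor: since $\E[\mathbf{Z}_t(V)]=V(x_0)e^{\lambda_0 t}$ and $\E[\mathbf{Z}_s(V)]=V(x_0)e^{\lambda_0 s}$, one has $V(x_0)e^{\lambda_0 s}e^{2\lambda_0(t-s)}=\E[\mathbf{Z}_t(V)]^2/\E[\mathbf{Z}_s(V)]$, which gives precisely the announced formula.

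The step I expect to be the main obstacle is the branching argument of the second paragraph: making fully rigorous that, conditionally on the trajectory up to the \emph{random} division time $\beta(w)$ and on the offspring data $(K,\Theta)$, the populations descending from two distinct children are genuinely independent copies of the whole process, started at $F^{(K)}_a(X^w_{\beta(w)-},\Theta)$, resp.\ $F^{(K)}_b(X^w_{\beta(w)-},\Theta)$, and run for time $t-\beta(w)$, so that Lemma \ref{lem:WMTO} can be invoked on each of them separately; and, relatedly, justifying the interchange of the (a priori infinitely many) nested sums with the expectations, which is harmless only because $f$, $g$, $V$ are non-negative — this is exactly why the statement is restricted to that case.
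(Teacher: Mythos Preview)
Your proof is correct and follows essentially the same route as the paper: decompose over the most recent common ancestor $w$, use the branching/strong Markov property at $\beta(w)$ together with Lemma~\ref{lem:WMTO} to factor the contribution of the two disjoint subtrees into $e^{2\lambda_0(t-\beta(w))}J_2(VP_{t-\beta(w)}f,VP_{t-\beta(w)}g)(X^w_{\beta(w)-})$, and then apply Lemma~\ref{3.5} to the sum over $w\in\mathcal{T}$. Your write-up is in fact slightly more explicit than the paper's about the combinatorics of the MRCA decomposition, the role of non-negativity in justifying the interchanges, and the final identification $V(x_0)e^{\lambda_0 s}e^{2\lambda_0(t-s)}=\E[\mathbf{Z}_t(V)]^2/\E[\mathbf{Z}_s(V)]$.
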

The operator $J_2$ describes the starting positions of two siblings picked at random.
\begin{proof}
Let $u,v \in \mathcal{V}_t$ be such that $u \neq v$, then there exists $(w,\tilde{u},\tilde{v}) \in \mathcal{U}^3$ and $a,b \in \N^{*}$, $a\neq b$ such that $u=wa\tilde{u}$ and $v=wb\tilde{v}$. The cell $w$ is sometimes called the most recent common ancestor (MRCA). We have
\begin{align*}
&\E\left[\sum_{u,v \in \mathcal{V}_t, u \neq v} f(X^u_t) V(X^u_t) g(X^v_t) V(X^v_t) \right] \\
= &\sum_{ w \in \mathcal{U}} \sum_{a \neq b} \sum_{\tilde{u},\tilde{v} \in \mathcal{U}}  \E\left[ \1_{\{ u \in \mathcal{V}_t\}} f(X^{u}_t) V(X^{u}_t) \1_{\{v \in \mathcal{V}_t \}} g(X^{v}_t) V(X^{v}_t) \right],
\end{align*}

where $u=wa\tilde{u}$ and $v= wb\tilde{v}$ . Let $ \mathcal{F}_t = \sigma \{\mathbf{Z}_s \ | \ s \leq t \} $. By the conditional independence between descendants, we have

\begin{align*}
&\E\left[\sum_{u,v \in \mathcal{V}_t, u \neq v} f(X^u_t) V(X^u_t) g(X^v_t) V(X^v_t) \right]\\
=&\sum_{ w \in \mathcal{U}} \sum_{a \neq b} \E\left[ \E\left[ \sum_{\tilde{u} \in \mathcal{U}} \1_{\{u \in \mathcal{V}_t\}} f(X^u_t) V(X^u_t) | \mathcal{F}_{\beta(w)}\right] \E\left[ \sum_{\tilde{v} \in \mathcal{U}} \1_{\{ v \in \mathcal{V}_t\}} g(X^v_t) V(X^v_t) | \mathcal{F}_{\beta(w)}\right] \right].
\end{align*}

Therefore, as $\beta (w)$ is a stopping time, then using the strong Markov property and \eqref{eq:WMTO}, we have

\begin{align*}
&\E\left[\sum_{u,v \in \mathcal{V}_t, u \neq v} f(X^u_t) V(X^u_t) g(X^v_t) V(X^v_t) \right]\\
=&\sum_{ w \in \mathcal{U}} \sum_{a \neq b} \E[ \1_{\{wa, wb \in \mathcal{T}, t \geq \beta(w)\}} P_{t - \beta(w)} f( X^{wa}_{\beta(w)}) V(X^{wa}_{\beta(w)})\\
 &P_{t - \beta(w)}g ( X^{wb}_{\beta(w)}) V(X^{wb}_{\beta(w)}) e^{ 2 \lambda_0 (t - \beta(w))}] \\
=&\E\left[ \sum_{w \in \mathcal{T}} \1_{\{t \geq \beta(w)\}} J_2( VP_{t - \beta(w)}f, VP_{t - \beta(w)}g)(X^{w}_{\beta(w)-})  \ e^{ 2 \lambda_0 (t - \beta(w))} \right]\\
=&e^{ 2 \lambda_0 t } V(x_0) \int_0^t \E\left[ J_2(VP_{t - s}f,VP_{t - s}g)(Y_s) \ \frac{r(Y_s)}{V(Y_s)}  \right]e^{- \lambda_0 s} ds.
\end{align*}
\end{proof}

\subsection{Proof of Theorem \ref{thintro}}

In this section, we give the main limit theorem which implies Theorem \ref{thintro}. 

\begin{theo}[General Condition for the convergence of the empirical measure]
\label{Gencv}
Under Assumption \ref{hyp:vp}, if $f$ is a measurable function defined on $E$ and $\mu$ a probability measure such that there exists a probability measure $\pi$, two constants $\alpha < \lambda_0$ and $C>0$, and a measurable function $h$ such that
\begin{enumerate}
\item $\pi( |f|)< + \infty$ and $\forall x \in E, \lim_{ t \rightarrow + \infty} P_t f(x) = \pi (f)$, \label{Pcv} \\
\item $\mu(V) < + \infty$ and $\mu P_t (f^2 \times V) \leq  C e^{\alpha t}$,\label{Pborne} \\
\item $P_t |f| \leq h$ and $\mu P_s \left( J_2(V h,V h) \ \frac{r}{V} \right) \leq C e^{\alpha t}$,\label{eq:Domination}
\end{enumerate}
and $\mathbf{Z}_0 = \delta_{X_0^\emptyset}$, where $X_0^\emptyset \sim \mu$, then we have
$$
\lim_{t \rightarrow +\infty } \frac{1}{\E[\mathbf{Z}_t(V)]}{\sum_{u \in \mathcal{V}_t}} f(X^u_t) V(X^u_t) =  W  \times \pi(f),
$$
where the convergence holds in probability. If furthermore $(\mathbf{Z}_t(V)e^{-\lambda_0 t})_{t\geq0}$ is bounded in $L^2$ then the convergence holds in $L^2$.
\end{theo}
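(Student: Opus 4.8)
The plan is to fix an intermediate time $0<s<t$ and to decompose the quantity $N^f_t := \E[\mathbf{Z}_t(V)]^{-1}\sum_{u\in\mathcal{V}_t}f(X^u_t)V(X^u_t)$ as
\[
N^f_t - W\pi(f) = \bigl(N^f_t - \E[N^f_t\mid\mathcal{F}_s]\bigr) + \bigl(\E[N^f_t\mid\mathcal{F}_s] - M_s\pi(f)\bigr) + \bigl(M_s-W\bigr)\pi(f),
\]
where $M_s := \mathbf{Z}_s(V)e^{-\lambda_0 s}/\mathbf{z}_0(V)$ is the mean-one martingale of Lemma \ref{lem:Vmart} (so that $W$, as in the statement, equals $\lim_{s\to\infty}M_s$ almost surely and $\E[\mathbf{Z}_t(V)]=\mathbf{z}_0(V)e^{\lambda_0 t}$) and $\mathcal{F}_s=\sigma(\mathbf{Z}_r,\,r\le s)$. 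The third term tends to $0$ almost surely as $s\to\infty$ by Lemma \ref{lem:Vmart}. For the second term, conditioning on $\mathcal{F}_s$ and applying the weighted many-to-one formula (Lemma \ref{lem:WMTO}) to the subtree born from each $v\in\mathcal{V}_s$ — these subtrees being conditionally independent copies of the process started at $X^v_s$, run for time $t-s$ — yields $\E[N^f_t\mid\mathcal{F}_s]=\mathbf{z}_0(V)^{-1}e^{-\lambda_0 s}\sum_{v\in\mathcal{V}_s}V(X^v_s)P_{t-s}f(X^v_s)$. Since $\mathcal{V}_s$ is almost surely finite (Assumption \ref{hyp:nonexplo}) and $P_\tau f(x)\to\pi(f)$ for every $x$ by hypothesis~(1), for each fixed $s$ this converges almost surely to $\pi(f)M_s$ as $t\to\infty$. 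Thus everything reduces to bounding $\E\bigl[(N^f_t-\E[N^f_t\mid\mathcal{F}_s])^2\bigr]$ uniformly in $t\ge s$ and showing it is small for $s$ large.

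For this, I would use that conditionally on $\mathcal{F}_s$ the blocks $\sum_{u\succeq v}f(X^u_t)V(X^u_t)$ (the sum running over descendants $u$ of $v$) are independent over $v\in\mathcal{V}_s$, so that $\E\bigl[(N^f_t-\E[N^f_t\mid\mathcal{F}_s])^2\bigr]\le \E[\mathbf{Z}_t(V)]^{-2}\E\bigl[\sum_{v\in\mathcal{V}_s}\E[(\sum_{u\succeq v}|f|(X^u_t)V(X^u_t))^2\mid\mathcal{F}_s]\bigr]$. Expand the inner square into a diagonal part $\sum_{u\succeq v}(f^2V)(X^u_t)V(X^u_t)$ and an off-diagonal "fork" part, evaluate the former by Lemma \ref{lem:WMTO} and the latter by Lemma \ref{3.9}, both applied to the subtree started at $X^v_s$ over time $t-s$; then sum over $v\in\mathcal{V}_s$, take expectation, and use Lemma \ref{lem:WMTO} once more, this time between times $0$ and $s$, to turn $\E[\sum_{v\in\mathcal{V}_s}V(X^v_s)\phi(X^v_s)]$ into $\mathbf{z}_0(V)e^{\lambda_0 s}\,\mu P_s\phi$. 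In the fork part, bound $P_{t-s-r}|f|\le h$ and use the monotonicity of $J_2$ in its arguments (legitimate since $V>0$) to replace $J_2(VP_{t-s-r}|f|,VP_{t-s-r}|f|)$ by $J_2(Vh,Vh)$. The diagonal part then involves $\mu P_t(f^2V)$ and the fork part $\mu P_{s+r}(J_2(Vh,Vh)\,r/V)$; bounding these by $Ce^{\alpha t}$ and $Ce^{\alpha(s+r)}$ via hypotheses~(2) and~(3), and using $\int_0^{t-s}e^{(\alpha-\lambda_0)r}\,dr\le(\lambda_0-\alpha)^{-1}$, all powers of $e^{\lambda_0}$ cancel against $\E[\mathbf{Z}_t(V)]^{-2}$ and one is left with
\[
\E\bigl[(N^f_t-\E[N^f_t\mid\mathcal{F}_s])^2\bigr]\le \frac{C}{\mathbf{z}_0(V)}e^{(\alpha-\lambda_0)t}+\frac{C}{(\lambda_0-\alpha)\mathbf{z}_0(V)}e^{(\alpha-\lambda_0)s}.
\]

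Consequently $\limsup_{t\to\infty}\E\bigl[(N^f_t-\E[N^f_t\mid\mathcal{F}_s])^2\bigr]\le C'e^{(\alpha-\lambda_0)s}\to0$ as $s\to\infty$. Given $\varepsilon>0$, one picks $s$ so that this $\limsup$ is below $\varepsilon^3$ and $\p(|M_s-W|\,|\pi(f)|>\varepsilon)<\varepsilon$; then for all large $t$ Chebyshev applied to the first term of the decomposition together with the almost-sure convergence of the second term gives $\p(|N^f_t-W\pi(f)|>3\varepsilon)<3\varepsilon$, whence $N^f_t\to W\pi(f)$ in probability. The delicate point of the whole argument is precisely the fork estimate: one must notice that after the two successive applications of Lemma \ref{lem:WMTO} the functional $J_2(Vh,Vh)\,r/V$ is evaluated along the auxiliary process at time $s+r$, so that hypothesis~(3) supplies the factor $e^{\alpha s}$ which, after the $e^{-\lambda_0 r}$-integration and cancellation against $\E[\mathbf{Z}_t(V)]^{-2}$, produces the crucial decay $e^{(\alpha-\lambda_0)s}$; checking that $J_2$-monotonicity really does let hypothesis~(3) dominate $J_2(VP_{t-s-r}f,VP_{t-s-r}f)$ is the other place where care is needed.

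Finally, if $(M_t)$ is bounded in $L^2$, the same moment computations taken with $s=0$ show $\sup_t\E[(N^f_t)^2]<\infty$, and letting $t\to\infty$ under the integral sign by dominated convergence (the integrand being dominated, uniformly in $t$, by $e^{-\lambda_0 r}\E[J_2(Vh,Vh)(Y_r)\,r(Y_r)/V(Y_r)]$, integrable on $[0,\infty)$ by hypothesis~(3) since $\alpha<\lambda_0$) together with $P_\tau f\to\pi(f)$ gives $\E[(N^f_t)^2]\to\pi(f)^2\,\mathbf{z}_0(V)^{-1}\int_0^\infty e^{-\lambda_0 r}\E[J_2(V,V)(Y_r)\,r(Y_r)/V(Y_r)]\,dr$. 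Running the identical computation with $f\equiv1$ (using $P_\tau 1\equiv1$, which holds by Lemma \ref{lem:WMTO}) identifies this constant as $\lim_t\E[M_t^2]=\E[W^2]$, the limit existing by $L^2$-boundedness of the martingale. Hence $\E[(N^f_t)^2]\to\E[(W\pi(f))^2]<\infty$, and this together with the convergence in probability already established forces convergence in $L^2$.
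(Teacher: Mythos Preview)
Your argument for convergence in probability is correct but follows a genuinely different route from the paper's. The paper does not condition on an intermediate time: it reduces at once to the case $\pi(f)=0$ via the decomposition
\[
\mathbf{Z}_t(fV)e^{-\lambda_0 t}-W\pi(f)=\mathbf{Z}_t\bigl((f-\pi(f))V\bigr)e^{-\lambda_0 t}+\pi(f)\bigl(M_t-W\bigr),
\]
and then, for $g:=f-\pi(f)$, shows directly that $\E[(N^g_t)^2]\to 0$ by writing this second moment as a diagonal term $A_t$ plus a fork term $B_t$ (Lemmas~\ref{lem:WMTO} and~\ref{3.9}). Because $P_\tau g\to 0$ pointwise, two applications of dominated convergence (the domination being exactly hypothesis~(3)) kill $B_t$, while hypothesis~(2) kills $A_t$. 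Your two-scale conditioning reaches the same conclusion but at the cost of running the many-to-one and fork formulas twice (on $[s,t]$ inside each subtree, then on $[0,s]$ to re-average over $\mathcal{V}_s$) and then optimising in $s$; the centering trick collapses all of this into a single second-moment computation and already delivers $L^2$ convergence of the centred term.

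For the $L^2$ upgrade your argument has a gap. Identifying the limiting integral $\mathbf{z}_0(V)^{-1}\int_0^\infty e^{-\lambda_0 r}\E[J_2(V,V)(Y_r)\,r(Y_r)/V(Y_r)]\,dr$ with $\E[W^2]$ by ``running the identical computation with $f\equiv 1$'' requires the diagonal contribution $A^1_t=e^{-\lambda_0 t}\mathbf{z}_0(V)^{-1}\mu P_t(V)$ to vanish, and this is neither among the stated hypotheses nor a consequence of the $L^2$-boundedness of $(M_t)$: the latter only tells you that the \emph{sum} $A^1_t+B^1_t=\E[M_t^2]$ converges, not that $A^1_t\to 0$ separately. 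The paper sidesteps this: once $N^{f-\pi(f)}_t\to 0$ in $L^2$ and $M_t\to W$ in $L^2$ (automatic for an $L^2$-bounded martingale), the displayed decomposition yields $N^f_t\to W\pi(f)$ in $L^2$ with no further work.
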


Note that the constants and $\pi$ may depend on $f$ and $\mu$! Also note that $\lambda_0$ is not assumed to be the largest eigenvalue. 

\begin{proof}
As in \cite[Theorem 4.2]{GW}, we first prove the convergence for $f$ such that $\pi (f) =0$. We have $\E[\mathbf{Z}_t(V)] = \mu(V)e^{\lambda_0 t}$ and so
$$
\E \left[\left( \frac{1}{\E[\mathbf{Z}_t(V)]}{\sum_{u \in \mathcal{V}_t}} f(X^u_t) V(X^u_t) \right)^2 \right] = \E\left[ \mathbf{Z}_t (f \times V)^2 e^{- 2 \lambda_0 t} \mu(V)^{-2} \right] = A_t + B_t,
$$
where
$$
A_t =  e^{- 2 \lambda_0 t} \mu(V)^{-2} \E\left[\sum_{u \in \mathcal{V}_t} f^2 (X^u_t) V^2 (X^u_t)\right] = e^{- \lambda_0 t}\mu(V)^{-1} \E\left[ f^2 (Y_t) V(Y_t)\right],
$$
and 
\begin{align*}
B_t &= e^{- 2 \lambda_0 t}\mu(V)^{-2} \E\left[\sum_{u,v \in \mathcal{V}_t , \ u \neq v } f (X^u_t) V(X^u_t) f(X^v_t) V(X^v_t)\right] \\
&= \mu(V)^{-1} \int_0^t \E\left[ J_2(V P_{t - s}f,V P_{t - s}f)(Y_s) \ \frac{r(Y_s)}{V(Y_s)}\right] e^{- \lambda_0 s} ds.
\end{align*}
From \eqref{Pborne}, we get $\lim_{t \rightarrow + \infty} A_t =0$. Since $\pi(f)=0$, from \eqref{Pcv}, we get $\lim_{t\rightarrow} P_t f=0$. Then, by \eqref{eq:Domination} and Lebesgue's theorem, we obtain that, for all $s \geq 0$ and $x \in E$, 
$$
 \lim_{ t \rightarrow + \infty } J_2(V P_{t - s}f,V P_{t - s}f)(x) =0.
$$ 
And again by \eqref{eq:Domination} and Lebesgue's theorem, we obtain that $\lim_{t \rightarrow + \infty} B_t =0$. Now, if $\pi(f) \neq0$ then we have
$$
\mathbf{Z}_t (fV) e^{- \lambda_0 t} - W \pi(f) = \mathbf{Z}_t\left( (f - \pi (f)) V \right) e^{- \lambda_0 t} + \pi(f) \left( \mathbf{Z}_t (V)e^{- \lambda_0} -W \right).
$$
Then, as a consequence of to the first part of the proof, the first term of the sum, in the right hand side, converges to 0 in $L^2$. Moreover, the second term converges to 0 in probability thanks to lemma \ref{lem:Vmart}.
\end{proof}
\begin{proof}[Proof of Theorem \ref{thintro}]
If $f= g/V$ then it is a continuous and bounded function. If $h\equiv1$ then all assumptions of the previous theorem hold and we get the first convergence. Now if $V$ is lower bounded, we can use the same argument with $g=1$ and $f=1/V$ which is also a continuous and bounded function. 
\end{proof}

\section{Macroscopic approximation}
\label{sectlargepop}
To prove Theorem \ref{th:Grandpopintro}, we need to use different topologies on $\mathcal{M}(E)$. Let $(\mathcal{M}(E),d_v)$ (resp.  $(\mathcal{M}(E),d_w)$) be the set of finite measure when it is embedded with the vague (resp. weak) topology. These topologies are defined as follow.
$$
\lim_{n \rightarrow + \infty} d_v (X_n, X_\infty) = 0 \iff \forall f \in C_0 ,\ \lim_{n \rightarrow + \infty} \E[f(X_n)] = \E[f(X_\infty)],
$$ 
$$
\lim_{n \rightarrow + \infty} d_w (X_n, X_\infty) = 0 \iff \forall f \in C_b, \ \lim_{n \rightarrow + \infty} \E[f(X_n)] = \E[f(X_\infty)],
$$ 
where $(X_n)_{\neq0}$ is a sequence of $\mathcal{M}(E)$ and $X_\infty \in \mathcal{M}(E)$. Here, $C_0$ is the set of continuous functions which vanish at infinity, and $C_b$ is the set of continuous and bounded functions. Let $\mathbb{D}([0,T],E)$ and $ C([0,T], E)$ be respectively the set of c\`ad-l\`ag functions embedded with the Skohorod topology and continuous functions embedded with the uniform topology \cite{B99}.

\subsection{Proof of Theorem \ref{th:Grandpopintro}}
Let $(\mathbf{Z}^{(n)})_{n\geq1}$ be a sequence of random measure-valued distributed as $\mathbf{Z}$. In this section, we consider the following scaling: $\mathbf{X}^{(n)} = \frac{1}{n} \mathbf{Z}^{(n)}$, and we describe the behaviour of this scaled process when $n$ goes to infinity. 

To understand the behaviour of our model in a large population, we can consider that it starts from a deterministic probability measure $\mathbf{X}_0$, and approach it by the interesting sequence defined by 
$$
 \mathbf{X}^{(n)}_0 = \frac{1}{n} \sum_{k=0}^n  \delta_{Y_k},
$$
where $(Y_k)_{k \geq 1}$ is a sequence of i.i.d. random variable distributed according to $\mathbf{X}_0$. In other words, we set
$$
 \mathbf{Z}^{(n)}_0 = \sum_{k=0}^n \delta_{Y_k}.
$$
Does $\mathbf{X}^{(n)}$ converge? Yes, it converges. Indeed, by the branching property, we have $\mathbf{Z}^{(n)} \egalloi \sum_{k=0}^n \mathbf{Z}^{Y_k}$, where $\mathbf{Z}^{Y_k}_t$ are i.i.d., distributed as $\mathbf{Z}$ and starting from $ \mathbf{Z}^{Y_k}_0 = \delta_{Y_k}$. Henceforth, if $f$ is a continuous and bounded function then the classical law of large number gives
$$
\forall t \geq 0, \ \lim_{n \rightarrow \infty} \mathbf{X}_t^{(n)} (f) = \E\left[\mathbf{Z}^{Y_1}_t (f)\right] a.s.
$$
So by corollary \ref{EDP1}, it implies that $\mathbf{X}^{(n)}$  (pointwise) converges to the solution $(\mu_t)_{t\geq0}$ of the following integro-differential equation:
\begin{align}
\label{equation}
\mu_t (f) 
&= \mu_0 (f) + \int_0^t \mu_s ( G f) \\
&+ \int_E r(x) \sum_{k \geq 0} p_k(x) \int_0^1 \sum_{j=1}^k f(F^{(k)}_j (x,\theta)) d\theta	-f(x) \mu_s (dx) ds. \nonumber
\end{align}

Theorem \ref{th:Grandpopintro} gives a stronger convergence.

\begin{lem}[Semi-martingale decomposition]
\label{semimart2}
If Assumption \ref{hyp:nonexplo}, then for all $f \in \mathcal{D} (G^2)$ and $t \geq 0$,
$$
 \mathbf{X}^{(n)}_t(f) = \mathbf{X}^{(n)}_0(f) + \mathbf{M}^{(n)}_t(f) + \mathbf{V}^{(n)}_t(f),
$$
where
$$
\mathbf{V}^{(n)}_t(f) = \int_0^t \int_E G f(x)  + r (x) \int_0^1 \sum_{k \in \N} \sum_{j=1}^k f(F^{(k)}_j (x,\theta))-f(x)  p_k d\theta \mathbf{X}^{(n)}_s(dx) ds,
$$
and $ \mathbf{M}^{(n)}_t(f) $ is a square-integrable and c\`adl\`ag martingale. Its bracket is defined by
\begin{align*}
 \langle \mathbf{M}^{(n)}(f) \rangle_t 
 &= \frac{1}{n} \int_0^t 2  \mathbf{X}^{(n)}_s (G f^2) - 2 \mathbf{X}^{(n)}_s ( f \times G f ) \\
 &+ \int_E r (x) \int_0^1 \sum_{k \in \N^*} \left(\sum_{j=1}^k f(F^{(k)}_j (x,\theta))-f(x)\right)^2 p_k(x) d\theta \ \mathbf{X}^{(n)}_s(dx) ds. 
 \end{align*}

\end{lem}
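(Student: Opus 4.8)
The plan is to read the claim off the semi-martingale decomposition of the \emph{unscaled} empirical measure established in Lemma \ref{semimart}. Since $\mathbf{Z}^{(n)}$ is distributed as $\mathbf{Z}$ and Assumption \ref{hyp:nonexplo} holds, Lemma \ref{semimart} applies verbatim to $\mathbf{Z}^{(n)}$: for a time-independent $f\in\mathcal{D}(G^2)$ (so that $\partial_s f=0$),
$$
\mathbf{Z}^{(n)}_t(f)=\mathbf{Z}^{(n)}_0(f)+\mathbf{M}_t(f)+\int_0^t\mathbf{Z}^{(n)}_s(\mathcal{G}f)\,ds ,
$$
where $\mathbf{M}(f)$ is a c\`adl\`ag martingale whose bracket $\langle\mathbf{M}(f)\rangle$ is, for $f\in\mathcal{D}(G^2)$, the one displayed in that lemma with $\mathbf{Z}$ replaced by $\mathbf{Z}^{(n)}$. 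Now I divide everything by $n$: setting $\mathbf{X}^{(n)}=\tfrac1n\mathbf{Z}^{(n)}$ and $\mathbf{M}^{(n)}_t(f):=\tfrac1n\mathbf{M}_t(f)$, linearity of integration against a measure gives $\tfrac1n\mathbf{Z}^{(n)}_s(\mathcal{G}f)=\mathbf{X}^{(n)}_s(\mathcal{G}f)$, and expanding $\mathcal{G}f=Gf+r\big(\sum_{k}p_k\sum_{j=1}^k\int_0^1 f(F^{(k)}_j(\cdot,\theta))\,d\theta-f\big)$ yields exactly the announced form of $\mathbf{V}^{(n)}_t(f)$, together with $\mathbf{X}^{(n)}_0(f)=\tfrac1n\mathbf{Z}^{(n)}_0(f)$.

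For the bracket I would invoke the scaling rule $\langle cN\rangle=c^2\langle N\rangle$, valid for a local martingale $N$ and a deterministic constant $c$: hence $\langle\mathbf{M}^{(n)}(f)\rangle_t=\tfrac1{n^2}\langle\mathbf{M}(f)\rangle_t$. Using $\tfrac1{n^2}\mathbf{Z}^{(n)}_s=\tfrac1n\mathbf{X}^{(n)}_s$ inside the formula of Lemma \ref{semimart} turns it into the announced expression for $\langle\mathbf{M}^{(n)}(f)\rangle_t$ (the overall prefactor $\tfrac1n$ being precisely the source of the vanishing of the martingale part in the limit $n\to\infty$). That $\mathbf{M}^{(n)}(f)$ is a genuine square-integrable martingale, not merely a local one, follows because it is a deterministic multiple of $\mathbf{M}(f)$, and $\E[\langle\mathbf{M}(f)\rangle_t]<+\infty$: indeed $f\in\mathcal{D}(G^2)$ makes $G(f^2)$ and $fGf$ bounded, while the jump contribution is bounded, after estimating $\big(\sum_{j=1}^k f(F^{(k)}_j(x,\theta))-f(x)\big)^2\le(k+1)^2\|f\|_\infty^2$, by a constant times $\int_0^t\E[N_s]\,ds$, which is finite under the standing hypotheses of this section (boundedness of $r$ and finite support of the offspring law, as in Theorem \ref{th:Grandpopintro}).

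There is no real obstacle: the whole content is the bookkeeping of the factors of $n$ and the elementary fact that a deterministic multiple of a martingale is again a martingale. The only point deserving attention is the square-integrability discussed above, since under a mere first-moment assumption on the offspring number the bracket need not be integrable; this is why one works in the bounded-rate, bounded-offspring regime in which the lemma is actually used. Alternatively, one may bypass Lemma \ref{semimart} and argue directly from the stochastic differential equation for $\mathbf{Z}$ recalled in Section \ref{sect:prelim}: applying It\^o's and Dynkin's formulas to $\mathbf{X}^{(n)}_t(f)=\tfrac1n\mathbf{Z}^{(n)}_t(f)$ places a factor $\tfrac1n$ in front of each Brownian and compensated-Poisson stochastic integral, and squaring those integrals produces the $\tfrac1n$ prefactor of the bracket.
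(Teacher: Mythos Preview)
Your argument is correct and is essentially the paper's own approach: deduce the decomposition of $\mathbf{X}^{(n)}$ from Lemma~\ref{semimart} by the scaling $\mathbf{X}^{(n)}=\tfrac1n\mathbf{Z}^{(n)}$. The paper phrases this via the generator identity $\mathbb{L}^{(n)}F_f(\mu)=\mathbb{L}F_{f/n}(n\mu)$ for cylindrical functionals $F_f(\mu)=F(\mu(f))$, whereas you simply divide the semi-martingale decomposition by $n$ and invoke $\langle cN\rangle=c^2\langle N\rangle$; these are two wordings of the same computation, and your added discussion of square-integrability is a welcome clarification that the paper leaves implicit.
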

\begin{proof}
It is a direct consequence of Lemma \ref{semimart}. Indeed, if $\mathbb{L}^{(n)}$ is the generator of $\mathbf{X}^{(n)}$ then it verifies
$$
\mathbb{L}^{(n)} F_{f}(\mu) = \restriction{\partial_t \E [ F_{f}( \mathbf{X}^{(n)} ) |\mathbf{X}^{(n)}_0= \mu]}{t=0} = \restriction{\partial_t \E [ F_{f/n}( \mathbf{Z}^{(n)} ) |\mathbf{Z}^{(n)}_0= n \mu]}{t=0} = \mathbb{L} F_{f/n} (n \mu),
$$ 
where $F_f(\mu)=F(\mu(f))$, $F,f$ are two test functions and $\mathbb{L}$ is the generator of $\mathbf{Z}$.
\end{proof}

\begin{Rq}[Non explosion]
Let us recall that, by Lemma \ref{lem:rkborne}, if the assumptions of Theorem \ref{th:Grandpopintro} hold then Assumption \ref{hyp:nonexplo} holds; that is there is no explosion.
\end{Rq}
Let us denote by $\mathcal{L}(U)$ the law of $U$, for any random variable $U$.
\begin{lem}
Under the assumptions of Theorem \ref{th:Grandpopintro} the sequence $(\mathcal{L}(\mathbf{X}^{(n)}))_{n\geq 1}$ is uniformly tight in the space of probability measures on $\mathbb{D}([0,T],(\mathcal{M}(E),d_v))$.
\end{lem}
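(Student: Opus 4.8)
The plan is to establish tightness via the Aldous--Rebolledo criterion applied to the semimartingale decomposition of Lemma \ref{semimart2}, working coordinate by coordinate in the vague topology. Recall that a sequence of $\mathcal{M}(E)$-valued processes is uniformly tight in $\mathbb{D}([0,T],(\mathcal{M}(E),d_v))$ as soon as, for each $f$ in a countable convergence-determining family $\mathcal{D}\subset C_0(E)\cap\mathcal{D}(G^2)$, the sequence of real-valued processes $(\mathbf{X}^{(n)}(f))_{n\geq1}$ is tight in $\mathbb{D}([0,T],\R)$, together with a compact containment condition. So first I would fix such an $f$ and treat $\mathbf{X}^{(n)}_t(f) = \mathbf{X}^{(n)}_0(f) + \mathbf{M}^{(n)}_t(f) + \mathbf{V}^{(n)}_t(f)$.

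The core estimate is a uniform bound on the total mass. Since $r\leq\bar r$ and $p_k\equiv0$ for $k\geq\bar k$, Lemma \ref{lem:rkborne} gives $\E[N_t]=\E[\mathbf{Z}^{(n)}_t(E)]\leq \E[\mathbf{Z}^{(n)}_0(E)]\,e^{(\bar k-1)\bar r T}$, hence
$$
\sup_{n\geq1}\ \E\left[\sup_{t\leq T}\mathbf{X}^{(n)}_t(E)\right] \leq C_T \sup_{n\geq1}\E\left[\mathbf{X}^{(n)}_0(E)\right] < +\infty,
$$
using the assumed bound on the starting masses (the supremum in $t$ can be pulled inside because $t\mapsto\E[N_t]$ is monotone after bounding by a pure branching process, or alternatively via a maximal inequality on the mass martingale). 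This controls the compact containment condition and also bounds all the integrands appearing below, since $f$, $Gf$, $Gf^2$ and the branching terms are all bounded (here the hypothesis that either $E$ is compact or $E\subset\R$ with $F_j^{(k)}(x,\theta)\leq x$ enters: in the non-compact case one works with $f\in C_0$, and the monotonicity $F_j^{(k)}(x,\theta)\leq x$ guarantees the scaled process stays supported where the mass started so that no mass escapes to infinity — this is what makes the vague limit a genuine finite measure).

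Next I would verify the two Aldous--Rebolledo conditions. For the finite-variation part, for stopping times $S\leq S'\leq (S+\delta)\wedge T$,
$$
\E\big[\,|\mathbf{V}^{(n)}_{S'}(f)-\mathbf{V}^{(n)}_{S}(f)|\,\big] \leq \E\left[\int_S^{S'}\mathbf{X}^{(n)}_s\big(|Gf| + \bar r\,(\bar k+1)\|f\|_\infty\big)\,ds\right] \leq C\,\delta\,\sup_n\E\left[\sup_{t\leq T}\mathbf{X}^{(n)}_t(E)\right],
$$
which tends to $0$ with $\delta$ uniformly in $n$. For the martingale part, the bracket from Lemma \ref{semimart2} carries a prefactor $1/n$, so
$$
\E\big[\langle\mathbf{M}^{(n)}(f)\rangle_{S'}-\langle\mathbf{M}^{(n)}(f)\rangle_{S}\big] \leq \frac{C}{n}\,\delta\,\sup_n\E\left[\sup_{t\leq T}\mathbf{X}^{(n)}_t(E)\right]\xrightarrow[n\to\infty]{}0,
$$
and one applies the Aldous condition to $\mathbf{M}^{(n)}(f)$ via its bracket together with Lenglart's inequality; tightness of the initial values $\mathbf{X}^{(n)}_0(f)$ follows from their assumed convergence in distribution. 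Combining the three pieces gives tightness of $(\mathbf{X}^{(n)}(f))_n$ in $\mathbb{D}([0,T],\R)$ for each $f\in\mathcal{D}$, and then the standard criterion (e.g. \cite{critere,Roel}) upgrades this to uniform tightness of $(\mathcal{L}(\mathbf{X}^{(n)}))_n$ on $\mathbb{D}([0,T],(\mathcal{M}(E),d_v))$.

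I expect the main obstacle to be the compact containment / no-escape-of-mass issue that justifies working in the vague topology and ultimately identifying the limit as a finite measure: the vague topology is not metrizable in a way that makes total mass continuous, so one must use precisely the dichotomy in the hypotheses (compact $E$, or $E\subset\R$ with the monotonicity $F_j^{(k)}(x,\theta)\leq x$) to keep the mass from leaking, and handle the passage from $C_0$ test functions back to $C_b$ test functions carefully. The Aldous--Rebolledo estimates themselves are routine once the uniform mass bound from Lemma \ref{lem:rkborne} is in hand.
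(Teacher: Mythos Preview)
Your proposal is correct and follows essentially the same route as the paper: reduce vague tightness to tightness of the real-valued processes $\mathbf{X}^{(n)}(f)$ via Roelly's criterion \cite{Roel}, then verify the Aldous--Rebolledo conditions using the semimartingale decomposition of Lemma \ref{semimart2} and the uniform mass bound from Lemma \ref{lem:rkborne}. The paper carries out exactly these steps, testing against $f\in C_c^\infty\cup\{\1\}$ and bounding the increments of $\mathbf{V}^{(n)}(f)$ and $\langle\mathbf{M}^{(n)}(f)\rangle$ just as you do.

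One clarification worth making: the dichotomy hypothesis ($E$ compact, or $E\subset\R$ with $F_j^{(k)}(x,\theta)\leq x$) is \emph{not} needed for this lemma. Vague tightness follows from the mass bound alone; the paper invokes that hypothesis only later, in Lemma \ref{cvu}, to control escape of mass and upgrade the vague limit to a weak limit. So your discussion of ``no mass leaking to infinity'' belongs to the proof of Theorem \ref{th:Grandpopintro} proper, not here. Also, for the estimates you only need $\sup_{t\leq T}\E[\mathbf{X}^{(n)}_t(E)]$, which is exactly what Lemma \ref{lem:rkborne} delivers; the stronger $\E[\sup_{t\leq T}\mathbf{X}^{(n)}_t(E)]$ is not required at this stage.
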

\begin{proof}
We follow the approach of \cite{FM}. According to \cite{Roel}, it is enough to show that, for any continuous bounded function $f$, the sequence of laws of $\mathbf{X}^{(n)} (f)$ is tight in $\mathbb{D}([0,T],\R)$. To prove it, we will use the Aldous-Rebolledo criterion. Let $C^\infty_c$ be the sef of functions of class $C^\infty$ with finite support, we set $S=C_c^\infty \cup \{\ \1 \}$, where $\1$ is the mapping $x\mapsto 1$. We have to prove that, for any function $f \in S$, we have
\begin{enumerate}
\item $\forall t \geq 0$, $\left(\mathbf{X}^{(n)}_t (f)\right)_{n \geq 0}$ is tight; 
\item for all $n \in \N$, and $\varepsilon, \eta >0$, there exists $\delta>0$ such that for each stopping time $S_n$ bounded by $T$,
$$
\limsup_{n \rightarrow + \infty} \sup_{0 \leq u \leq \delta} \p (| \mathbf{V}^{(n)}_{S_n + u} (f) - \mathbf{V}^{(n)}_{S_n}(f) | \geq \eta) \leq \varepsilon.
$$
$$
\limsup_{n \rightarrow + \infty} \sup_{0 \leq u \leq \delta} \p (| \langle \mathbf{M}^{(n)}(f) \rangle_{S_n + u} - \langle \mathbf{M}^{(n)}(f) \rangle_{S_n} | \geq \eta ) \leq \varepsilon.
$$
\end{enumerate}

The first point is the tightness of the family of time-marginals $(\mathbf{X}^{(n)}_t(f))_{n\geq1}$ and the second point, called the Aldous condition, gives a "stochastic continuity". It looks like the Arzel\`{a}-Ascoli Theorem. Using Lemma \ref{lem:rkborne}, there exists $C>0$ such that
\begin{align*}
\p(|\mathbf{X}^{(n)}_t (f)| > k) &\leq \frac{\Vert f \Vert_{\infty} \ \E [\mathbf{X}^{(n)}_t (\1)] }{k}\\
&\leq \frac{\Vert f \Vert_{\infty} \ C \ \E[\mathbf{X}^{(n)}_0 (\1) ] }{\ k},
\end{align*}
which tends to $0$ as $k$ tends to infinity. This proves the first point.
Let $\delta > 0$, we get for all stopping times $S_n \leq T_n \leq (S_n+\delta) \leq T $, that there exist $C',C_f>0$ such that
\begin{align*}
\E[| \mathbf{V}^{(n)}_{T_n} (f) - \mathbf{V}^{(n)}_{S_n}(f) |]
&= \E\left[\left| \int_{S_n}^{T_n} \mathbf{X}^{(n)}_s ( G f) \right.\right.\\
&+ \left.\left.\int_E r(x) \int_0^1 \sum_{k \in \N} \sum_{j=1}^k f\left(F^{(k)}_j (x,\theta)\right)-f(x)  p_k(x) d\theta \ \mathbf{X}^{(n)}_s(dx) ds \right|\right]\\
&\leq  C' \left[\Vert G f \Vert_{\infty} + \Vert f \Vert_{\infty}\right] \times \E\left[ \left|T_n-S_n\right|\right]  \\
&\leq C_f  \delta.
\end{align*}
In the other hand, there exists $C'_f>0$ such that
\begin{align*}
&\E[| \langle \mathbf{M}^{(n)}(f) \rangle_{T_n} - \langle \mathbf{M}^{(n)}(f) \rangle_{S_n}|] \\
=&\frac{1}{n} \E\left[\left| \int_{S_n}^{T_n} 2  \mathbf{X}^{(n)}_s (G f^2) - 2 \mathbf{X}^{(n)}_s ( f G f ) \right.\right.\\
+&\left.\left. \int_E r(x) \int_0^1 \sum_{k \in \N}  \sum_{j=1}^k \left( f\left(F_j^{(k)}(x,\theta)\right) - f(x) \right)^2 p_k d\theta \ \mathbf{X}^{(n)}_s(dx) ds \right|\right]\\
\leq & \frac{ C'_f \delta }{n.}
\end{align*}
 Then, for a sufficiently small $\delta$, the second point is verified and we conclude that $(\mathbf{X}^{(n)})_{n\geq 1}$ is uniformly tight in $\mathbb{D}([0,T],(\mathcal{M}(E),d_v))$.
\end{proof} 
\begin{proof}[Proof of Theorem \ref{th:Grandpopintro}] 
Let us denote by $\mathbf{X}$ a limiting process of $(\mathbf{X}^{(n)})_{n\geq 1}$; namely there exists an increasing sequece $(u_n)_{n\geq1}$, on $\N^*$, such that $(\mathbf{X}^{(u_n)})_{n\geq 1}$ converges to $\mathbf{X}$. It is
almost surely continuous in $(\mathcal{M}(E),v)$ since
\begin{equation}
\label{eq:Xcont}
\sup_{t \geq 0} \sup_{ \Vert f \Vert_\infty \leq 1} | \mathbf{X}^{(n)}_{t-}(f) - \mathbf{X}^{(n)}_t (f) | \leq \frac{\bar{k}}{n}.
\end{equation}
In the case where $E$ is compact, the vague and weak topologies coincide. By Doob's inequality, there exists $C>0$ such that
$$
\sup_{f} \E\left[ \sup_{t \leq T} \left| \mathbf{M}_t^{(n)}(f) \right| \right] \leq 2 \sup_{f} \E\left[ \langle \mathbf{M}^{(n)}(f) \rangle_T \right] \leq \frac{ C }{n}
$$
where the supremum is taken over all the function $f \in \mathcal{D}(G^2)$ such that $\Vert f \Vert_\infty \leq 1$. Hence,
\begin{equation}
\label{Mtdzero}
\lim_{n \rightarrow + \infty} \sup_{f} \E\left[ \sup_{t \leq T} \left| \mathbf{M}^{(n)}_t(f) \right| \right] = 0.
\end{equation}
But as
\begin{align*}
\mathbf{M}^{(n)}_t(f) &= \mathbf{X}^{(n)}_t (f) - \mathbf{X}^{(n)}_0 (f)\\
 &- \int_0^t \int_E G f(x)  + r(x) \int_0^1 \sum_{k \in \N} \sum_{j=1}^k f\left(F^{(k)}_j (x,\theta)\right)-f(x) \ p_k(x) d\theta \ \mathbf{X}^{(n)}_s(dx)  ds,
\end{align*}
we have
\begin{align*}
0=&\mathbf{X}_t (f) - \mathbf{X}_0 (f) - \int_0^t \mathbf{X}_s ( G f) \\
+&\int_E r(x)  \left( \sum_{j=1}^k f(F^{(K)}_j (x,\theta)) \ p_k(x) \ d\theta -f(x) \right) \mathbf{X}_s (dx) ds.
\end{align*}
Since this equation has a unique solution, it ends the proof when $E$ is compact. This approach fails in the non-compact case. Nevertheless, we can use the M\'{e}l\'{e}ard-Roelly criterion \cite{MR}. We have to prove that $\mathbf{X}$ is in $C([0,T],(\mathcal{M}(E),w))$ and $\mathbf{X}^{(n)}(\1)$ converges to $\mathbf{X}(\1)$. By \eqref{eq:Xcont}, $\mathbf{X}$ is continuous. To prove that $\mathbf{X}^{(n)}(\1)$ converges to $\mathbf{X}(\1)$, we use the following lemmas.

\begin{lem}[Approximation of indicator functions]

For each $k\in \N$, there exists $\psi_k\in \mathcal{D}(G)$ such that:
$$
\forall x \in E,\  \1_{[k;+ \infty[} (x)  \leq \psi_k (x) \leq \1_{[k-1;+ \infty[} (x) \ \mbox{and} \ \exists C, \ G \psi_k \leq C \psi_{k-1}.
$$
\end{lem}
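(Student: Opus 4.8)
The plan is to produce the $\psi_k$ by translating a single fixed smooth ``plateau'' function. First I would fix once and for all a function $\phi \in \mathcal{C}^\infty(\R)$ that is non-decreasing, satisfies $0 \le \phi \le 1$, and has $\phi \equiv 0$ on $(-\infty,0]$ and $\phi \equiv 1$ on $[1,+\infty)$; such a $\phi$ exists by the usual mollification of a step function, and all of its derivatives are bounded and supported in $[0,1]$. Then, for each $k \in \N$, I would set $\psi_k(x) = \phi(x-k+1)$ for $x \in E$. Since we are in the non-compact regime where $E \subset \R$ (recall the hypotheses of Theorem \ref{th:Grandpopintro}), $G$ acts as $Gf(x) = b(x)f'(x) + \sigma f''(x)$, and $\psi_k$ is the restriction to $E$ of a bounded $\mathcal{C}^2$ function with bounded derivatives, so $\psi_k \in \mathcal{C}^2_b(E,\R) = \mathcal{D}(G)$.

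Next I would check the sandwiching inequality directly from the monotonicity and the plateaus of $\phi$: if $x < k-1$ then $x-k+1 < 0$, so $\psi_k(x)=0=\1_{[k-1;+\infty[}(x)=\1_{[k;+\infty[}(x)$; if $k-1 \le x < k$ then $0 \le \psi_k(x) \le 1$, which lies between $\1_{[k;+\infty[}(x)=0$ and $\1_{[k-1;+\infty[}(x)=1$; and if $x \ge k$ then $x-k+1 \ge 1$, so $\psi_k(x)=1=\1_{[k;+\infty[}(x) \le \1_{[k-1;+\infty[}(x)$. This yields the first claim.

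For the generator bound I would compute $G\psi_k(x) = b(x)\phi'(x-k+1) + \sigma \phi''(x-k+1)$. Since $\phi'$ and $\phi''$ vanish outside $[0,1]$, $G\psi_k$ is supported in $[k-1,k]\cap E$. Off that set, $G\psi_k = 0 \le C\psi_{k-1}$ for any $C>0$, since $\psi_{k-1} \ge 0$. On $[k-1,k]\cap E$ one has $x-(k-1)+1 = x-k+2 \ge 1$, hence $\psi_{k-1}(x) = \phi(x-k+2) = 1$; moreover $b$ is continuous, hence bounded on the compact interval $[k-1,k]$, so $|G\psi_k(x)| \le \|\phi'\|_\infty \sup_{[k-1,k]} |b| + \sigma \|\phi''\|_\infty =: C$. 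Therefore $G\psi_k(x) \le C = C\psi_{k-1}(x)$ on $[k-1,k]\cap E$, and combining the two regions gives $G\psi_k \le C\psi_{k-1}$ on all of $E$.

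I do not expect a real obstacle: the argument is a routine mollifier construction. The one point deserving attention is the bookkeeping of supports --- one must arrange the translation so that the region where $\psi_k$ is non-constant (hence where $G\psi_k$ may be nonzero) lies entirely inside the region where $\psi_{k-1}$ has already reached the value $1$, which is precisely why $\psi_k$ ``switches on'' between $k-1$ and $k$ while $\psi_{k-1}$ switches on between $k-2$ and $k-1$. (If one wanted $C$ to be independent of $k$, which the statement does not ask for since $C$ is quantified after $k$, one would additionally need $b$ bounded on $E$.)
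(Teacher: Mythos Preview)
Your construction is correct and is precisely the standard mollifier argument: translate a single smooth ramp $\phi$ and use that the transition region of $\psi_k$ sits where $\psi_{k-1}$ already equals $1$. The paper itself does not give a proof here but simply refers to external lemmas; those references carry out essentially the same construction you wrote down, so your approach matches.

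One remark worth recording: your final parenthetical is on target. As the lemma is phrased, $C$ may depend on $k$, and your bound $C = \|\phi'\|_\infty \sup_{[k-1,k]}|b| + \sigma\|\phi''\|_\infty$ does so through $b$. However, in the subsequent Lemma~\ref{cvu} the constant is iterated over $k$ to produce the factor $(C_1 T)^l/l!$, which tacitly requires $C$ uniform in $k$. So for the application one actually needs $b$ bounded on $E$ (or some uniform control), not merely locally bounded; this is a small imprecision in the paper's statement rather than in your argument.
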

\begin{proof}
See \cite[lemma 4.2]{derlemm2} or \cite[lemma 3.3]{dernlemm}.
\end{proof}

\begin{lem}[Commutation of limits]
\label{cvu}
Under the assumptions of Theorem \ref{th:Grandpopintro},
$$
\lim_{k \rightarrow +\infty} \limsup_{n \rightarrow + \infty} \ \E\left[\sup_{t \leq T} \mathbf{X}^{(n)}_t(\psi_k)\right]=0,
$$
where $(\psi_k)_{k \geq 0}$ are defined as in the previous lemma.
\end{lem}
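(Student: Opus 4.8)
The plan is to run a Gronwall-type estimate on the tail masses, using the semimartingale decomposition of Lemma~\ref{semimart2} and the structural bound $G\psi_k\le C\psi_{k-1}$ from the preceding lemma. First note that the statement is trivial when $E$ is compact: then $\psi_k\le\1_{[k-1;+\infty[}$ forces $\psi_k\equiv 0$ on $E$ as soon as $k$ exceeds the diameter of $E$, so $\mathbf{X}^{(n)}_t(\psi_k)=0$ for all large $k$. Hence I may assume $E\subset\R$ with $F_j^{(k)}(x,\theta)\le x$ for all $j\le k$ and $\theta\in[0,1]$. Write $\phi_k^{(n)}(t):=\E[\sup_{s\le t}\mathbf{X}^{(n)}_s(\psi_k)]$; the goal is $\lim_k\limsup_n\phi_k^{(n)}(T)=0$. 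Since $\mathcal{C}^2_b$ is stable under squaring we have $\mathcal{D}(G^2)=\mathcal{D}(G)$, so $\psi_k\in\mathcal{D}(G^2)$ and Lemma~\ref{semimart2} applies, giving $\mathbf{X}^{(n)}_t(\psi_k)=\mathbf{X}^{(n)}_0(\psi_k)+\mathbf{M}^{(n)}_t(\psi_k)+\int_0^t\mathbf{X}^{(n)}_s(\mathcal{G}\psi_k)\,ds$. The key pointwise estimate is
$$
\mathcal{G}\psi_k\le C'\,\psi_{k-1}\quad\text{on }E,\qquad C'=C+\bar r\bar k :
$$
indeed $G\psi_k\le C\psi_{k-1}$ by the preceding lemma, and since $F_j^{(k)}(x,\theta)\le x$ one has $\psi_k(F_j^{(k)}(x,\theta))\le\1_{[k-1;+\infty[}(x)\le\psi_{k-1}(x)$, so (using $\sum_\ell p_\ell=1$, $\ell\le\bar k$, $r\le\bar r$) the branching part is $\le\bar r\bar k\,\psi_{k-1}$, while $-r\psi_k\le 0$. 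As $\psi_{k-1}\ge 0$, this yields
$$
\sup_{t\le T}\mathbf{X}^{(n)}_t(\psi_k)\le\mathbf{X}^{(n)}_0(\psi_k)+\sup_{t\le T}\bigl|\mathbf{M}^{(n)}_t(\psi_k)\bigr|+C'\int_0^T\mathbf{X}^{(n)}_s(\psi_{k-1})\,ds.
$$

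Next I would handle the martingale term. Each summand in the bracket formula of Lemma~\ref{semimart2} is bounded by a constant times $\mathbf{X}^{(n)}_s(\1)$ (using $\|\psi_k\|_\infty\le 1$, $r\le\bar r$, $p_\ell=0$ for $\ell\ge\bar k$), so $\langle\mathbf{M}^{(n)}(\psi_k)\rangle_T\le\frac{C_k}{n}\int_0^T\mathbf{X}^{(n)}_s(\1)\,ds$; by Lemma~\ref{lem:rkborne}, $\E[\mathbf{X}^{(n)}_s(\1)]\le\E[\mathbf{X}^{(n)}_0(\1)]e^{(\bar k-1)\bar rT}$, so Doob's inequality gives $\epsilon_n:=\sup_{\|f\|_\infty\le1}\E[\sup_{t\le T}|\mathbf{M}^{(n)}_t(f)|]\to 0$ as $n\to\infty$ (this is exactly \eqref{Mtdzero}, uniform in $k$). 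Taking expectations in the display above and using $\E[\mathbf{X}^{(n)}_s(\psi_{k-1})]\le\phi_{k-1}^{(n)}(s)$ produces the recursion $\phi_k^{(n)}(T)\le b_k^{(n)}+\epsilon_n+C'\int_0^T\phi_{k-1}^{(n)}(s)\,ds$, where $b_m^{(n)}:=\E[\mathbf{X}^{(n)}_0(\psi_m)]$ is non-increasing in $m$ and bounded by $A:=\sup_n\E[\mathbf{X}^{(n)}_0(\1)]<\infty$. Since $\psi_0\le\1$, a separate Gronwall argument (again with Lemma~\ref{lem:rkborne} and the mass bound) gives $\phi_0^{(n)}(T)\le K$ with $K$ independent of $n$. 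Iterating the recursion down to $\phi_0$,
$$
\phi_k^{(n)}(T)\le\sum_{j=0}^{k-1}\frac{(C'T)^j}{j!}\bigl(b_{k-j}^{(n)}+\epsilon_n\bigr)+\frac{(C'T)^k}{k!}\,K.
$$

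Finally I would pass to the limit. Fix $p\ge 1$; for $k>p$, split the sum at $j=p$. By monotonicity of $m\mapsto b_m^{(n)}$, $\sum_{j<p}\frac{(C'T)^j}{j!}b_{k-j}^{(n)}\le e^{C'T}b_{k-p}^{(n)}$, while $\sum_{j\ge p}\frac{(C'T)^j}{j!}b_{k-j}^{(n)}\le A\,R_p$ with $R_p:=\sum_{j\ge p}\frac{(C'T)^j}{j!}$, and $\sum_j\frac{(C'T)^j}{j!}\epsilon_n\le e^{C'T}\epsilon_n$. Hence $\limsup_n\phi_k^{(n)}(T)\le e^{C'T}\limsup_n b_{k-p}^{(n)}+A R_p+\frac{(C'T)^k}{k!}K$, and letting $k\to\infty$,
$$
\limsup_k\limsup_n\phi_k^{(n)}(T)\le e^{C'T}\limsup_m\limsup_n\E\bigl[\mathbf{X}^{(n)}_0(\psi_m)\bigr]+A R_p.
$$
The assumption on the initial data gives $\limsup_m\limsup_n\E[\mathbf{X}^{(n)}_0(\psi_m)]=0$ (it is immediate for $\mathbf{X}^{(n)}_0=\frac1n\sum_{k=0}^n\delta_{Y_k}$, where the left side equals $\limsup_m\mathbf{X}_0(\psi_m)\le\limsup_m\mathbf{X}_0(\{|x|\ge m-1\})=0$, $\mathbf{X}_0$ being a finite measure); letting $p\to\infty$ sends $R_p\to0$, so $\lim_k\limsup_n\phi_k^{(n)}(T)=0$.

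The hard part is that the family $(\psi_k)_k$ is \emph{not} stable under the dynamics: $\mathcal{G}\psi_k$ is controlled only by the \emph{larger} function $\psi_{k-1}$, so a direct Gronwall estimate at level $k$ cannot close. The mechanism that saves the argument is iterating the one-step recursion down to $\psi_0$ and exploiting summability of the factorials $\frac{(C'T)^j}{j!}$, which lets one trade ``few steps but small residual tail mass $b_{k-p}^{(n)}$'' against ``many steps but a tiny combinatorial factor $R_p$''. The only other delicate points — the regularity needed for the bracket formula and \eqref{Mtdzero}, and finiteness of $\phi_0^{(n)}(T)$ — are routine, resting respectively on $\mathcal{D}(G^2)=\mathcal{C}^2_b$ and on the non-explosion bound of Lemma~\ref{lem:rkborne}.
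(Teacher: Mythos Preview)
Your argument is correct and follows essentially the same route as the paper: bound $\mathcal{G}\psi_k$ by a constant times $\psi_{k-1}$ via $G\psi_k\le C\psi_{k-1}$ and the monotonicity $F_j^{(k)}(x,\theta)\le x$, iterate the resulting one-step recursion down to level $0$, and split the factorial sum to separate a small tail mass of the initial data from a small combinatorial remainder, with the martingale handled by Doob and \eqref{Mtdzero}. The only cosmetic differences are that the paper first estimates $\E[\mathbf{X}^{(n)}_t(\psi_k)]$ (without the sup), keeps a self-term $\mu_s^{n,k}$ which it removes by Gronwall before iterating, uses the fixed splitting index $\lfloor k/2\rfloor$ rather than your free parameter $p$, and appends the sup and the martingale term only at the very end; both versions leave the step $\limsup_m\limsup_n\E[\mathbf{X}^{(n)}_0(\psi_m)]=0$ implicit under the initial-data assumptions.
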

The proof is postponed after. Hence, a same computation to \cite{dernlemm} gives us the convergence in $\mathbb{D}([0,T],(\mathcal{M}(E),w))$. Thus, each subsequence converges to the equation (\ref{equation}). The end of the proof follow with the same argument of the compact case.

We can give another argument, which does not use the M\'el\'eard-Roelly criterion \cite{MR}. As
$$
\sup_{t \geq 0} \sup_{ \Vert f \Vert_\infty \leq 1} | \mathbf{X}^{(n)}_{t-}(f) - \mathbf{X}^{(n)}_t (f) | \leq \frac{\bar{k}}{n},
$$
$\mathbf{X}$ is continuous from $[0,T]$ to $(\mathcal{M}(E),d_w)$. Let $\mathbf{G}$ be a Lipschitz function on $C([0,T],(\mathcal{M}(E),d_w))$, we get, 
\begin{align*}
|\E[\mathbf{G}(\mathbf{X}^{(u_n)})] - \mathbf{G}(\mathbf{X})| 
& \leq \E\left[ \sup_{t \in [0, T]} d_w \left(\mathbf{X}_t^{(u_n)},\mathbf{X}_t\right) \right] \\
& \ \leq \E\left[ \sup_{t \in [0, T]} d_w\left( \mathbf{X}_t^{(u_n)} , \mathbf{X}_t^{(u_n)}( \cdot \times (1-\psi_k))\right) \right] \\
&  + \E\left[ \sup_{t \in [0, T]} d_w\left( \mathbf{X}_t^{(u_n)}( \cdot \times (1-\psi_k)) , \mathbf{X}_t( \cdot \times (1-\psi_k))\right) \right] \\
&  +  \sup_{t \in [0, T]} d_w \left( \mathbf{X}_t ( \cdot \times (1-\psi_k)) , \mathbf{X}_t \right).
\end{align*}

According to Lemma \ref{cvu}, we obtain that
$$
\lim_{k \rightarrow +\infty} \limsup_{n \rightarrow + \infty} \ \E\left[ \sup_{t \in [0, T]} d_w\left( \mathbf{X}_t^{(u_n)} , \mathbf{X}_t^{(u_n)}( \cdot \times (1-\psi_k))\right) \right]=0
$$ 
and
$$
\lim_{k \rightarrow +\infty} \ \sup_{t \in [0, T]} d_w ( \mathbf{X}_t ( \cdot \times (1-\psi_k)) , \mathbf{X}_t ) =0.
$$
Then, we have 
\begin{align*}
&d_w \left( \mathbf{X}_t^{(u_n)}( \cdot \times (1-\psi_k)) , \mathbf{X}_t( \cdot \times (1-\psi_k))\right) \\
= \ &d_v \left( \mathbf{X}_t^{(u_n)}( \cdot \times (1-\psi_k)) , \mathbf{X}_t( \cdot \times (1-\psi_k))\right).
\end{align*}
Thus,
$$
\lim_{k \rightarrow +\infty} \limsup_{n \rightarrow + \infty} \ \E\left[ \sup_{t \in [0, T]} d_w\left( \mathbf{X}_t^{(u_n)}( \cdot \times (1-\psi_k)) , \mathbf{X}_t( \cdot \times (1-\psi_k))\right) \right] = 0,
$$
by continuity of $\nu \mapsto \nu(1-\psi_k)$ in $\mathbb{D}(\mathcal{M}(E),d_v)$. And finally,
$$
\lim_{n \rightarrow + \infty} \mathbf{G}\left(\mathbf{X}^{(u_n)}\right) = \mathbf{G}(\mathbf{X}),
$$
which completes the proof.
 \end{proof}
\begin{proof}[proof of Lemma \ref{cvu}]
If $\mu_t^{n,k}= \E(\mathbf{X}^{(n)}_t(\psi_k))$ then we have
\begin{align*}
\mu_t^{n,k}
&= \E[\mathbf{X}^{(n)}_0 (\psi_k)] + \int_0^t \E \left[ \int_E G \psi_k(x) \right.\\
&\left.+ r(x) \left( \sum_{k\geq 1} \sum_{j=1}^{k} p_k(x) \int_0^1 \psi_k(F_j^{(k)}(x,\theta)) - \psi_k(x)\right) \mathbf{X}^{(n)}_s(dx) \right] ds\\
&\leq \mu_0^{n,k} + C \int_0^t \mu_s^{n,k-1} + \mu_s^{n,k} ds.\\
\end{align*}
Now, by Gronwall's Lemma, iteration and monotonicity, we deduce that
\begin{align*}
\mu_t^{n,k}
&\leq  C_1 ( \mu_0^{n,k} + \int_0^t \mu_s^{n,k-1} ds)\\
&\leq  C_1 \mu_0^{n,k} + C_1^2 T \mu_0^{n,k-1} + \int_0^t \int_0^s \mu_u^{n,k-2} du ds\\
&\leq   \sum_{l=0}^{k-1}  \mu_0^{n,k-l} C_1 \frac{( C_1 T )^l}{l!} + C_2 \times \frac{( C_1 T )^k}{k!} \\
&\leq    \mu_0^{n,\lfloor k/2 \rfloor} C_1 e^{C_1 T} + C_3 \sum_{l>\lfloor k/2 \rfloor} \frac{( C_1 T )^l}{l!} + C_2 \times \frac{( C_1 T )^k}{k!},
\end{align*}
where $C_1,C_2$ and $C_3$ are three constants. Thus,
$$
\lim_{k \rightarrow + \infty} \limsup_{n \rightarrow + \infty} \mu_t^{n,k}=0.
$$
And finally the following expression completes the proof,
$$
\E\left[\sup_{t \leq T} |\mathbf{X}_t^{n}(\psi_k)|\right] \leq \mu_0^{n,k} + C \int_0^t \mu_s^{n,k-1} + \mu_s^{n,k} ds + \E\left[\sup_{t \leq T} |\mathbf{M}_t^{(n)}(\psi_k)|\right].
$$

\end{proof}
\section{Main example : a size-structured population model}
\label{exemple}

Let us introduce ourf main example. It is a size-structured population model which represents the cell mitosis. It is described as follows: the underlying process $X$ is deterministic and linear and when a cell dies, it divides in two parts. Formally and with our notations, we have
\begin{equation}
\label{notmit1}
E= [0, + \infty), \ Gf= f' \  \text{ and } \ p_2=1,
\end{equation}
\begin{equation}
\label{notmit2}
 \ \forall x \in E, \ \forall \theta \in [0,1], \ F^{(2)}_1 (x,\theta)=F^{-1}(\theta) x\ \text{ and } \ F^{(2)}_2 (x,\theta)= (1-F^{-1}(\theta)) x,
\end{equation} 
where $F$ is the cumulative distribution function of the random variable in $[0, 1]$. It verifies $F(x) = 1-F(1-x)$.
In this case, one cell lineage is generated by:

$$
\forall f \in C^1, \forall x \geq 0, \  L f = f'(x) + r(x) \left[ \E[f(H x)] - f(x) \right],
$$
where $H$ is distributed according to $F$. This process is sometimes called the TCP (Transmission Control Protocol) process in computer science \cite{TCP,GRZ,lcst,Ott2}. Firstly, we prove the non explosion even if $r$ is not bounded.
\begin{lem}[Non explosion]
\label{CS}
Let $p \geq 1$. If for all $x\in \R_+^*$,  $r(x) \leq C_0 (1+x^p)$, and $\mathbf{z}_0(1+x^p)<+\infty$, then our process is no, explosive. Moreover 
$$
\E\left[\sup_{s \in [0, T] } \mathbf{Z}_s (1+x^p)\right] \leq \mathbf{z}_0 (1+x^p) e^{C_p T},
$$
where $C_p$ is constant and $T>0$.
\end{lem}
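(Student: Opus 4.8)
The strategy is to control the moment functional $t \mapsto \mathbf{Z}_t(1+x^p)$ by a Gronwall argument, using the semi-martingale decomposition of Lemma \ref{semimart} applied to the (unbounded) test function $\varphi(x) = 1+x^p$. Since $\varphi \notin \mathcal{D}(G)$, the first step is to regularize: introduce the truncated functions $\varphi_N(x) = (1+x^p)\wedge N$, or better a smooth cutoff $\varphi_N \in \mathcal{D}(G)$ with $\varphi_N \uparrow \varphi$, $0 \le \varphi_N' \le \varphi'$ and $\varphi_N$ agreeing with $\varphi$ on $[0,N]$. Also, since the process may a priori explode, one works on the killed/stopped process: let $\tau_N = \inf\{t : N_t \ge N \text{ or } \sup_{u\in\mathcal{V}_t} X^u_t \ge N\}$ and run the SDE up to $t\wedge\tau_N$, so that all the quantities below are finite and the martingale terms are genuine martingales.

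Next I would compute the drift term $\mathbf{V}_t(\varphi_N)$ from Lemma \ref{semimart}. The motion part contributes $G\varphi_N(x) = \varphi_N'(x) \le p x^{p-1} \le p(1+x^p)$ (using $Gf = f'$ here, with $\sigma = 0$). The branching part contributes
$$
r(x)\Big(\varphi_N(Hx\text{-type terms}) - \varphi_N(x)\Big) \le r(x)\,\varphi_N(x) \le C_0(1+x^p)^2
$$
at first glance — but the crucial point, exactly as in the fragmentation structure \eqref{notmit1}--\eqref{notmit2}, is that division only decreases size: $F_1^{(2)}(x,\theta) + F_2^{(2)}(x,\theta) = x$ with both pieces in $[0,x]$, so $\sum_{j} (F_j^{(2)}(x,\theta))^p \le x^p$ for $p\ge 1$ (superadditivity of $t\mapsto t^p$ on $\R_+$ run backwards: $a^p + b^p \le (a+b)^p$ when $a,b\ge 0$). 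Hence the jump term is bounded by $r(x)\big(2 - 1\big)\cdot 1 = r(x) \le C_0(1+x^p)$, since the $x^p$-contributions after division are dominated by the $x^p$-contribution before. So altogether $G\varphi_N(x) + r(x)(\cdots) \le C_p'(1+x^p)$ for a constant $C_p'$ depending only on $p$, $C_0$. Taking expectations (the martingale term vanishes), letting $u_t^N = \E[\mathbf{Z}_{t\wedge\tau_N}(\varphi_N)]$ we get $u_t^N \le u_0^N + C_p' \int_0^t u_s^N\,ds$, whence $u_t^N \le \mathbf{z}_0(1+x^p) e^{C_p' T}$ uniformly in $N$.

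Then I would upgrade this to the two claimed conclusions. First, the uniform-in-$N$ bound $\E[\mathbf{Z}_{t\wedge\tau_N}(1+x^p)] \le \mathbf{z}_0(1+x^p)e^{C_p'T}$ forces $\tau_N \to \infty$ a.s.: on $\{\tau_N \le T\}$ one has $\mathbf{Z}_{\tau_N}(1+x^p) \ge N$, so $\p(\tau_N\le T) \le N^{-1}\mathbf{z}_0(1+x^p)e^{C_p'T} \to 0$, and since $\tau_N$ is increasing the limit is a.s. infinite; in particular $N_t < +\infty$ a.s. for all $t$, which is non-explosion (Assumption \ref{hyp:nonexplo}). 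Second, for the supremum bound I would not take expectations immediately but first apply Doob's $L^1$ (or Doob's maximal) inequality to the martingale part $\mathbf{M}_{t\wedge\tau_N}(\varphi_N)$ — controlling $\E[\sup_{t\le T}|\mathbf{M}_{t\wedge\tau_N}(\varphi_N)|]$ by the bracket from Lemma \ref{semimart}, which is again bounded (using the same size-decreasing structure, $\big(\sum_j \varphi_N(F_j^{(2)}(x,\theta)) - \varphi_N(x)\big)^2 \le \varphi_N(x)^2$ and $G\varphi_N^2 \lesssim (1+x^p)^2$) by $C\int_0^T u_s^N\,ds \le C' T \,\mathbf{z}_0(1+x^p)e^{C_p'T}$, hence finite — and then use
$$
\sup_{s\le T}\mathbf{Z}_{s\wedge\tau_N}(\varphi_N) \le \mathbf{z}_0(\varphi_N) + C_p'\int_0^T \mathbf{Z}_{s\wedge\tau_N}(\varphi_N)\,ds + \sup_{s\le T}|\mathbf{M}_{s\wedge\tau_N}(\varphi_N)|,
$$
take expectations, and invoke a Gronwall-type argument on $\E[\sup_{s\le t}\mathbf{Z}_{s\wedge\tau_N}(\varphi_N)]$. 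Finally let $N\to\infty$ and apply the monotone convergence theorem (both $\varphi_N \uparrow \varphi$ and $\tau_N \uparrow \infty$) to obtain $\E[\sup_{s\le T}\mathbf{Z}_s(1+x^p)] \le \mathbf{z}_0(1+x^p)e^{C_pT}$, absorbing all constants into a single $C_p$.

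\textbf{Main obstacle.} The delicate point is the joint handling of the two regularizations — the spatial truncation $\varphi_N$ and the stopping time $\tau_N$ — so that Lemma \ref{semimart} applies rigorously (it is stated for bounded $f\in\mathcal{D}_t(G)$) while the bracket estimate needed for the Doob step still closes. One must be careful that the bracket involves $G\varphi_N^2$ and squared jump increments, which naively grow like $(1+x^p)^2$; this is why the fragmentation inequality $a^p+b^p\le(a+b)^p$ together with the hypothesis $r(x)\le C_0(1+x^p)$ is used a second time, and why the bound on the bracket is in terms of $\int_0^T u_s^N ds$ rather than something involving a $2p$-th moment. Getting the constant to depend only on $p$ (not on $N$) throughout, and verifying that the limiting functional $\mathbf{Z}_t(1+x^p)$ inherits the bound by monotone convergence rather than merely Fatou (which would already suffice for finiteness but the statement asks for the clean exponential bound), is the part requiring the most care.
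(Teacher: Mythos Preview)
Your plan for non-explosion and for the bound on $\E[\mathbf{Z}_{t\wedge\tau_N}(1+x^p)]$ (without the supremum) is correct and uses the right structural fact, namely $\theta^p+(1-\theta)^p\le 1$ for $p\ge 1$. The gap is in the passage to the supremum via Doob. You claim the bracket $\langle\mathbf{M}(\varphi_N)\rangle_t$ is bounded by $C\int_0^T u_s^N\,ds$ with $C$ independent of $N$, but the jump part of the bracket is
\[
\int_0^{t}\int_E r(x)\bigl(\varphi_N(\theta x)+\varphi_N((1-\theta)x)-\varphi_N(x)\bigr)^2\,\mathbf{Z}_s(dx)\,ds
\ \lesssim\ C_0\int_0^{t}\mathbf{Z}_s\bigl((1+x^p)^3\bigr)\,ds,
\]
since the squared increment is of order $(1+x^p)^2$ and $r(x)\le C_0(1+x^p)$. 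This involves a $3p$-th moment, not a $p$-th moment, so it does not close against $u_s^N=\E[\mathbf{Z}_{s\wedge\tau_N}(1+x^p)]$ without either assuming $\mathbf{z}_0(1+x^{3p})<\infty$ or letting the constant blow up with $N$. You flagged exactly this point as the ``main obstacle'' but did not resolve it.

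The paper sidesteps this entirely by working with the \emph{uncompensated} Poisson measure rather than the compensated martingale. Writing the SDE for $\mathbf{Z}_t(1+x^p)$ directly, each branching event changes it by $1+(\theta^p+(1-\theta)^p-1)x^p\le 1$ pathwise. Hence, with $\tau_n=\inf\{t:\mathbf{Z}_t(1+x^p)>n\}$,
\[
\sup_{u\le t\wedge\tau_n}\mathbf{Z}_u(1+x^p)\ \le\ \mathbf{Z}_0(1+x^p)+p\int_0^{t\wedge\tau_n}\sup_{u\le s\wedge\tau_n}\mathbf{Z}_u(1+x^p)\,ds+\#\{\text{jumps on }[0,t\wedge\tau_n]\},
\]
a pathwise inequality. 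Taking expectations replaces the jump count by its compensator $\int_0^{t\wedge\tau_n}\mathbf{Z}_s(r)\,ds\le C_0\int_0^{t\wedge\tau_n}\mathbf{Z}_s(1+x^p)\,ds$, and Gronwall closes immediately with constants depending only on $p$ and $C_0$. No Doob, no spatial truncation $\varphi_N$, no higher-moment bootstrap.
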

\begin{proof}
We have
\begin{align*}
\mathbf{Z}_t (f) &= \mathbf{Z}_0 (f) + \int_0^t \int_E f'(x) \ \mathbf{Z}_s (dx) \ ds \\
&+ \int_0^t \int_{\mathcal{U} \times \R_{+} \times [0,1]} \1_{\{u \in V_{s-}, l \leq r(X^u_{s-})\}} f ( \theta X^u_{s-}) + f ( (1-\theta) X^u_{s-})  - f (X^u_{s-})) \  \rho (d s, d u, d l , d \theta )
\end{align*}
Using the same argument to \cite[Theorem 3.1]{FM}, we introduce $\tau_n=\inf \{ \ t\geq 0 \ | \ \mathbf{Z}_t(1+x^p) > n \ \}$; and we have 
\begin{align*}
\sup_{u \in [0, t \wedge \tau_n]} \mathbf{Z}_u (1+x^p) 
\leq &\mathbf{Z}_0 (1+x^p) +  \int_0^{t \wedge \tau_n}  \mathbf{Z}_s (p x^{p-1}) ds \\
+ &\int_0^{t \wedge \tau_n} \int_{\mathcal{U} \times \R_{+} \times [0,1]} \1_{u \in V_{s-}, l \leq r(X^u_{s-})}  \\
 &(1 + ( \theta^p  + (1-\theta)^p-1) (X^u_{s-})^p) \  \rho (d s, d u, d l , d \theta )\\
\leq &\mathbf{Z}_0 (1+x^p) +  \int_0^{t \wedge \tau_n}  p \times \sup_{u \in [0, s \wedge \tau_n] } \mathbf{Z}_u (1+x^p) ds \\
+ &\int_0^t \int_{\mathcal{U} \times \R_{+} \times [0,1]} \1_{\{u \in V_{s-}, l \leq r(X^u_{s-})\}} \  \rho (d s, d u, d l , d \theta ),
\end{align*}
because $( \theta^p  + (1-\theta)^p-1)\leq 0$. Thus there exist $C>0$ such that
\begin{align*}
\E\left[\sup_{u \in [0, t \wedge \tau_n] } \mathbf{Z}_u (1+x^p) \right]
&\leq \mathbf{z}_0 (1+x^p) +  \int_0^{t}  C \ \E \left[ \sup_{u \in [0, s \wedge \tau_n] } \mathbf{Z}_u (1+x^p) \right] ds. \\
\end{align*}
Finally, the Gronwall Lemma implies the existence of $C_p$ such that
$$
\E\left[\sup_{s \in [0, t \wedge \tau_n] } \mathbf{Z}_s (1+x^p)\right] \leq \mathbf{z}_0 (1+x^p) e^{C_p t}.
$$
We deduce that $\tau_n$ tends almost surely to infinity and that there is non explosion.
\end{proof}

\subsection{Equal mitosis : long time behaviour}
\label{mito}
In this subsection, we establish the long time behaviour of $\mathbf{Z}$. We assume that
$$
\forall x\geq0, \forall\theta\in [0,1], \  F_1^{(2)} (x, \theta) =  F_2^{(2)} (x, \theta) = \frac{x}{2}.
$$
That is, the cells divide in two equal parts. In order to give a many-to-one formula, we recall a theorem of \cite{Pert2}:

\begin{theo}[Sufficient condition for the existence of eigenelement]
\label{vp2}
If there exist $\underline{r}, \bar{r}>0$ such that 
$$
\underline{r} \leq r \leq \bar{r},
$$
$r$ is continuous and $r(x)$ is constant equal to $r_\infty$ for $x$ large enough, then there exists $V\in C^1(\R_+)$ such that
$$
c(1+x^k) \leq V(x) \leq C (1+x^k),
$$
where $C,c$ are two constant and $2^k = \frac{2 r_\infty}{\lambda_0 + r_\infty}$.
\end{theo}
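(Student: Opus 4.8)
This result is established in \cite{Pert2}; we only outline the mechanism, since it accounts for the value of $k$. With the notation \eqref{notmit1}--\eqref{notmit2} and equal mitosis one has $\mathcal{G}f(x)=f'(x)+r(x)\bigl(2f(x/2)-f(x)\bigr)$, so the eigenrelation $\mathcal{G}V=\lambda_0 V$ is the linear functional--differential (pantograph-type) equation
\begin{equation}
\label{eq:eigenmitosis}
V'(x)=\bigl(\lambda_0+r(x)\bigr)V(x)-2r(x)V(x/2),\qquad x\ge 0,
\end{equation}
to be solved with $V>0$, $V\in C^1(\R_+)$ and, say, $V(0)=1$. The plan is to produce the pair $(\lambda_0,V)$ by a Perron--Frobenius / Krein--Rutman argument applied to a truncation of $\mathcal{G}$ on a bounded interval, and then to pass to the limit, the growth exponent $k$ being dictated by the form of \eqref{eq:eigenmitosis} on the zone where $r\equiv r_\infty$.

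The exponent appears as follows. In the tail, testing \eqref{eq:eigenmitosis} against $V(x)=x^{\alpha}$ gives $\alpha x^{\alpha-1}=\bigl[(\lambda_0+r_\infty)-2^{1-\alpha}r_\infty\bigr]x^{\alpha}$, so the only power-law compatible with \eqref{eq:eigenmitosis} to leading order is the one annihilating the bracket, namely $2^{1-\alpha}r_\infty=\lambda_0+r_\infty$, i.e. $2^{k}=2r_\infty/(\lambda_0+r_\infty)$. Computing $\mathcal{G}(x^{k})-\lambda_0 x^{k}$ and $\mathcal{G}(x^{k-1})-\lambda_0 x^{k-1}$ on the same zone, one checks that suitable perturbations of $x^{k}$ by lower-order powers, completed by constants near the origin (where $\underline r\le r\le\bar r$ is used), furnish a super- and a sub-solution of \eqref{eq:eigenmitosis}; via the comparison principle for \eqref{eq:eigenmitosis} these produce the two-sided estimate $c(1+x^{k})\le V\le C(1+x^{k})$ of the statement, once $\lambda_0$ is known.

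Concretely I would proceed in three steps. (i) For each $R>0$, solve the principal eigenproblem for a truncation $\mathcal{G}_R$ of $\mathcal{G}$ on $(0,R)$: using $\underline r\le r\le\bar r$ and the smoothing effect of the transport term combined with the fragmentation, the associated positive semigroup is irreducible with compact resolvent, so Krein--Rutman (or a Perron--Frobenius argument) yields a simple eigenvalue $\lambda_0^{R}$ and a positive eigenfunction $V^{R}\in C^{1}$. (ii) With the barriers of the previous paragraph and the comparison principle, derive bounds $c(1+x^{k_R})\le V^{R}(x)\le C(1+x^{k_R})$ and $0<\lambda_0^{R}\le\bar r$ that are \emph{uniform in $R$}, where $k_R$ is attached to $\lambda_0^{R}$ through the resonance identity above. (iii) Pass to the limit $R\to+\infty$: the uniform bounds together with \eqref{eq:eigenmitosis} (which controls $(V^{R})'$) give, by Arzel\`a--Ascoli and a diagonal extraction, a limit $(\lambda_0,V)$ still solving \eqref{eq:eigenmitosis}, with $V>0$ by the lower bound; then $\lambda_0$ fixes $k$ through $2^{k}=2r_\infty/(\lambda_0+r_\infty)$, and the same barriers give the asymptotic bounds for $V$.

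The main obstacle is the intrinsic coupling between $\lambda_0$ and the weight $1+x^{k}$: the eigenproblem cannot be set up in a fixed weighted space from the outset, since the correct weight is known only once $\lambda_0$ is. This is exactly why the argument factors through a truncation for which Krein--Rutman applies with no weight, and the delicate point is to make the polynomial control of $V^{R}$ and the control of $\lambda_0^{R}$ uniform in $R$ — establishing these uniform a priori estimates, through well-chosen super/sub-solutions and the maximum principle for \eqref{eq:eigenmitosis}, is the heart of the proof. A more hands-on alternative avoids Krein--Rutman: by the method of characteristics, \eqref{eq:eigenmitosis} determines $V$ on $[a,2a]$ from its values on $[0,a]$, and on $[0,a]$ itself by a contraction for $a$ small, so one integrates outward from $x=0$ with $\lambda_0$ as a shooting parameter, tuned so that the resulting solution stays positive and grows exactly like $x^{k}$; positivity and $C^{1}$ regularity are then built into the construction. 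Either route gives the statement quoted from \cite{Pert2}.
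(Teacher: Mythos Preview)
The paper does not prove this statement at all: it is introduced with ``we recall a theorem of \cite{Pert2}'' and is quoted without proof, serving only as input to the many-to-one formula and to Corollary~\ref{cvmitointro}. So there is nothing in the paper to compare your argument against; your sketch already goes well beyond what the paper provides.

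That said, your outline is a faithful summary of the strategy actually used in the growth--fragmentation literature (Perthame and coauthors): a Krein--Rutman argument on a truncated problem, uniform a priori bounds via super/sub-solutions, and passage to the limit, with the exponent $k$ fixed by the balance $2^{k}=2r_\infty/(\lambda_0+r_\infty)$ that you derive correctly from the pantograph equation in the tail. The honest caveat you raise --- that the weight depends on $\lambda_0$, which is itself part of the unknown --- is precisely the technical difficulty, and your proposal handles it the standard way. For the purposes of this paper, simply citing \cite{Pert2} (and, for related criteria, \cite{vpmito,valprop}) is all that is required; your sketch would be appropriate as motivation but is not a substitute for the full proof, which lives in those references.
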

So, we get a many-to-one formula with an auxiliary process generated by

\begin{equation*}
Af(x)= f'(x) + r(x) \frac{2 V(x/2)}{V(x)} \left(  f(x/2) - f(x) \right).
\end{equation*}

Our main result gives the two following limit theorems.

\begin{coro}[Convergence of the empirical measure for a mitosis model ]
\label{cvmitointro}
If there exist $\underline{r}, \bar{r}>0$ such that 
$$
 \underline{r} \leq r \leq \bar{r},
$$
$r$ is continuous and $r(x)$ is constant equal to $\bar{r}$ for $x$ large enough, then there exists a probability measure $\pi$ such that, for any continuous and bounded function $g$, we have
$$
\lim_{t \rightarrow + \infty} \frac{1}{N_t} \sum_{u \in \mathcal{V}_t} g(X^u_t) = \int g \ d\pi \text{ in probability}.
$$
In particular for a constant rate $r$, $\pi$ has Lebesgue density:
\begin{equation}
\label{tcpcst}
 x \mapsto \frac{ 2r}{\prod_{n=1}^{ + \infty} (1-2^{-n})} \sum_{n=0}^{+\infty}\left( \prod_{k=1}^{n} \frac{2}{1-2^k}\right) e^{-2^{n+1}  r x}.
\end{equation}
\end{coro}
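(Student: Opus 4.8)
The plan is to verify that the hypotheses of Theorem \ref{thintro} (equivalently, of the general convergence Theorem \ref{Gencv}) are met in the equal-mitosis setting, and then to compute the invariant density $\pi$ explicitly in the constant-rate case. First I would invoke Theorem \ref{vp2}: since $\underline r\le r\le\bar r$, $r$ continuous and eventually constant equal to $\bar r$, there exist $\lambda_0>0$ and $V\in C^1(\R_+)$ with $c(1+x^k)\le V(x)\le C(1+x^k)$ satisfying $\mathcal G V=\lambda_0 V$, $V>0$. Lemma \ref{lem:rkborne} (with $\bar k=2$) gives non-explosion, so Assumption \ref{hyp:vp} holds. The auxiliary process $Y$ is the one-dimensional jump-diffusion with generator $Af(x)=f'(x)+r(x)\frac{2V(x/2)}{V(x)}\bigl(f(x/2)-f(x)\bigr)$; I would check it is ergodic by a Lyapunov/Foster–Lyapunov argument: the drift pushes $x$ up at unit speed while each jump multiplies the state by $1/2$, and the effective jump rate $r(x)\,2V(x/2)/V(x)$ stays bounded between two positive constants (using the two-sided bound on $V$ and $r$), so a test function like $x\mapsto x$ or $e^{\varepsilon x}$ yields a drift condition giving a unique invariant law $\pi$ together with the moment bound $\E[V^2(Y_t)]\le Ce^{\alpha t}$ for some $\alpha<\lambda_0$ (indeed one expects exponential or even faster decay, so any $\alpha<\lambda_0$ works). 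Since $V$ is bounded below by a positive constant, the second ("$N_t$") conclusion of Theorem \ref{thintro} applies once the fork-integrability bound is also checked.

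Next I would verify the remaining hypothesis of Theorem \ref{thintro}: the bound on
$$
\E\left[\frac{r(Y_t)}{V(Y_t)}\int_0^1\sum_{\overset{a,b\in\N^*}{a\neq b}}\sum_{k\geq\max(a,b)}p_k(Y_s)\,V\!\left(F_a^{(k)}(Y_s,\theta)\right)V\!\left(F_b^{(k)}(Y_s,\theta)\right)d\theta\right]\leq Ce^{\alpha t}.
$$
Here $p_k$ is supported on $k=2$ and $F_1^{(2)}(x,\theta)=F_2^{(2)}(x,\theta)=x/2$, so the inner sum collapses to $2\,V(x/2)^2$, and using $V(x)\le C(1+x^k)$, $r$ bounded and $1/V$ bounded, the expression is controlled by $C\,\E[(1+(Y_s/2)^k)^2]$, which grows at most polynomially in time (from the unit drift of $Y$) and hence is $\le Ce^{\alpha t}$ for any $\alpha>0$; choosing $\alpha<\lambda_0$ is fine. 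Applying Theorem \ref{thintro} with $g$ continuous bounded (so $f=g/V$ is continuous bounded) and then with $g\equiv1$, $f=1/V$, I get
$$
\lim_{t\to+\infty}\frac{\1_{W\neq0}}{N_t}\sum_{u\in\mathcal V_t}g(X^u_t)=\1_{W\neq0}\,\frac{\int (g/V)\,d\pi}{\int(1/V)\,d\pi}\quad\text{in probability}.
$$
Since $\lambda_0>0$ and the population is supercritical, $W>0$ a.s.\ on a non-trivial event; more to the point, one can absorb $\1_{W\neq0}$ by noting $\p(W=0)$ can be excluded here (or restate with the indicator as in the theorem), and define the limiting probability measure $\pi'$ by $\pi'(dx)=V(x)^{-1}\pi(dx)/\int V^{-1}d\pi$, which is the $\pi$ in the statement.

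The final, most computational step is the explicit formula \eqref{tcpcst} when $r$ is constant. With $r$ constant, one checks $V\equiv1$ works? — no: $\mathcal G 1 = r(2\cdot1-1)=r\neq 0$, so actually $V\equiv1$ is an eigenvector with $\lambda_0=r$, and then the auxiliary generator simplifies to $Af(x)=f'(x)+r(f(x/2)-f(x))$, the classical TCP process. Its invariant density $\nu$ solves the stationary forward equation $-\nu'(x)-r\nu(x)+2r\,\nu(2x)=0$ (the factor $2$ from the Jacobian of $x\mapsto x/2$). I would solve this by taking Laplace transforms: writing $\widehat\nu(s)=\int_0^\infty e^{-sx}\nu(x)dx$, the equation becomes a functional equation $\widehat\nu(s)=\frac{r}{r+s}\widehat\nu(s/2)$ (after handling boundary terms), which iterates to $\widehat\nu(s)=\prod_{n\ge0}\frac{r}{r+s/2^n}=\prod_{n\ge0}\frac{1}{1+s/(2^n r)}$; expanding this infinite product into partial fractions in $s$ and inverting term by term produces exactly the series $x\mapsto\frac{2r}{\prod_{n\ge1}(1-2^{-n})}\sum_{n\ge0}\bigl(\prod_{k=1}^n\frac{2}{1-2^k}\bigr)e^{-2^{n+1}rx}$ (the normalization constant being fixed by $\widehat\nu(0)=1$). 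Finally, since $V\equiv1$ here, $\pi=\nu$ and no reweighting is needed. The main obstacle I anticipate is not any single step but assembling the ergodicity-plus-moment bounds for the jump-diffusion $Y$ cleanly under only the stated hypotheses on $r$ (two-sided bounded, continuous, eventually constant); once ergodicity with exponential convergence and the polynomial moment control are in hand, everything else is either a direct citation of Theorem \ref{thintro} or the Laplace-transform computation above.
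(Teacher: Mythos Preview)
Your overall strategy coincides with the paper's own (very brief) proof: invoke Theorem~\ref{vp2} for the eigenelements, note that the two-sided bounds on $V$ make the effective jump rate $r(x)\,2V(x/2)/V(x)$ bounded above and below, deduce ergodicity of the auxiliary process by a Foster--Lyapunov argument (the paper suggests $x\mapsto 1+x$ and cites Meyn--Tweedie), and then apply Theorem~\ref{thintro}. For the explicit density in the constant-rate case the paper simply cites \cite{Ott2}, whereas you attempt a direct Laplace-transform derivation; that is a legitimate alternative and is essentially what the cited reference does.

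There is, however, a genuine slip in your constant-rate computation. When $r$ is constant and $V\equiv 1$, the auxiliary generator is not $Af(x)=f'(x)+r\bigl(f(x/2)-f(x)\bigr)$ but
\[
Af(x)=f'(x)+2r\bigl(f(x/2)-f(x)\bigr),
\]
as displayed just before the corollary (set $V\equiv1$ in $r(x)\tfrac{2V(x/2)}{V(x)}$) and again in Lemma~\ref{moments}. With your jump rate $r$, the Laplace functional equation $\widehat\nu(s)=\frac{r}{r+s}\,\widehat\nu(s/2)$ has poles at $s=-2^n r$, producing a density in the exponentials $e^{-2^n r x}$; this does \emph{not} match~\eqref{tcpcst}, whose exponents are $2^{n+1}r$. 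Replacing $r$ by $2r$ throughout fixes it: the stationary forward equation becomes $-\nu'(x)-2r\,\nu(x)+4r\,\nu(2x)=0$, the functional equation becomes $\widehat\nu(s)=\frac{2r}{2r+s}\,\widehat\nu(s/2)$, the poles move to $s=-2^{n+1}r$, and the partial-fraction inversion then yields exactly~\eqref{tcpcst}. Aside from this factor-of-two error, your argument is sound and parallels the paper.
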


This explicit formula \eqref{tcpcst} is not new \cite{Pert1,Pert2}, but here, the empirical measure convergences in probability, while in the mentioned papers, the mean measure or the macroscopic process converges (see Theorem \ref{th:Grandpopintro}).

\begin{proof}[Proof of corollary \ref{cvmitointro}]
By Theorem \ref{vp2}, the mapping $x\mapsto V(x/2)/V(x)$ is upper and lower bounded. Thus, the auxiliary process is ergodic and admits a unique invariant law, as can be checked using a suitable Foster-Lyapunov function \cite[Theorem 6.1]{MT} (for instance, we can use $x\mapsto 1 + x$). See also \cite{GK}. Finally, we use Theorem \ref{thintro} to conclude. The explicit formula is an application of \cite{Ott2}.
\end{proof}

We can see that the assumptions of Theorem \ref{vp2} are strong, and not necessary:
\begin{coro}[Convergence of the empirical measure when $r$ is affine]
\label{cvmito}
If 
$$
 \forall x\geq 0, r(x)= a x + b,
$$ 
where $a,b \geq 0$ and $a$ or $b$ is positive then there exists a measure $\pi$ such that
$$
\lim_{t \rightarrow + \infty} \frac{1}{N_t} \sum_{u \in \mathcal{V}_t} g(X^u_t) = \int g \ d\pi.
$$
The convergence holds in probability and for any continuous function $g$ on $E$ such that $\forall x \in E, \ |g(x)| \leq C (1 +x )$.
\end{coro}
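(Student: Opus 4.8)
The plan is to deduce Corollary \ref{cvmito} from Theorem \ref{thintro} in the same way as Corollary \ref{cvmitointro}, the only new work being an explicit eigenelement and the ergodic and moment estimates for the corresponding auxiliary process. We are in the equal mitosis model \eqref{notmit1}--\eqref{notmit2} with $F_1^{(2)}(x,\theta)=F_2^{(2)}(x,\theta)=x/2$, $\mathbf{Z}_0=\delta_{x_0}$, so that
$$
\mathcal{G}f(x)=f'(x)+r(x)\bigl(2f(x/2)-f(x)\bigr),\qquad r(x)=ax+b .
$$
Since $r(x)\le(a+b)(1+x)$ and $\mathbf{z}_0(1+x^2)=1+x_0^2<+\infty$, Lemma \ref{CS} (applied with $p=1$ and with $p=2$) gives Assumption \ref{hyp:nonexplo} together with $\E[\sup_{s\le T}\mathbf{Z}_s(1+x^p)]<+\infty$ for $p\in\{1,2\}$, which also makes $V$ and $V^2$ below admissible test functions in Corollary \ref{EDP1} and Lemma \ref{semimart}. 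Looking for $V(x)=\kappa x+1$ and using $GV=\kappa$, $2V(x/2)-V(x)=1$, one gets $\mathcal{G}V(x)=\kappa+ax+b$, hence $\mathcal{G}V=\lambda_0 V$ precisely when $\kappa$ is the nonnegative root of $\kappa^2+b\kappa-a=0$, i.e. $\kappa=(-b+\sqrt{b^2+4a})/2$, with $\lambda_0=\kappa+b$. When $a=0$ this forces $\kappa=0$, $V\equiv1$: that is the constant‑rate case already covered by Corollary \ref{cvmitointro} (for $g$ of linear growth one concludes there from the polynomial moment bounds of the TCP process). So assume henceforth $a>0$; then $\kappa>0$, $\lambda_0>0$, $V>0$ on $E$, and Assumption \ref{hyp:vp} holds.

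Plugging $\sigma=0$, $p_2=1$ and $F_1^{(2)}=F_2^{(2)}=\cdot/2$ into the definitions of $M$, $J$, $\Lambda$ of Section \ref{longtime}, the branching sum equals $2V(x/2)$, so $Mf=f'$ and the auxiliary process $Y$ has generator
$$
Af(x)=f'(x)+\lambda(x)\bigl(f(x/2)-f(x)\bigr),\qquad \lambda(x):=r(x)\,\frac{\kappa x+2}{\kappa x+1},\quad r(x)\le\lambda(x)\le 2r(x),
$$
i.e. $Y$ drifts upward at unit speed and jumps from $x$ to $x/2$ at the linearly growing rate $\lambda(x)$. In particular $Y_t\le x_0+t$, so $Y$ does not explode and Dynkin's formula holds for functions of at most quadratic growth. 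Apply the Foster--Lyapunov criterion \cite[Theorem 6.1]{MT}: with $W_1(x)=1+x$ one has $AW_1(x)=1-\tfrac{x}{2}\lambda(x)$, and with $W_2(x)=1+x^2$ one has $AW_2(x)=2x-\tfrac{3x^2}{4}\lambda(x)$; since $\lambda(x)\ge r(x)=ax+b$ with $a>0$, both tend to $-\infty$, so for $R$ large $AW_i\le -c_iW_i+C_i\1_{[0,R]}$ ($i=1,2$). Compact sets are petite because a single jump brings the process into a fixed neighbourhood of $0$ and the deterministic flow then sweeps $E$, giving irreducibility and aperiodicity. Hence $Y$ is positive Harris recurrent with a unique invariant probability measure $\pi$, and by Dynkin $\sup_{t\ge0}\E_x[1+Y_t+Y_t^2]\le C(1+x+x^2)$ uniformly in $x\in E$; in particular $\sup_{t\ge0}\E[V^2(Y_t)]<+\infty$.

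It remains to verify the hypotheses of Theorem \ref{thintro} for a continuous $g$ with $|g(x)|\le C(1+x)$. Since $V(x)=\kappa x+1\ge\min(\kappa,1)(1+x)$, we have $|g|\le C'V$. Take the exponent in Theorem \ref{thintro} to be $0<\lambda_0$: the bound $\E[V^2(Y_t)]\le Ce^{0\cdot t}$ is the previous paragraph; and since only $p_2=1$ contributes with $F_1^{(2)}=F_2^{(2)}=\cdot/2$, the inner double sum there equals $2V(x/2)^2$, so the second quantity to control is $2\E\bigl[\tfrac{r(Y_t)}{V(Y_t)}V(Y_t/2)^2\bigr]$; as $\tfrac{r(x)}{V(x)}V(x/2)^2=\tfrac{(ax+b)(\kappa x/2+1)^2}{\kappa x+1}\le C(1+x^2)\le C'V^2(x)$, this too is bounded by $C''\sup_t\E[V^2(Y_t)]<+\infty$. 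Theorem \ref{thintro} then gives $e^{-\lambda_0 t}\sum_{u\in\mathcal{V}_t}g(X^u_t)\to W\int(g/V)\,d\pi$ in probability, and, since $V\ge1$ is bounded below, also $\frac{\1_{W\neq0}}{N_t}\sum_{u\in\mathcal{V}_t}g(X^u_t)\to\1_{W\neq0}\,\bigl(\int g/V\,d\pi\bigr)\big/\bigl(\int 1/V\,d\pi\bigr)$ in probability. The same second‑moment computation (Lemma \ref{3.9} with $f=g=\1$ and $P_s\1=\1$, so $J_2(V,V)(x)=2V(x/2)^2$) shows that the martingale $(\mathbf{Z}_t(V)e^{-\lambda_0 t})$ is bounded in $L^2$; hence $\E[W]=V(x_0)>0$, and combining this with the facts that the population is non‑decreasing (each division turns one cell into two, so the tree is a.s.\ infinite) and that cell sizes stay tight along the tree (large cells divide almost instantly, so infinitely many cells are born in a fixed compact), a Paley--Zygmund estimate on that compact propagated along the tree forces $\p(W=0)=0$. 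Removing the indicator yields the statement, with the limiting probability $\pi$ there equal to $\bigl(\int\cdot/V\,d\pi\bigr)\big/\bigl(\int1/V\,d\pi\bigr)$.

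\emph{Main obstacle.} Everything apart from the following is a direct specialization of Section \ref{longtime} mirroring Corollary \ref{cvmitointro}. The real work is (i) the quantitative ergodic theory of the PDMP $Y$ whose jump rate is \emph{unbounded}: the two drift inequalities, the petiteness of compacts, and, crucially, the quadratic moment bound that is \emph{uniform in time} (and in the starting point) required by Theorem \ref{thintro}; and (ii) the almost sure strict positivity of $W$, needed to discard $\1_{W\neq0}$, which relies on the $L^2$‑boundedness of the intrinsic martingale together with a recurrence‑type control of the cell sizes along the genealogical tree.
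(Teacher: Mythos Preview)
Your approach is exactly the paper's: exhibit the affine eigenvector $V(x)=\kappa x+1$ with $\kappa=(\sqrt{b^2+4a}-b)/2$ and invoke Theorem \ref{thintro}. The paper's own proof is a single sentence and does not check the ergodicity, moment, or $J_2$ hypotheses that you carefully verify; in particular it does not address why the indicator $\1_{W\neq0}$ coming from Theorem \ref{thintro} may be dropped, so your attempt to prove $\p(W=0)=0$ via the $L^2$-boundedness of the intrinsic martingale and a tree-propagation argument actually goes beyond what the paper supplies (though, as you flag, that last step is the one place where your argument remains heuristic rather than fully rigorous).
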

\begin{proof}
If $r(x)= a x + b $ then $V(x)= x \frac{\sqrt{b^2 +4 a}- b}{2} +1$ is an eigenvector and $\frac{2 a}{\sqrt{b^2 + 4 a} -b }$ is its corresponding eigenvalue. Henceforth, this result is a direct application of Theorem \ref{thintro}
\end{proof}

\begin{Rq}[Malthus parameter]
We also deduce that
$$
\lim_{t \rightarrow + \infty} N_t \ e^{- \lambda_0 t} = W \int_E \frac{1}{V} \ d\pi,
$$
where $\lambda_0 = \frac{2 a}{\sqrt{b^2 + 4 a} -b }$ is the Malthus parameter (see Remark \ref{Malthus}).
\end{Rq}

\begin{Rq}[Estimation of $r$ for the Escherichia coli cell]
We can find some estimates of the division rate in the literature. An inverse problem was developed and applied with experimental data in \cite{DMZ10}(see also \cite{K69}). More recently, \cite{DHRR11} gives a nonparametric estimation of the division rate.
\end{Rq}

\subsection{Homogeneous case: moment and rate of convergence}
When $r$ is constant, the process is easier to study since the auxiliary process has already been studied \cite{TCP,lcst,Ott2}. Here, we give the moments and a first approach to estimate the rate of convergence.

\begin{lem}[Moments of the empirical measure]
\label{moments}
For all $m \in \N$, and $t \geq 0$, we have
\begin{align*}
\E[ \mathbf{Z}_t (x^m) ] 
&= \E\left[ \sum_{u \in \mathcal{V}_t} (X^u_t)^m \right] \\
&= \int_0^{+ \infty} e^{rt} \left[ \frac{m!}{\prod_{i=1}^m \theta_i} + m! \sum_{i= 1}^m \left( \sum_{k=0}^i \frac{x^k}{k!} \prod_{j=k , j \neq i}^m \frac{1}{\theta_j - \theta_i} \right)  e^{- \theta_i t} \right] \ \mathbf{z}_0 (dx),
\end{align*}
where $\theta_i = 2r \left( 1 - 2^{-i} \right)$. In particular,
\begin{align*}
\E[ \mathbf{Z}_t (x) ] =  \E\left[ \sum_{u \in \mathcal{V}_t^\mu} X^u_t \right] 
&= e^{rt} \int_0^{+ \infty} \frac{1}{r} - \left( \frac{1}{r} - x \right) e^{-r t} \ \mathbf{z}_0 (dx),
\end{align*}
and
\begin{align*}
\E[ \mathbf{Z}_t (x^2) ] 
&= \E\left[ \sum_{u \in \mathcal{V}_t} (X^u_t)^2 \right] \\
&= e^{rt} \int_0^{+ \infty} \frac{4}{3 r^2} + 2 \left[ e^{-rt} \left( \frac{-2}{r^2} + \frac{2 x}{r} \right) + e^{-3 rt/2} \left( \frac{4}{3 r^2} - \frac{2 x}{3 r} + \frac{x^2}{2} \right) \right] \mathbf{z}_0 (dx).\\
\end{align*}
\end{lem}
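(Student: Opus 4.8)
The plan is to apply the many-to-one formula of Lemma \ref{lem:WMTO} with the simplest possible eigenelement. Since $r$ is constant and $p_2=1$, the operator $\mathcal{G}$ acts as $\mathcal{G}f(x) = f'(x) + r(2f(x/2) - f(x))$, and one checks directly that $V\equiv 1$ is an eigenvector with eigenvalue $\lambda_0 = r$ (indeed $\mathcal{G}1 = 2r - r = r$). Consequently $\E[\mathbf{Z}_t(V)] = \mathbf{z}_0(1)e^{rt}$ and the auxiliary generator is $Af(x) = f'(x) + r(f(x/2) - f(x))$. The weighted many-to-one formula then reduces, for the test function $f(x)=x^m$, to
$$
\E[\mathbf{Z}_t(x^m)] = e^{rt}\,\E_{x}\!\left[(Y_t)^m\right]\text{ integrated against }\mathbf{z}_0(dx),
$$
so the whole problem collapses to computing the $m$-th moment of the TCP-type process $Y$ started at $x$.

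The key step is therefore to solve for $h_m(t,x) := \E_x[(Y_t)^m] = P_t(x^m)(x)$. I would set up the backward equation $\partial_t h_m = A h_m$, i.e.
$$
\partial_t h_m(t,x) = \partial_x h_m(t,x) + r\big(h_m(t,x/2) - h_m(t,x)\big),\qquad h_m(0,x)=x^m.
$$
Because $A$ maps polynomials of degree $\leq m$ into polynomials of degree $\leq m$ (the term $h_m(t,x/2)$ only rescales coefficients and $\partial_x$ lowers degree), one can look for $h_m(t,x) = \sum_{k=0}^m c_k(t)\,x^k/k!$ and obtain a closed triangular linear ODE system for the $c_k$. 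The diagonal coefficient acting on the degree-$k$ part is exactly $r(2^{-k}-1) = -\theta_k/1$ with $\theta_k = 2r(1-2^{-k})$ — note $\theta_0 = 0$, which accounts for the non-decaying constant term $m!/\prod_{i=1}^m\theta_i$ (the stationary $m$-th moment of $Y$). Solving the triangular system by variation of constants, the eigenvalues $0, -\theta_1, \dots, -\theta_m$ are distinct, so $h_m(t,x)$ is a sum of $e^{-\theta_i t}$ terms with rational-in-$r$ coefficients; a partial-fraction bookkeeping of the products $\prod_{j}(\theta_j-\theta_i)^{-1}$ yields precisely the bracketed expression in the statement. Multiplying by $e^{rt}$ and integrating against $\mathbf{z}_0$ finishes the general formula.

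The two special cases $m=1$ and $m=2$ I would simply verify by direct substitution into the ODE system (for $m=1$: $c_1' = -\theta_1 c_1$ with $\theta_1 = r$, giving $c_1(t)=x e^{-rt}$, and $c_0' = r c_1 \cdot(\text{coefficient})$; solving gives $1/r - (1/r-x)e^{-rt}$ after matching $h_1(0,x)=x$), rather than specializing the general closed form, since that is less error-prone. The main obstacle is purely computational: carrying out the partial-fraction decomposition of the iterated products $\prod_{j=k,\,j\neq i}^m (\theta_j-\theta_i)^{-1}$ and checking that the resulting coefficients match the claimed formula, including the correct treatment of the $\theta_0=0$ mode; there is no conceptual difficulty once the many-to-one reduction and the triangular structure of $A$ on polynomials are in place. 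One should also note that Lemma \ref{CS} (with $p=m$, using that $r$ is bounded hence certainly $\leq C_0(1+x^m)$) guarantees $\mathbf{Z}_t(x^m)$ is finite and the manipulations are legitimate.
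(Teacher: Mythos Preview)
Your overall strategy coincides with the paper's: take $V\equiv 1$ as eigenvector with eigenvalue $\lambda_0=r$, apply the weighted many-to-one formula (Lemma~\ref{lem:WMTO}) to reduce $\E[\mathbf{Z}_t(x^m)]$ to $e^{rt}$ times the $m$-th moment of the auxiliary TCP process, and then compute those moments. The paper simply cites \cite[Theorem~4]{lcst} for this last step, whereas you propose to redo the triangular ODE computation by hand; that is fine and arguably more self-contained.

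However, there is a concrete slip in your identification of the auxiliary generator. With $V\equiv 1$ one has
\[
Af \;=\; \mathcal{G}f - \lambda_0 f \;=\; f' + r\bigl(2f(x/2)-f(x)\bigr) - r f \;=\; f' + 2r\bigl(f(x/2)-f(x)\bigr),
\]
i.e.\ the jump rate of $Y$ is $2r$, not $r$ (this is the size-biasing: the auxiliary individual jumps at rate $rm=2r$). With your generator $f'+r(f(x/2)-f(x))$ the diagonal action on $x^k$ is $r(2^{-k}-1)$, which equals $-\tfrac12\theta_k$, not $-\theta_k$; your sentence ``$r(2^{-k}-1)=-\theta_k$ with $\theta_k=2r(1-2^{-k})$'' is therefore internally inconsistent by a factor of two. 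Once you correct the generator to $A f = f' + 2r(f(x/2)-f(x))$, the diagonal eigenvalues become exactly $-\theta_k$ and your triangular-system argument goes through verbatim, yielding the stated formula. The rest of the proposal (polynomial invariance, distinctness of the $\theta_i$, the $\theta_0=0$ mode giving the stationary term, and the appeal to Lemma~\ref{CS} for integrability) is correct.
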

\begin{proof}
Since $r$ is constant, we have $\mathcal{G}\1= r\1$, where $\1$ is the constant mapping, which is equal to $1$. Lemma \ref{lem:WMTO} gives
$$
\frac{1}{\E[N_t]} \E\left[ \sum_{u\in \mathcal{V}_t} f(X^u_t) \right] = \E[f(Y_t)],
$$
where $Y$ is generated by
$$
A f(x)= f'(x) + 2r \left( f\left(\frac{x}{2} \right) - f(x) \right).
$$
Finally, we complete the proof using \cite[Theorem 4]{lcst}.
\end{proof}
Now, let us talk about the rate of convergence. To estimate the distance between two random measures, we will use the Wasserstein distance\cite{metric,Vil}:
\begin{defi}[Wasserstein distance]
 Let $\mu_1$ and $\mu_2$ two finite measures on a Polish space $(F,d_F)$, the Wasserstein distance between $\mu_1$ and $\mu_2$ is defined by
$$
W_{d_F} (\mu_1, \mu_2) = \inf \int_{F\times F} d_F (x_1,x_2) \Pi(dx_1,dx_2),
$$
where the infimum runs over all the measures $\Pi$ on $F \times F$ with marginals $\mu_1$ and $\mu_2$. In particular, if $\mu_1$ and $\mu_2$ are two probability measures, we have
$$
W_{d_F} (\mu_1, \mu_2) = \inf \E[d_F(X_1,X_2)],
$$
where the infimum runs over all two random variables $X_1,X_2$, which are distributed according to $\mu_1,\mu_2$.
\end{defi}
So, if $M_1, M_2$ are two random measures then
$$
W_d(\mathcal{L}(M_1), \mathcal{L}(M_2)) = \inf \ \E[ d(M_1,M_2) ],
$$
where the infimum is taken over all the couples of random variables $(M_1,M_2)$ such that $M_1 \sim \mathcal{L}(M_1)$ and $M_2 \sim \mathcal{L}(M_2)$, and $d$ is a distance on the measures space. Here, we consider $d=W_{|\cdot|}$. It is the Wasserstein distance on $(E, |\cdot|)$. We have 

\begin{theo}[Quantitative bounds]
\label{vitess}
If $r$ is constant, then we have, for all t $\geq 0$,
\begin{align*}
& W_{W_{|\cdot|}}  \left(\mathcal{L}\left(\frac{\mathbf{Z}^{x}_t}{\E[N_t]}\right) ,\mathcal{L}\left(\frac{\mathbf{Z}^{y}_t}{\E[N_t]}\right)\right) \leq  |x-y| e^{-r t},\\
& W_{W_{|\cdot|}}  \left(\mathcal{L} \left(\frac{\mathbf{Z}^{x}_t}{ N_t}\right) ,\mathcal{L}\left(\frac{\mathbf{Z}^{y}_t}{N_t}\right)\right) \leq |x-y| \frac{rt e^{-rt}}{1- e^{-rt}},\\
\end{align*}
where $\mathbf{Z^x},\mathbf{Z^y}$ are distributed as $\mathbf{Z}$ and start from $\delta_x$ and $\delta_y$. 
\end{theo}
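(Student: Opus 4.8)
The plan is to prove both inequalities at once by building an explicit coupling of the two systems $\mathbf{Z}^x$ and $\mathbf{Z}^y$ on a common probability space and estimating the transport cost pathwise. Since $r$ is constant, the genealogical data --- the tree $\mathcal{T}$, the birth/death dates $(\alpha(u),\beta(u))_{u\in\mathcal{T}}$, the offspring numbers (here always $2$), and the fragmentation variables $\Theta$ attached to each division --- is generated independently of the spatial positions. I would therefore draw this data once, and run both families of characteristics $(X^{u,x})_{u\in\mathcal{T}}$ and $(X^{u,y})_{u\in\mathcal{T}}$ over the \emph{same} tree, started respectively from $x$ and $y$; because $Gf=f'$ (pure translation, no Brownian part) these positions are in fact deterministic functions of the genealogical data and of the initial point. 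This gives a bona fide element of the coupling set defining $W_{W_{|\cdot|}}$, and one has $\mathcal{V}^x_t=\mathcal{V}^y_t=\mathcal{V}_t$, so the two empirical measures always carry the same total mass $N_t$.

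Next I would track the lineagewise discrepancy $D^u_t=|X^{u,x}_t-X^{u,y}_t|$ for $u\in\mathcal{V}_t$ and show that the total discrepancy is conserved: $\sum_{u\in\mathcal{V}_t}D^u_t=|x-y|$ for all $t\ge0$, almost surely. Indeed, between two consecutive division events $\mathcal{V}_t$ does not change and each $D^u_t$ is constant (both copies translate at unit speed); and at a division in which $u$ splits into two children with relative sizes $\rho=F^{-1}(\Theta)\in[0,1]$ and $1-\rho$ (so $\rho=\tfrac12$ in the equal mitosis case), one has $D^{u1}_t=\rho\,D^u_{t-}$ and $D^{u2}_t=(1-\rho)\,D^u_{t-}$, hence the removed contribution $D^u_{t-}$ is replaced by $\rho D^u_{t-}+(1-\rho)D^u_{t-}=D^u_{t-}$. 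Thus $t\mapsto\sum_{u\in\mathcal{V}_t}D^u_t$ is constant, equal to its value at $t=0$, namely $D^\emptyset_0=|x-y|$.

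Then I would bound the inner Wasserstein distances by the diagonal transport plan $\Pi_t=\tfrac1{c_t}\sum_{u\in\mathcal{V}_t}\delta_{(X^{u,x}_t,\,X^{u,y}_t)}$, which has marginals $\mathbf{Z}^x_t/c_t$ and $\mathbf{Z}^y_t/c_t$ (same total mass $N_t/c_t$), for any positive, possibly random normalisation $c_t$. This gives $W_{|\cdot|}(\mathbf{Z}^x_t/c_t,\mathbf{Z}^y_t/c_t)\le \tfrac1{c_t}\sum_{u\in\mathcal{V}_t}D^u_t=|x-y|/c_t$ pathwise. Taking $c_t=\E[N_t]$ and recalling that here $V\equiv1$, $\lambda_0=r(m-1)=r$, whence $\E[N_t]=e^{rt}$, the right-hand side is deterministic and the outer Wasserstein distance over laws inherits the bound $|x-y|e^{-rt}$, proving the first inequality. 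Taking instead $c_t=N_t$ gives $W_{|\cdot|}(\mathbf{Z}^x_t/N_t,\mathbf{Z}^y_t/N_t)\le|x-y|/N_t$, hence $W_{W_{|\cdot|}}(\mathcal{L}(\mathbf{Z}^x_t/N_t),\mathcal{L}(\mathbf{Z}^y_t/N_t))\le|x-y|\,\E[1/N_t]$.

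Finally I would compute $\E[1/N_t]$: since every death produces exactly two offspring at constant rate $r$, $(N_t)_{t\ge0}$ is a Yule process from $1$, so $N_t$ is geometric with $\p(N_t=k)=p(1-p)^{k-1}$, $p=e^{-rt}$, and $\E[1/N_t]=\sum_{k\ge1}\tfrac{p(1-p)^{k-1}}{k}=-\tfrac{p}{1-p}\ln p=\tfrac{rt\,e^{-rt}}{1-e^{-rt}}$, which is the second inequality. The only genuinely delicate points are setting up the simultaneous coupling --- clean precisely because $r$ is constant and the motion deterministic, so nothing in the dynamics reacts to the spatial gap --- and checking that the diagonal plan is admissible, i.e. that the two coupled empirical measures always have equal total mass, which holds because they share one genealogy.
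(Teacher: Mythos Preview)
Your proof is correct and follows essentially the same route as the paper: couple both systems on a single Yule genealogy (possible because $r$ is constant), observe that the total lineagewise discrepancy $\sum_{u\in\mathcal{V}_t}|X^{u,x}_t-X^{u,y}_t|$ is conserved and equals $|x-y|$, and then divide by $\E[N_t]=e^{rt}$ or by $N_t$ and use $\E[1/N_t]=\frac{rt\,e^{-rt}}{1-e^{-rt}}$ for the geometric Yule process. Your write-up is in fact a bit more explicit than the paper's (you name the diagonal transport plan and carry out the computation of $\E[1/N_t]$), and your treatment with general fragments $\rho,1-\rho$ already anticipates the generalisation the paper records in the remark following the theorem.
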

This result does not give a bound for $W_{W_{|\cdot|}} \left(\mathcal{L} \left( \mathbf{Z}_t / \E[N_t] \right), \mathcal{L} \left( W \pi\right)\right)$ or $W_{W_{|\cdot|}} \left(\mathcal{L} \left( \mathbf{Z}_t / N_t \right), \mathcal{L} \left( \pi\right)\right)$, where $\pi$ is the limit measure of Corollary \ref{cvmitointro}.

\begin{proof}[proof of Theorem \ref{vitess}]
By homogeneity, we can see our branching measure $\mathbf{Z}$ as a process indexed by a Galton-Watson tree \cite{GW}. For our coupling, we take two processes indexed by the same tree. More precisely, as the branching time does not depend on the position, we can set the same times to our two processes. Let $\mathcal{T} = \bigcup_{n \in \N} \{1,2\}^n  $ representing cells that have lived at a certain moment. Let $(d_u)_{u \in \mathcal{U}}$ be a family of i.i.d. exponential variables with mean $1/r$, which model the lifetimes. We build $\mathbf{Z}^x$ and $\mathbf{Z}^y$ by induction. First, for all $t \in [0, d_\emptyset), \ X^\emptyset_t= x + t$ and $Y^\emptyset_t= y + t$. We set $\alpha(\emptyset) =0$. Then, for all $u \in \mathcal{T}$ and $k \in \{ 1, 2  \}$, we set $\alpha(uk) = \alpha(u) + d_u$ and
$$
\forall t \in [\alpha(uk), \alpha(uk) + d_{uk}), \ X^{uk}_t=  \frac{1}{2} X^{u}_{\alpha(uk)-} + t - \alpha(uk)
$$ 
and $Y^{uk}_t =  Y^{u}_{\alpha(uk)-} / 2 + t - \alpha(uk)$. Finally we have $\mathcal{V}_t = \{ u \in \mathcal{T} \ | \  \alpha(u) \leq t < \alpha(u) + d_u  \}$ and
$$
\mathbf{Z}^x_t = \sum_{ u \in \mathcal{V}_t} \delta_{X^u_t} \ \text{ and } \ \mathbf{Z}^y_t = \sum_{ u \in \mathcal{V}_t} \delta_{Y^u_t}.
$$
We observe that, for any cell $u$, the trajectories of $X^u$ and $Y^u$ are parallel (because they are linear). When a branching occurs, $\sum_{u \in \mathcal{V}_t} |X^u_t - Y^u_t|$ is constant. Hence, we easily deduce that
$$
\sum_{u \in \mathcal{V}_t} |X^u_t - Y^u_t| = |x-y|.
$$
Finally we have, for all $t\geq0$,
\begin{align*}
 W_{|\cdot|} (\mathbf{Z}^{x}_t ,\mathbf{Z}^{y}_t)
&\leq \sum_{u \in \mathcal{V}_t} |X^u_t - Y^u_t| \\
&\leq |x-y|.
\end{align*}
Dividing by $\E[N_t]=e^{-rt}$, we obtain the first bound. For the second bound, a similar computation gives
$$
W_{W_{|\cdot|}} \left(\mathcal{L}\left(\frac{\mathbf{Z}^{x}_t}{N_t}\right) , \mathcal{L}\left(\frac{\mathbf{Z}^{y}_t}{N_t}\right)\right) \leq \E\left[\frac{1}{N_t}\right] |x-y|.
$$
The process $(N_t)_{t\geq0}$ is know to be the Yule's process. It is geometrically distributed with parameter $e^{-rt}$, so we have
$$
\E\left[\frac{1}{N_t}\right]= \frac{rt e^{-rt}}{1- e^{-rt}}.
$$
It ends the proof.
\end{proof} 

\begin{Rq}[Generalisation of Theorem \ref{vitess}]
In the proof of	Theorem \ref{vitess}, we only need that, for all $n\in \N^*$, $\theta\in[0,1]$,$t\geq0$, and $x,y\in E$
$$
 \sum_{j=1}^{n}  | F^{(k)}_j ( X_{t}, \theta ) - F^{(k)}_j( Y_{t}, \theta )| \leq |x-y|
$$
where $X,Y$ are generated by $G$ and start respectively from $x,y$. For instance, we can consider that $X$ is a continuous l\'evy process and the division is a sub-critical fragmentation; namely 
$$
\forall x\in E, \ \forall k \in \N^*, \forall j \leq k,  \ F^{(k)}_j (x, \Theta)= \Theta^{k}_j x,
$$
where $(\Theta^{k}_j)_{j,k}$ is a family of random variable verifying
$$
 \ \sum_{j=1}^k \Theta_{j}^{k} \leq 1 \ \mbox{ and } \ \forall j \in \{1,\dots,k\}, \  \ \Theta_{j}^{k} \in [0,1]. 
$$
\end{Rq}

Even if we do not find an explicit bound, we are able to prove a Wasserstein convergence.

\begin{lem}[Wasserstein convergence]
Under the assumptions of Theorem \ref{vitess}, we have
$$
\lim_{t \rightarrow + \infty} W_{|\cdot|} \left(\frac{\mathbf{Z}_t}{N_t},  \pi \right)= 0 \text{ in probability}.
$$
\end{lem}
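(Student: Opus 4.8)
The plan is to combine the quantitative Wasserstein contraction from Theorem \ref{vitess} with the long-time convergence of the empirical measure established in Corollary \ref{cvmitointro}, using a triangle-inequality argument over the (random) initial configuration. The starting point is the observation, already exploited in the proof of Theorem \ref{vitess}, that when $r$ is constant the process $\mathbf{Z}$ is a branching process indexed by a Galton--Watson (Yule) tree, so that conditionally on $\mathcal{F}_s$ we may restart from each cell alive at time $s$. Writing $\mathbf{Z}_{t} = \sum_{u \in \mathcal{V}_s} \mathbf{Z}^{u}_{t-s}$ where, given $\mathcal{F}_s$, the $\mathbf{Z}^{u}$ are independent copies of $\mathbf{Z}$ started from $\delta_{X^u_s}$, I would estimate $W_{|\cdot|}(\mathbf{Z}_t/N_t,\pi)$ by comparing $\mathbf{Z}_t/N_t$ with a reference measure built from the auxiliary process and $\pi$.

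First I would fix $s$ and compare $\mathbf{Z}_t/N_t$ with $\widehat{\mathbf{Z}}_t/N_t$, where $\widehat{\mathbf{Z}}_t$ is obtained by coupling, along the \emph{same} Yule tree, the restarted clusters $\mathbf{Z}^u$ with clusters $\widehat{\mathbf{Z}}^u$ started from a fixed point $x_\star$; by the coupling of Theorem \ref{vitess} (run from time $s$ with horizon $t-s$) one gets
$$
W_{|\cdot|}\!\left(\frac{\mathbf{Z}_t}{N_t},\frac{\widehat{\mathbf{Z}}_t}{N_t}\right) \leq \frac{1}{N_t}\sum_{u\in\mathcal{V}_s}\big|X^u_s - x_\star\big|,
$$
because along each sub-tree the total transport cost stays equal to its value at the branching time and the linear flow preserves differences. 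Dividing the numerator and denominator by $e^{rs}$ and using Corollary \ref{cvmitointro} (applied to $g(x)=|x-x_\star|\wedge C$, which is continuous and bounded, plus a truncation controlled by the moment bound $\E[\mathbf{Z}_s(x)]$ of Lemma \ref{moments}), the right-hand side converges in probability as $s\to\infty$ to $\int |x-x_\star|\,\pi(dx) \cdot (\text{something finite})$ — so it does \emph{not} vanish; instead I should let $s\to\infty$ only after dividing by $N_t/N_s \approx e^{r(t-s)}$, i.e. keep $s$ fixed, send $t\to\infty$ first, then $s\to\infty$. With $t\to\infty$ and $s$ fixed, $N_t/N_s \to \infty$, so in fact $W_{|\cdot|}(\mathbf{Z}_t/N_t,\widehat{\mathbf{Z}}_t/N_t)\to 0$ is wrong too; the correct split is to bound $W_{|\cdot|}(\mathbf{Z}_t^x/N_t,\mathbf{Z}_t^y/N_t)\le \E[1/N_t]|x-y|\to 0$, average over the law of $X^u_s$, and compare $\widehat{\mathbf{Z}}_t/N_t$ (all clusters from $x_\star$) with $\pi$ via Corollary \ref{cvmitointro}.

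Concretely, the steps are: (i) fix $\varepsilon>0$ and $s$; write $W_{|\cdot|}(\mathbf{Z}_t/N_t,\pi)\le W_{|\cdot|}(\mathbf{Z}_t/N_t,\widehat{\mathbf{Z}}_t/N_t)+W_{|\cdot|}(\widehat{\mathbf{Z}}_t/N_t,\pi)$ with $\widehat{\mathbf{Z}}_t$ the cluster-wise coupling to $\delta_{x_\star}$ along the Yule tree; (ii) bound the first term, conditionally on $\mathcal{F}_s$, by $N_t^{-1}\sum_{u\in\mathcal{V}_s}|X^u_s-x_\star|$ using Theorem \ref{vitess}, then by Cauchy--Schwarz or directly note $N_t^{-1}\sum_{u\in\mathcal{V}_s}|X^u_s-x_\star| = (N_s/N_t)\cdot(\mathbf{Z}_s(|x-x_\star|)/N_s)$, where $N_s/N_t\to 0$ in probability as $t\to\infty$ (Yule), and $\mathbf{Z}_s(|x-x_\star|)/N_s$ is tight by Lemma \ref{moments}, so the product $\to 0$ in probability; (iii) for the second term, since $\widehat{\mathbf{Z}}_t$ is, by the branching property, just $\mathbf{Z}_t$ started from $\delta_{x_\star}$ (re-indexed from time $s$, hence equal in law to $\mathbf{Z}_t$ started from $\delta_{x_\star}$ at time $0$ up to the deterministic shift), Corollary \ref{cvmitointro} gives $\mathbf{Z}_t(g)/N_t\to\pi(g)$ in probability for every bounded continuous $g$, and a standard argument (test against a countable convergence-determining family, plus a uniform-integrability/moment truncation using Lemma \ref{moments} to pass from weak convergence to $W_{|\cdot|}$-convergence on $\R_+$) upgrades this to $W_{|\cdot|}(\widehat{\mathbf{Z}}_t/N_t,\pi)\to 0$ in probability; (iv) combine and let $s\to\infty$ if any residual $s$-dependence remains.

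The main obstacle I anticipate is step (iii): converting the weak (in probability) convergence $\mathbf{Z}_t/N_t\to\pi$ supplied by Corollary \ref{cvmitointro} into convergence in the first Wasserstein distance on $(E,|\cdot|)$, which requires controlling the first moment $\mathbf{Z}_t(x)/N_t$ and showing it does not escape to infinity — this is exactly what Lemma \ref{moments} provides, since $\E[\mathbf{Z}_t(x)]/\E[N_t] = \frac1r - (\frac1r-\E_{\mathbf{z}_0}[x])e^{-rt}$ stays bounded, so after a truncation at level $R$ and a Markov-inequality estimate the tail contribution to $W_{|\cdot|}$ is uniformly small. A secondary technical point is making the cluster-wise coupling along the common Yule tree rigorous, but this is precisely the construction already carried out in the proof of Theorem \ref{vitess}, so it can be invoked essentially verbatim.
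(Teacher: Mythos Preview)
Your argument takes a substantial detour and, at the point where it matters, has a genuine gap.

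\textbf{The coupling steps are superfluous.} Under the assumptions of Theorem \ref{vitess} the process already starts from a single deterministic point $\delta_x$, so the whole cluster-wise coupling of steps (i)--(ii), designed to reduce to a deterministic starting point $x_\star$, buys nothing: you may take $x_\star=x$ and the first term of your triangle inequality is identically zero. All the work therefore lives in step (iii), which is exactly the statement of the lemma you are trying to prove.

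\textbf{The justification of step (iii) is insufficient.} To upgrade weak convergence of $\mathbf{Z}_t/N_t$ to $W_{|\cdot|}$-convergence you need a uniform (in $t$) first-moment tail control of the \emph{random} probability measure $\mathbf{Z}_t/N_t$. Lemma \ref{moments} gives you that $\E[\mathbf{Z}_t(x)]/\E[N_t]$ stays bounded, i.e.\ a bound on the \emph{mean} measure; this does not by itself bound $\mathbf{Z}_t(x\,\1_{\{x>R\}})/N_t$ in probability (note that for the Yule process $e^{rt}/N_t\to 1/W$ with $\E[1/W]=+\infty$, so passing from $\E[N_t]$ to $N_t$ is not innocent). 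A Markov-inequality argument from Lemma \ref{moments} alone does not close this gap.

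\textbf{What the paper does instead.} The paper avoids both issues in one stroke: since $x\mapsto 1+x$ is a Lyapunov function for the auxiliary process, the general limit theorem (Theorem \ref{Gencv}; equivalently Corollary \ref{cvmito} with $a=0$) already yields
\[
\frac{\mathbf{Z}_t}{N_t}(f)\xrightarrow[t\to\infty]{\text{prob.}}\pi(f)
\quad\text{for every continuous }f\text{ with }|f(x)|\le C(1+x),
\]
in particular for $f(x)=x$. One then applies Skorokhod's representation to get almost-sure convergence on another probability space for a countable determining class together with $f(x)=x$, invokes the standard characterization ``weak convergence $+$ convergence of first moments $\Leftrightarrow$ $W_{|\cdot|}$-convergence'' to obtain $W_{|\cdot|}(\mathbf{Z}_t/N_t,\pi)\to 0$ almost surely there, and finally transfers back (convergence in distribution to a constant is convergence in probability). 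The missing ingredient in your proposal is precisely the convergence $\mathbf{Z}_t(x)/N_t\to\pi(x)$ in probability, which the paper obtains from the general theorem rather than from a moment bound on the mean measure.
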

\begin{proof}
As $x \mapsto 1+x$ is a Lyapounov function for the auxiliary process, we have
$$
\lim_{t \rightarrow + \infty} \frac{\mathbf{Z}_t}{N_t}(f) = \pi (f) \text{ in probability},
$$
for all function $f$ such that $|f(x)| \leq C(1 + x)$. The convergence also holds in distribution. By the Skorohod's Theorem, in another probability space, we have,
$$
\lim_{t \rightarrow + \infty} \frac{\mathbf{Z}_t}{N_t}(f) = \pi (f) \text{ a.s.}
$$
for all continuous bounded function and for $f(x)=x$. This convergence is equivalent to the Wasserstein convergence. Thus, by a classical argument of discreteness (Varadarajan Theorem type), we get,
$$
\lim_{t \rightarrow + \infty} W_{|\cdot|} \left(\frac{\mathbf{Z}_t}{N_t},  \pi \right)= 0 \text{ a.s.}.
$$
Hence, in our probability space we get that $\lim_{t \rightarrow + \infty} W_{|\cdot|} \left(\mathbf{Z}_t/N_t,  \pi \right)= 0 $ in distribution. And as the convergence is deterministic, we get the result.
\end{proof}

\subsection{Asymmetric mitosis : Macroscopic approximation}
Now, we do not assume that the division is symmetric. We assume that $F^{(2)}_1 (x,\theta)=F^{-1}(\theta) x$ and $F^{(2)}_2 (x,\theta)= (1-F^{-1}(\theta)) x$. We recall that $F(x) = 1-F(1-x)$. In this case, Equation \eqref{eq:thintro} becomes
$$
\partial_t n(t,x)  + \partial_x n(t,x) + r (x) \ n(t,x) =  2  \E [ \frac{1}{\Theta} r (x/\Theta) n(t,x/\Theta)],
$$
where $n(t,.)$ is the density of $\mathbf{X}_t$. In particular, we deduce that the following P.D.E. has a weak solution:
$$
\partial_t n(t,x)  + \partial_x n(t,x) + r (x) n(t,x) =  \int_x^{+ \infty}  b(x,y) n(t,y) dy 
$$
where $b$ verify the following properties:
\begin{align}
\label{b1}
& b(x,y) \geq 0, b(x,y)= 0 \ \mbox{ for } y<x \\
& \int_0^{+\infty} b(x,y)dx = 2 r(y) \\
& \int_0^{+\infty} x b(x,y)dx = y r(y) \\
& \label{b2} b(x,y)=b(y-x,y).
\end{align}

This equation was studied in \cite{Pert1}. Here, 
\begin{equation}
\label{eq:b}
b(x,y)=\frac{2}{y} r(y) g(\frac{x}{y}),
\end{equation}
where $g$ is the weak density of $F$. We easily prove the equivalence between to verify \eqref{eq:b} and (\ref{b1} - \ref{b2}). Our aim in this section is to describe the limit of the fluctuation process. It is defined  by:
$$
\forall t \in [0, T], \forall n \in \N^*, \ \eta^{(n)}_t = \sqrt{n} (\mathbf{X}^{(n)}_t - \mathbf{X}_t).
$$

\begin{theo}[Central limit Theorem for asymmetric size-structured population]
\label{TCL}
Let $T>0$. Assume  that $\eta^{(n)}_0$ converges in distribution and that
\begin{equation}
\label{eq:X0fini}
\E\left[\sup_{n \geq 1} \int_E (1 + x^2) \ \mathbf{X}^{(n)}_0 (dx) \right] < + \infty.
\end{equation}
Then the sequence $(\eta^{(n)})_{n \geq 1}$ converges in $\mathbb{D}([0,T], C^{-2,0})$ to the unique solution of the evolution equation: for all $f \in C^{2,0}$,
\begin{align}
\label{eqevo}
\eta_t(f) 
&= \eta_0(f) \\
&+ \int_0^t \int_0^{+\infty} f'(x)  + r (x) \left( \int_0^1 f(q x) + f((1-q)x) F(dq)-f(x)\right) \eta_s(dx)  ds \nonumber  \\
&+ \widetilde{\mathbf{M}}_t(f), \nonumber
\end{align}
where $\widetilde{\mathbf{M}}(f)$ is a martingale and a Gaussian process with bracket:
$$
\langle \widetilde{\mathbf{M}}(f) \rangle_t = \int_0^t \int_0^{+\infty} 2 f'(x) f(x) + 2 r(x) \int_0^1 ( f(qx) -f(x))^2 F(dq) \ \mathbf{X}_s (dx) ds. 
$$
And $C^{2,0}$ is the set of $C^2$ functions, such that $f,f',f''$ vanish to zero when $x$ tends to infinity. $C^{-2,0}$ is its dual space.
\end{theo}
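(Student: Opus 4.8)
The plan is to follow the standard route for fluctuation limits of measure-valued branching processes (as in \cite{MR,FM,TheseT}): derive a semimartingale decomposition for $\eta^{(n)}$, realise it in a suitable Hilbertian space of distributions, prove tightness there, identify every limit point as a solution of \eqref{eqevo}, and conclude by uniqueness for that linear equation. First, combining Lemma \ref{semimart2} applied to $\mathbf{X}^{(n)}$ with the deterministic equation \eqref{eq:thintro} satisfied by $\mathbf{X}$, subtracting, and multiplying by $\sqrt{n}$, one obtains for every $f\in C^{2,0}$ (first for $f\in C^\infty_c$, then by density)
\[
\eta^{(n)}_t(f)=\eta^{(n)}_0(f)+\int_0^t \eta^{(n)}_s(\mathcal{G}f)\,ds+\widetilde{\mathbf{M}}^{(n)}_t(f),
\]
where $\mathcal{G}$ is the operator of \eqref{eq:thintro} specialised to \eqref{notmit1}--\eqref{notmit2} and $\widetilde{\mathbf{M}}^{(n)}_t(f)=\sqrt{n}\,\mathbf{M}^{(n)}_t(f)$ is a c\`adl\`ag square-integrable martingale. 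Scaling the bracket of Lemma \ref{semimart2} by $n$ gives $\langle\widetilde{\mathbf{M}}^{(n)}(f)\rangle_t=\int_0^t \mathbf{X}^{(n)}_s(\gamma_f)\,ds$ for an explicit carr\'e-du-champ functional $\gamma_f$ with $|\gamma_f(x)|\le C_f(1+x^2)$, namely the integrand of the bracket displayed in \eqref{eqevo}. Since the jumps of $\mathbf{X}^{(n)}$ are of size $O(1/n)$, those of $\eta^{(n)}$ are $O(1/\sqrt n)$, so any limit point will be continuous.

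Next I would fix the functional framework following \cite{MR}. Introduce the scale of weighted Sobolev spaces $W^{j,\alpha}_0$ (weak derivatives up to order $j$ square-integrable against the weight $(1+x^2)^{-\alpha}$), and choose $j_1<j_0$ and $\alpha_1<\alpha_0$ so that $C^{2,0}$ embeds continuously into $W^{j_1,\alpha_1}_0$, the embedding $W^{-j_1,-\alpha_1}_0\hookrightarrow W^{-j_0,-\alpha_0}_0$ is Hilbert--Schmidt, and the maps $f\mapsto\mathcal{G}f$ and $f\mapsto\gamma_f$ are bounded between the corresponding spaces; here the weight must be heavy enough to absorb the (possibly linear) growth of $r$, which is the reason a plain $C^0$ setting does not suffice. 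Then $\eta^{(n)}\in\mathbb{D}([0,T],W^{-j_0,-\alpha_0}_0)$, and it is enough to prove tightness in that space and to identify the limit as a $C^{-2,0}$-valued process.

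The engine of the tightness argument is the uniform bound
\[
\sup_{n\ge1}\E\left[\sup_{t\le T}\| \eta^{(n)}_t \|_{-j_0,-\alpha_0}^2\right]<+\infty .
\]
Expanding the squared norm over a Hilbert basis and using the decomposition above, this follows from Doob's inequality applied to each $\widetilde{\mathbf{M}}^{(n)}(f)$ — whose bracket is controlled by $C_f\,\E[\sup_{t\le T}\mathbf{X}^{(n)}_t(1+x^2)]$, which is finite and uniformly bounded by Lemma \ref{CS} and assumption \eqref{eq:X0fini} — together with the $L^2$-boundedness of $\eta^{(n)}_0$ (from the hypothesis on $\eta_0^{(n)}$) and a Gronwall estimate for the linear drift. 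The same computations bound the increments of the drift term and of $\langle\widetilde{\mathbf{M}}^{(n)}(f)\rangle$ over small time intervals, giving the Aldous condition; with the Rebolledo criterion (as already used in Section \ref{sectlargepop}) this yields tightness of $(\eta^{(n)})_{n\ge1}$ in $\mathbb{D}([0,T],W^{-j_0,-\alpha_0}_0)$.

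It then remains to identify a limit point $\eta$. The initial and drift terms pass to the limit by continuity, being linear in $\eta^{(n)}$ with $\mathcal{G}f$ in the dual weight class. The martingale $\widetilde{\mathbf{M}}^{(n)}(f)$ converges to a continuous square-integrable martingale $\widetilde{\mathbf{M}}(f)$ whose bracket is the limit of $\int_0^t\mathbf{X}^{(n)}_s(\gamma_f)\,ds$; since $\mathbf{X}^{(n)}\to\mathbf{X}$ with $\mathbf{X}$ deterministic (Theorem \ref{th:Grandpopintro}, together with the uniform moment bounds to handle the growth of $\gamma_f$ beyond bounded continuous test functions), this limit is the deterministic bracket in \eqref{eqevo}, so $\widetilde{\mathbf{M}}(f)$ is Gaussian and the polarised brackets fix its covariance. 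Hence $\eta$ solves \eqref{eqevo}, and uniqueness for that linear equation — via its mild formulation against the evolution family generated by $\mathcal{G}$, or directly by a Gronwall estimate on $\E[\|\eta_t\|^2]$ since the driving martingale is given — upgrades tightness to convergence in $\mathbb{D}([0,T],C^{-2,0})$. The main obstacle is precisely the calibration in the second and third steps: producing a single weighted Sobolev scale in which $C^{2,0}$ embeds, the relevant injection is Hilbert--Schmidt, the unbounded coefficient $r$ is tamed, and the a priori $L^2$-bound on $\eta^{(n)}$ genuinely closes by Gronwall; everything else is bookkeeping built on Theorem \ref{th:Grandpopintro}, Lemma \ref{CS} and Lemma \ref{semimart2}.
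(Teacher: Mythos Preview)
Your proposal is correct and follows essentially the same route as the paper: semimartingale decomposition of $\eta^{(n)}$ from Lemma \ref{semimart2}, realisation in a weighted Sobolev dual (the paper uses the concrete pair $W^{-2,1}\hookrightarrow C^{-2,0}$ rather than your abstract $W^{-j_1,-\alpha_1}\hookrightarrow W^{-j_0,-\alpha_0}$), tightness via the Aldous--Rebolledo criterion using a trace/Parseval argument together with Lemma \ref{CS} and \eqref{eq:X0fini}, identification of the Gaussian martingale through convergence of the brackets, and uniqueness by Gronwall.

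Two small remarks. First, the paper works throughout this subsection with bounded $r$ (the constant $\bar r$ appears in the tightness estimate), so your worry about taming linear growth of $r$ in the weight is not needed here; the concrete choice $W^{2,1}$ already does the job. Second, you invoke ``$L^2$-boundedness of $\eta^{(n)}_0$ from the hypothesis on $\eta_0^{(n)}$'', but the stated hypothesis is only convergence in distribution; the paper is equally casual at this point (it absorbs $\Vert\eta^{(n)}_0\Vert^2_{W^{-2,1}}$ into a constant without comment), so you are not introducing a new gap, merely reproducing one.
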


Lemma \ref{semimart2} gives
$$
\forall \ t \geq 0, \ \eta^{(n)}_t = \eta^{(n)}_0 + \widetilde{\mathbf{V}}^{(n)}_t + \widetilde{\mathbf{M}}^{(n)}_t,
$$
where for any $f$ smooth enough,
$$
\widetilde{\mathbf{V}}^{(n)}_t  (f) = \int_0^t \int_0^{+\infty} f'(x)  + r(x) \left( \int_0^1 f(q x) + f((1-q)x) F(dq)-f(x)\right) \ \eta^{(n)}_s(dx) ds,
$$
and $\widetilde{\mathbf{M}}^{(n)}$ is a martingale with bracket:
\begin{equation}
\label{croch}
\langle \widetilde{\mathbf{M}}^{(n)} (f) \rangle_t = \int_0^t \int_0^{+\infty} 2 r(x) \int_0^1 ( f(qx) -f(x))^2 F(dq) \ \mathbf{X}^{(n)}_s (dx)  \  ds .
\end{equation}

As the set of signed measure is not metrizable, we can not adapt the proof of Theorem \ref{th:Grandpopintro}. Following \cite{M98,TheseT}, we consider $\eta^{(n)}$ as an operator in a Sobolev space, and use the Hilbertian properties of this space to prove tightness. See for instance \cite{M84} for condition to prove tightness on Hilbert spaces. Let us explain the Sobolev space that we will use. Let $p > 0$ and $j\in \N$. The set $W^{j,p}$ is the closure of $C^\infty_c$, which is the set of functions of class $C^{\infty}$ from $\R_+$ into $\R$ with compact support, embedded with the following norm:                                                                                                                                                  
$$
\forall f \in W^{j,p}, \ \Vert f \Vert_{W^{j,p}}^2 = \sum_{k =0}^j \int_0^\infty \left( \frac{f^{(k)}(x)}{1+x^p} \right)^2 dx.
$$ 
The set $W^{j,p}$ is an Hilbert space and we denote by $W^{-j,p}$ its dual space. Let $C^{j,p}$ be the space of function $f$ of class $C^j$ such that:
$$
\forall k \leq j, \ \lim_{x \rightarrow + \infty} \frac{f^{(k)}(x)}{1+x^p} = 0.
$$ 
We embed it with the following norm:
$$
\forall f \in C^{j,p}, \ \Vert f \Vert_{C^{j,p}} = \sum_{k =0}^j \sup_{x \geq 0} \frac{f^{(k)}(x)}{1+x^p}.
$$
The set $C^{j,p}$ is also a Banach space and we denote by $C^{-j,p}$ its dual space. These spaces verify the following continuous injection \cite{M98,A75}:
\begin{equation}
\label{injsob}
 C^{j,p} \subset W^{j,p+1} \ \text{ and } \  W^{1+j,p} \subset C^{j,p}.
\end{equation}
Or equivalently, if $f$ is smooth enough,
$$
\Vert f \Vert_{W^{j,p+1}} \leq C \Vert f \Vert_{C^{j,p}} \ \text{ and } \   \Vert f \Vert_{C^{j,p}} \leq C \Vert f \Vert_{W^{j+1,p}}.
$$
The first embedding/inequality prove that the tightness in $W^{j,p+1}$ implies the tighness in $C^{j,p}$. The second is useful for some upper bounds. For instance, we have
\begin{lem}
If $(e_k)_{k\geq 1}$ is a basis of $W^{2,1}$ then we have, for all $k\geq 0$ and $x\in E$,
$$
 \sum_{k \geq 1 } e_k(x)^2 \leq C (1+x^2).
$$
\end{lem}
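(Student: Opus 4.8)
The plan is to combine the continuous embedding $W^{2,1}\subset C^{1,1}$ furnished by \eqref{injsob} with the Riesz representation theorem in the Hilbert space $W^{2,1}$. Throughout I understand $(e_k)_{k\geq1}$ to be a Hilbert (orthonormal) basis of $W^{2,1}$, and I write $\langle\cdot,\cdot\rangle$ for its inner product.

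First I would show that, for every fixed $x\in E=\R_+$, the evaluation functional $\delta_x:f\mapsto f(x)$ is bounded on $W^{2,1}$, with operator norm growing at most linearly in $x$. Indeed, applying the second inclusion of \eqref{injsob} with $j=1$, $p=1$ gives a constant $C>0$, independent of $x$ and $f$, such that $\|f\|_{C^{1,1}}\leq C\,\|f\|_{W^{2,1}}$; hence
$$
|f(x)|\leq (1+x)\,\sup_{y\geq0}\frac{|f(y)|}{1+y}\leq (1+x)\,\|f\|_{C^{1,1}}\leq C(1+x)\,\|f\|_{W^{2,1}},
$$
so that $\|\delta_x\|_{(W^{2,1})^\ast}\leq C(1+x)$.

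Next, by the Riesz representation theorem there exists $g_x\in W^{2,1}$ with $f(x)=\langle f,g_x\rangle$ for all $f\in W^{2,1}$ and $\|g_x\|_{W^{2,1}}=\|\delta_x\|_{(W^{2,1})^\ast}\leq C(1+x)$. Using this reproducing identity on each basis vector, $\langle e_k,g_x\rangle=e_k(x)$, and invoking Parseval's identity, I obtain
$$
\sum_{k\geq1}e_k(x)^2=\sum_{k\geq1}\langle e_k,g_x\rangle^2=\|g_x\|_{W^{2,1}}^2\leq C^2(1+x)^2\leq 2C^2(1+x^2),
$$
which is the asserted bound (with $2C^2$ in place of $C$), using $2x\leq 1+x^2$.

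The argument is essentially routine, so there is no genuine obstacle; the only point requiring care is to read off from \eqref{injsob} the pointwise estimate on $|f(x)|$ with exactly the linear weight $(1+x)$, so that after squaring one lands precisely on the growth $1+x^2$ permitted by the statement. One should also note that the constant $C$ in the embedding $W^{2,1}\subset C^{1,1}$ is independent of $x$, which is what makes the final bound uniform.
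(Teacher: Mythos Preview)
Your proposal is correct and follows essentially the same route as the paper: bound the evaluation functional $\delta_x$ on $W^{2,1}$ via the Sobolev embedding \eqref{injsob}, then identify $\sum_{k\geq1}e_k(x)^2$ with $\|\delta_x\|_{W^{-2,1}}^2$ through Parseval/Riesz. The only cosmetic difference is that the paper invokes the coarser chain $|\delta_x f|\leq (1+x)\|f\|_{C^{0,1}}\leq C(1+x)\|f\|_{W^{1,1}}\leq C(1+x)\|f\|_{W^{2,1}}$ and states Parseval directly, whereas you pass through $C^{1,1}$ and spell out the Riesz representer $g_x$; both yield the same bound.
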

\begin{proof}
$ \delta_x:f \mapsto f(x)$ is an operator on $W^{2,1}$. We have, for all $f \in W^{2,1}$,
$$
 |\delta_x f| \leq (1+x) \Vert f \Vert_{C^{0,1}} \leq C (1+x) \Vert f \Vert_{W^{1,1}} \leq C (1+x) \Vert f \Vert_{W^{2,1}}   
$$
But, by Parseval's identity we get,
$$
\Vert \delta_x \Vert_{W^{-2,1}}^2 = \sum_{k\geq 1} e_k(x)^2,
$$
which completes the proof.
\end{proof}
We introduce the trace $\left( \langle \langle \widetilde{\mathbf{M}}^{(n)} \rangle \rangle_t \right)_{t \geq 0}  $ of $\left(\widetilde{\mathbf{M}}^{(n)}_t\right)_{t \geq 0}$. It is defined such that $$\left( \Vert \widetilde{\mathbf{M}}^{(n)}_t \Vert_{W^{-2,1}}^2 - \langle \langle \widetilde{\mathbf{M}}^{(n)} \rangle \rangle_t \right)_{t\geq0}$$ is a local martingale. Then since
$$
\left\Vert \widetilde{\mathbf{M}}^{(n)}_t \right\Vert_{W^{-2,1}}^2 = \sum_{k \geq 1} \widetilde{\mathbf{M}}^{(n)}_t(e_k),
$$
where $(e_k)_{k \geq 1}$ is a basis of $W^{2,1}$. Then by (\ref{croch}), we get
$$
\langle \langle \widetilde{\mathbf{M}}^{(n)} \rangle \rangle_t = \sum_{k \geq 1} \int_0^t \int_0^{+\infty} 2 r(x) \int_0^1 ( e_k(qx) - e_k(x))^2 F(dq) \mathbf{X}^{(n)}_s (dx) ds.
$$
Now, we first prove the tightness of $(\eta^{(n)})_{n\geq 1}$ then Theorem \ref{TCL}
\begin{lem}
\label{etatight}
$(\eta^n)_{n \geq 1}$ is tight in $\mathbb{D}([0,T],W^{-2,1})$
\end{lem}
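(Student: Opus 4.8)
The plan is to combine the semi-martingale decomposition $\eta^{(n)}=\eta^{(n)}_0+\widetilde{\mathbf{V}}^{(n)}+\widetilde{\mathbf{M}}^{(n)}$ recalled above with the tightness criterion for Hilbert-space-valued c\`adl\`ag processes of \cite{M84} (used in the same way in \cite{M98,TheseT}). It is enough to check two points: a \emph{compact-containment} estimate, namely that for each $t\le T$ the laws of $\eta^{(n)}_t$ are tight in $W^{-2,1}$, and an \emph{Aldous condition} for the finite-variation part $\widetilde{\mathbf{V}}^{(n)}$ and for the trace $\langle\langle\widetilde{\mathbf{M}}^{(n)}\rangle\rangle$ of the martingale part (controlling the trace of the bracket suffices for the martingale part, by Rebolledo's criterion). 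Both rest on the moment bound $\sup_{n\ge1}\E[\sup_{t\le T}\mathbf{X}^{(n)}_t(1+x^2)]<+\infty$, which follows from \eqref{eq:X0fini} by propagating the $(1+x^2)$-moment along the dynamics exactly as in Lemma \ref{CS} and \cite{FM}, the subcriticality $F^{(2)}_1(x,\theta)+F^{(2)}_2(x,\theta)=x$ making the jump contribution favourable, together with the a priori control of $\E[\mathbf{Z}_t(1+x)^2]$-type quantities via the many-to-one formula for forks (Lemma \ref{3.9}).

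For compact containment, fix a Hilbertian basis $(e_k)_{k\ge1}$ of $W^{2,1}$, so that $\|\eta^{(n)}_t\|^2_{W^{-2,1}}=\sum_{k\ge1}\eta^{(n)}_t(e_k)^2$; since in a separable Hilbert space a family is relatively compact as soon as it is bounded with a uniformly small tail, it suffices to bound $\E[\eta^{(n)}_t(e_k)^2]$ by a summable sequence $c_k$ not depending on $n$. Using the superposition/branching structure $\mathbf{Z}^{(n)}\egalloi\sum_i\mathbf{Z}^{Y_i}$ of Section \ref{sectlargepop}, $\E[\eta^{(n)}_t(e_k)^2]$ is at most the second moment of a single tree, $\E[\sum_{u\in\mathcal{V}_t}e_k(X^u_t)^2]+\E[\sum_{u\ne v}e_k(X^u_t)e_k(X^v_t)]=:c_k$, which does not involve $n$. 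Summing over $k$ and using the last Lemma (Parseval: $\sum_k e_k(x)^2=\|\delta_x\|^2_{W^{-2,1}}\le C(1+x^2)$) together with Cauchy--Schwarz in $W^{-2,1}$ ($|\sum_k e_k(x)e_k(y)|=|\langle\delta_x,\delta_y\rangle_{W^{-2,1}}|\le C(1+x)(1+y)$), one gets $\sum_k c_k\le C\,\E[\mathbf{Z}_t(1+x^2)]+C\,\E[\mathbf{Z}_t(1+x)^2]<+\infty$ uniformly in $t\le T$, by the first paragraph; this gives the compact containment. A crude bound of the forks term by $\|e_k\|_\infty^2$ would not be summable, so the duality in $W^{-2,1}$ is essential here.

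For the Aldous condition, write $\widetilde{\mathbf{V}}^{(n)}_t(f)=\int_0^t\eta^{(n)}_s(Af)\,ds$ with $Af(x)=f'(x)+r(x)(\int_0^1 f(qx)+f((1-q)x)\,F(dq)-f(x))$; the transport term makes $\mathcal{A}$ lose one derivative, so it maps $W^{3,1}$ boundedly into $W^{2,1}$ and $\|\widetilde{\mathbf{V}}^{(n)}_{t'}-\widetilde{\mathbf{V}}^{(n)}_{s}\|_{W^{-3,1}}\le C\int_s^{t'}\|\eta^{(n)}_u\|_{W^{-2,1}}\,du$; since $\E[\|\eta^{(n)}_u\|_{W^{-2,1}}]\le(\sum_k c_k)^{1/2}$ is bounded uniformly, the increment over $[S_n,S_n+u]$ is $O(u)$ in expectation, and Markov's inequality closes this part (tightness of $\widetilde{\mathbf{V}}^{(n)}$ in $W^{-3,1}$, together with compact containment in $W^{-2,1}$, is what the criterion requires). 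For the trace, $\langle\langle\widetilde{\mathbf{M}}^{(n)}\rangle\rangle_{t'}-\langle\langle\widetilde{\mathbf{M}}^{(n)}\rangle\rangle_{s}=\sum_k\int_s^{t'}\int_0^{+\infty}2r(x)\int_0^1(e_k(qx)-e_k(x))^2\,F(dq)\,\mathbf{X}^{(n)}_u(dx)\,du$, and since $qx\le x$ for $q\in[0,1]$ the last Lemma gives $\sum_k(e_k(qx)-e_k(x))^2\le2\sum_k e_k(qx)^2+2\sum_k e_k(x)^2\le C(1+x^2)$; hence the increment is $\le C\int_s^{t'}\mathbf{X}^{(n)}_u(r\cdot(1+x^2))\,du$, whose expectation is $O(t'-s)$ by the bound on $r$ and the moment bound, and Markov's inequality closes it. Combining the two points with the criterion of \cite{M84} yields tightness of $(\eta^{(n)})_{n\ge1}$ in $\mathbb{D}([0,T],W^{-2,1})$.

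The step I expect to be the main obstacle is the compact containment, more precisely the interplay between the weight $1+x^2$ (which is exactly the moment one is given in \eqref{eq:X0fini}) and the functional-analytic scale: one needs the Sobolev bookkeeping (the injections \eqref{injsob} and the last Lemma) to ensure that these $(1+x^2)$-weighted second-moment estimates really produce an $n$-uniform, summable control of the Parseval coefficients of $\eta^{(n)}_t$; the branching ``forks'' estimate entering $c_k$, which must be handled by duality in $W^{-2,1}$ and via the a priori bound $\E[\mathbf{Z}_t(1+x)^2]<+\infty$ from Lemma \ref{3.9}, is the delicate computation there, and it is the subcriticality $qx\le x$ of the asymmetric division that keeps every jump term inside the available second moment.
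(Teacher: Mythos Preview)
Your overall strategy (criterion of \cite{M84}, Aldous--Rebolledo on $\widetilde{\mathbf V}^{(n)}$ and on the trace $\langle\langle\widetilde{\mathbf M}^{(n)}\rangle\rangle$) matches the paper's, and your treatment of the martingale trace is essentially the paper's computation. The substantive divergence is in how you obtain the a priori bound on $\|\eta^{(n)}_t\|_{W^{-2,1}}$.

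\medskip
\textbf{The gap.} Your compact-containment argument rests on the identity ``$\mathbf Z^{(n)}\egalloi\sum_i\mathbf Z^{Y_i}$ with i.i.d.\ $Y_i$'' from Section~\ref{sectlargepop}, from which you infer $\E[\eta^{(n)}_t(e_k)^2]\le c_k$ with $c_k$ equal to a single-tree second moment, \emph{independent of $n$}. But that i.i.d.\ superposition is only the motivating example at the beginning of Section~\ref{sectlargepop}; the hypotheses of Theorem~\ref{TCL} merely require that $\eta^{(n)}_0$ converge and that \eqref{eq:X0fini} hold. In that generality the decomposition
\[
\eta^{(n)}_t(f)=\frac{1}{\sqrt n}\sum_i\bigl(\mathbf Z^{Y_i^{(n)},i}_t(f)-\E_{Y_i^{(n)}}[\mathbf Z_t(f)]\bigr)
\;+\;\sqrt n\Bigl(\E\bigl[\mathbf X^{(n)}_t(f)\,\big|\,\mathbf X^{(n)}_0\bigr]-\mathbf X_t(f)\Bigr)
\]
has a second ``bias'' term (the deterministic flow applied to $\eta^{(n)}_0$) that you do not control; your bound $\E[\eta^{(n)}_t(e_k)^2]\le c_k$ simply omits it. A second issue is that you invoke Lemma~\ref{3.9} to bound $\E[\mathbf Z_t(1+x)^2]$, but Lemma~\ref{3.9} is stated under Assumption~\ref{hyp:vp} (existence of $(V,\lambda_0)$), which is not part of the hypotheses here; for non-constant $r$ the constant function is not an eigenvector, so that lemma is not available as written.

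\medskip
\textbf{What the paper does instead.} The paper avoids the branching/variance route entirely and closes directly on $\eta^{(n)}$ through its semimartingale decomposition. First, by the preceding lemma $\sum_k e_k(x)^2\le C(1+x^2)$, so $\langle\langle\widetilde{\mathbf M}^{(n)}\rangle\rangle_t\le C\int_0^t\mathbf X^{(n)}_s(1+x^2)\,ds$, and Doob's inequality together with Lemma~\ref{CS} and \eqref{eq:X0fini} give $\E\bigl[\sup_{t\le T}\|\widetilde{\mathbf M}^{(n)}_t\|^2_{W^{-2,1}}\bigr]\le C'$ uniformly in $n$. Then
\[
\|\eta^{(n)}_t\|^2_{W^{-2,1}}\le C\Bigl(\|\eta^{(n)}_0\|^2_{W^{-2,1}}+\|\widetilde{\mathbf M}^{(n)}_t\|^2_{W^{-2,1}}\Bigr)+\|\widetilde{\mathbf V}^{(n)}_t\|^2_{W^{-2,1}},
\qquad
\|\widetilde{\mathbf V}^{(n)}_t\|^2_{W^{-2,1}}\le C\int_0^t\sup_{u\le s}\|\eta^{(n)}_u\|^2_{W^{-2,1}}\,ds,
\]
and Gronwall yields $\sup_n\E\bigl[\sup_{t\le T}\|\eta^{(n)}_t\|^2_{W^{-2,1}}\bigr]<\infty$. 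This is exactly the boundedness condition in the criterion, and it uses only \eqref{eq:X0fini} and the convergence (hence boundedness) of $\eta^{(n)}_0$; no i.i.d.\ structure, no fork formula, no eigenelement. Once this bound is in hand, your Aldous step for $\widetilde{\mathbf V}^{(n)}$ and for $\langle\langle\widetilde{\mathbf M}^{(n)}\rangle\rangle$ goes through essentially as you wrote it.
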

\begin{proof}
By \cite[Theorem 2.2.2]{critere} and \cite[Theorem 2.3.2]{critere} (see also \cite[Lemma C]{M98}), it is enough to prove 
\begin{enumerate}
\item $\E\left[\sup_{s \leq t} \Vert \eta^n_s \Vert^2_{W^{-2,1}}\right] < + \infty $,
\item $\forall n \in \N, \ \forall \varepsilon, \rho >0, \ \exists \delta>0$ such that for each stopping times $S_n$ bounded by $T$
$$
\limsup_{n \rightarrow + \infty} \sup_{0 \leq u \leq \delta} \p \left( \left\Vert \widetilde{\mathbf{V}}^{(n)}_{S_n + u} - \widetilde{\mathbf{V}}_{S_n} \right\Vert_{W^{-2,1}} \geq \eta \right) \leq \varepsilon,
$$
$$
\limsup_{n \rightarrow + \infty} \sup_{0 \leq u \leq \delta} \p \left(\left| \left< \left< \widetilde{\mathbf{M}}^{(n)} \right> \right>_{S_n + u} - \left< \left< \widetilde{\mathbf{M}}^{(n)} \right> \right>_{S_n} \right| \geq \eta \right) \leq \varepsilon.
$$
\end{enumerate}
For the first point, using lemma \ref{CS}, there exists $C>0$ such that
\begin{align*}
\sum_{k\geq 1} \left\langle \widetilde{\mathbf{M}}^{(n)}_t (e_k) \right\rangle 
&\leq \int_0^t 2 \bar{r} \int_0^1 2 \sum_{k \geq 1} e_k^2(qx) + 2 \sum_{ k \geq 1} e_k^2(x) \ F(dq) \mathbf{X}^{(n)}_s (dx) ds  \\
&\leq C \mathbf{X}^{(n)}_0 (1 + x).\\
\end{align*}
Then, since
$$
\left\Vert \widetilde{\mathbf{M}}^{(n)}_t \right\Vert^2_{W^{-2,1}} = \sum_{k\geq 1} \left(\widetilde{\mathbf{M}}^{(n)}_t (e_k)\right)^2,
$$
Doob's inequality and \eqref{eq:X0fini} gives
$$
\E\left[\sup_{t \in [0,t]} \left\Vert \widetilde{\mathbf{M}}^{(n)}_t \right\Vert_{W^{-2,1}}^2 \right] \leq C',
$$
where $C'>0$. Then there exits $C''>0$ such that
$$
\left\Vert \eta^{(n)}_t \right\Vert^2_{W^{-2,1}} \leq \left\Vert \eta^{(n)}_0 \right\Vert^2_{W^{-2,1}} + \left\Vert \widetilde{\mathbf{V}}^{(n)}_t \right\Vert^2_{W^{-2,1}} +\left\Vert \widetilde{\mathbf{M}}^{(n)}_t \right\Vert^2_{W^{-2,1}} \leq C'' + \left\Vert \widetilde{\mathbf{V}}^{(n)}_t \right\Vert^2_{W^{-2,1}}.
$$
And as
$$
\left\Vert \widetilde{\mathbf{V}}^{(n)}_t \right\Vert^2_{W^{-2,1}}  \leq C \int_0^t \sup_{w\leq s} \left\Vert \eta^{(n)}_s \right\Vert^2_{W^{-2,1}} ds,
$$
the Gronwall Lemma gives
$$
\E\left[\sup_{s \leq t} \left\Vert \eta^{(n)}_s \right\Vert^2_{W^{-2,1}}\right] \leq K,
$$
for a certain constant $K$. Finally for the second point, we have
\begin{align*}
\E\left[\left\Vert \widetilde{\mathbf{V}}^{(n)}_{S_n + u} - \widetilde{\mathbf{V}}^{(n)}_{S_n} \right\Vert_{W^{-2,1}} \right] 
&\leq \E\left[ K' \int_{S_n}^{S_n + u} \sup_{s \leq T} \left\Vert \eta^{(n)}_s \right\Vert^2_{W^{-2,1}} \right]\\
& \leq K'' u.
\end{align*}
Here $K',K''$ are two constants. Using the Markov-Chebyshev inequality, we prove the Aldous condition. We similarly prove that $\langle \langle \widetilde{\mathbf{M}}^{(n)} \rangle \rangle$ verifies the Aldous condition. We deduce that $(\eta^{(n)})_{n \geq 1}$ is tight.
\end{proof}

\begin{proof}[Proof of Theorem \ref{TCL}]
Let $\widetilde{\mathbf{M}}$ be a continuous Gaussian process with quadratic variation verifying, for every $f \in C^{2,0}$ ($\subset W^{2,1}$) and $t \in [0, T]$,
$$
\langle \widetilde{\mathbf{M}} (f) \rangle_t =\sum_{k \geq 1} \int_0^t \int_0^{+\infty} 2 r(x) \int_0^1 ( f(qx) - f(x))^2 F(dq) \mathbf{X}_s (dx).
$$
Since there exists $C_f$ such that
$$
\forall f \in C^{2,0}, \ \sup_{t \in [0,T]} |\widetilde{\mathbf{M}}^{(n)}(f)| \leq \frac{C_f}{\sqrt{n}} ,
$$
and $\langle \widetilde{\mathbf{M}}^{(n)} \rangle_t$ converges in law to $\langle \widetilde{\mathbf{M}} \rangle_t$, then by \cite[Theorem 3.11 p.473]{Jac}, $\widetilde{\mathbf{M}}^{(n)}(f)$ converges to $\widetilde{\mathbf{M}}(f)$ in distribution, as $n$ tends to $\infty$.\\

By Lemma \ref{etatight} and (\ref{injsob}) , the sequence $(\eta^{(n)})_{n \geq 1}$ is also tight in $C^{-2,0}$. Let $\eta$ be an accumulation point. Since its martingale part $\widetilde{\mathbf{M}}$ in its Doob's decomposition is almost surely continuous, then $\eta$ is also almost surely continuous. Hence, $\eta$ is a solution of (\ref{eqevo}). Using Gronwall's inequality, we obtain the uniqueness of this equation, in $C([0,T],C^{-2,0})$, up to a Gaussian white noise $\widetilde{\mathbf{M}}$. We deduce the announced result.
\end{proof}

\section{Another two examples}

\label{sect:autres-exemples}

\subsection{Space-structured population model}

Here, we study an example which can models the cells localisation. One cell moves following a diffusion on $E \subset \R^d$, $d\geq1$, and when it dies, its offspring is localised at the same place. Hence, in all this section the branching is local; that is
$$
\forall k \geq 0, \forall j \leq k, \forall x \in E, \forall \theta \in [0,1], \  F_j^{(k)}(x, \theta) = x.
$$

\subsubsection{Branching Ornstein Uhlenbeck}
In this subsection, we consider the model of \cite[Example 10]{EHK10}. Assume that $G$ is given by
$$
Gf(x)=\frac{1}{2} \sigma^2 \Delta f(x) - g x . \nabla f(x),
$$
where $\sigma,g>0$, f is smooth, $x\in \R^d$. Also assume that the division is dyadic, that is $p_2=1$, with rate
$$
r(x)=b x^2 +a,
$$
where $a,b\geq 0$ and $a$ or $b$ is not null. Here $x^2= \Vert x\Vert^2 =x.x$. If $g > \sqrt{2b}$ then we add the following notations:
$$
\Gamma = \frac{ g -\sqrt{g^2- 2b\sigma^2}}{2 \sigma^2} \ \text{ and } \ \alpha= \sqrt{g^2 - 2b\sigma^2}.
$$
We also denote by $\pi_\infty$ the Gaussian measure whose density is defined by
$$
x\mapsto \left(\frac{\alpha}{\pi \sigma^2}\right) \exp\left(- \frac{\alpha}{\sigma^2} x^2 \right).
$$
From our main theorem, we deduce

\begin{coro}[Limit theorem for an branching Ornstein-Uhlenbeck process]
If $g > \sigma \sqrt{2b}$ and $X_0^{\emptyset} =x \in \R^d$ then, for any continuous and bounded $f$, we have
$$
\lim_{t \rightarrow + \infty} \frac{1}{N_t}  \sum_{u \in \mathcal{V}_t} f(X^u_t) = \frac{\int_{\R^d} f(y) e^{\Gamma y^2} \pi_\infty(dy)}{\int_{\R^d} e^{\Gamma y^2} \pi_\infty(dy)},
$$
in probability. In particular,
$$
\E[N_t]= e^{\lambda t + \Gamma x^2}  \left(\frac{\alpha}{\pi \sigma^2}\right) \int_{\R^d} e^{-\Gamma y^2} \exp\left(- \frac{\alpha (y -x e^{- \alpha t/\sigma^2})^2}{\sigma^2 (1- e^{- 2\alpha t/\sigma^2} )} \right) dy,
$$
where $\lambda=\frac{g -\sqrt{g^2- 2b\sigma^2}}{2 } + a$ is the Malthus parameter.
\end{coro}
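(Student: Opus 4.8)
The plan is to identify this model as the special case of Section~\ref{secttpslg} in which the branching is dyadic and \emph{local}, and then to invoke Theorem~\ref{thintro}; the substance is exhibiting the eigenpair $(V,\lambda_0)$, recognising the auxiliary process, and checking the hypotheses of that theorem despite $r$ being unbounded. Since $p_2=1$ and $F^{(k)}_j(x,\theta)=x$, the operator of Section~\ref{secttpslg} is the Schr\"odinger operator
$$
\mathcal{G}f(x)=\tfrac12\sigma^2\Delta f(x)-g\,x\cdot\nabla f(x)+\bigl(b\|x\|^2+a\bigr)f(x).
$$
I would try the Gaussian ansatz $V(x)=e^{\Gamma\|x\|^2}$: a short computation (using $\nabla V=2\Gamma x\,V$ and $\Delta V=(2\Gamma d+4\Gamma^2\|x\|^2)V$) shows $\mathcal{G}V=\lambda_0 V$ as soon as $\Gamma$ annihilates the coefficient of $\|x\|^2$, i.e. $2\sigma^2\Gamma^2-2g\Gamma+b=0$. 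The relevant root is $\Gamma=(g-\sqrt{g^2-2b\sigma^2})/(2\sigma^2)$, which is real and strictly positive precisely when $g>\sigma\sqrt{2b}$; one then has $V\ge1$ and $\lambda_0=d\sigma^2\Gamma+a$ (reducing to the stated $\lambda$ when $d=1$).

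Because $r(x)=b\|x\|^2+a$ is unbounded, Lemma~\ref{lem:rkborne} does not apply; I would first obtain non-explosion by localisation, stopping at $\tau_n=\inf\{t:N_t\ge n\}$ and using optional stopping on the local martingale $(\mathbf{Z}_t(V)e^{-\lambda_0 t})_{t\ge0}$ of Lemma~\ref{lem:Vmart} together with $\mathbf{Z}_{\tau_n}(V)\ge N_{\tau_n}=n$ on $\{\tau_n\le t\}$ to get $\p(\tau_n\le t)\le V(x)e^{\lambda_0 t}/n\to0$, whence Assumption~\ref{hyp:vp} holds. Next I identify the auxiliary process: a direct computation of the $h$-transform $Mf=(G(fV)-fGV)/V$ with $V=e^{\Gamma\|x\|^2}$ turns the drift $-g\,x$ into $(2\sigma^2\Gamma-g)x=-\alpha\,x$, so $Mf(x)=\tfrac12\sigma^2\Delta f(x)-\alpha\,x\cdot\nabla f(x)$, while the jump part $J$ vanishes identically because local branching makes the weighted barycentre of the (two, equal) offspring contributions equal to $f(x)$. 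Thus $Y$ is an Ornstein--Uhlenbeck process with inward drift; it is ergodic, and its invariant law is the Gaussian $\pi_\infty$ with density proportional to $e^{-\alpha\|y\|^2/\sigma^2}$.

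It then remains to feed this into Theorem~\ref{thintro}. For $f$ continuous and bounded, $f/V$ and $1/V$ are continuous and bounded (since $V\ge1$); and with $p_2=1$ and locality one has $J_2(Vh,Vh)=2V^2h^2$, so the integrability hypotheses of Theorem~\ref{thintro} reduce, after taking $h$ constant, to $\E_x[V(Y_s)r(Y_s)]=\E_x[(b\|Y_s\|^2+a)e^{\Gamma\|Y_s\|^2}]$ staying bounded in $s$ — a Gaussian moment computation. Theorem~\ref{thintro} (in its $V$-bounded-below form) then yields $N_t^{-1}\sum_{u\in\mathcal{V}_t}f(X^u_t)\to\pi(f/V)/\pi(1/V)$ in probability, which, since $1/V=e^{-\Gamma\|\cdot\|^2}$, is exactly the stated ratio of Gaussian integrals against $\pi_\infty$. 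For the mean population size, $N_t=\mathbf{Z}_t(\1)$ and $\1=(1/V)\,V$, so the weighted many-to-one formula (Lemma~\ref{lem:WMTO}) applied with $f=1/V$ gives $\E[N_t]=\E[\mathbf{Z}_t(V)]\,\E_x[1/V(Y_t)]=V(x)e^{\lambda_0 t}\,\E_x[e^{-\Gamma\|Y_t\|^2}]$; since $Y_t$ is Gaussian, $\E_x[e^{-\Gamma\|Y_t\|^2}]$ is an explicit Gaussian integral, which gives the displayed formula (and exhibits $\lambda_0$ as the exponential growth rate of $\E[N_t]$, i.e. the Malthus parameter).

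The only real obstacle is the unbounded rate $r$: besides the localisation needed for non-explosion, the delicate point is the integrability hypothesis of Theorem~\ref{thintro}, which requires the Ornstein--Uhlenbeck exponential moment $\E_x[e^{\Gamma\|Y_s\|^2}]$ to be finite uniformly in $s$ — equivalently, the stationary variance $\sigma^2/(2\alpha)$ of $Y$ to be smaller than $1/(2\Gamma)$, i.e. $\Gamma<\alpha/\sigma^2$. This is the one quantitative step where the precise relation between $g$, $b$ and $\sigma$ enters; everything else is a direct substitution into the results of Section~\ref{secttpslg}.
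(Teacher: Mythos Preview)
Your approach coincides with the paper's: exhibit $V(x)=e^{\Gamma\|x\|^2}$ as an eigenvector of $\mathcal{G}$ and then invoke Theorem~\ref{thintro} and Lemma~\ref{lem:WMTO}; the paper's own proof is literally those two sentences, leaving the verifications you spell out (non-explosion, identification of the auxiliary Ornstein--Uhlenbeck process and its invariant law $\pi_\infty$, the formula for $\E[N_t]$ via $V(x)e^{\lambda_0 t}\E_x[1/V(Y_t)]$) entirely implicit.

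One refinement of your closing caveat: among the moment hypotheses of Theorem~\ref{thintro} the binding one is $\E[V^2(Y_t)]=\E_x\bigl[e^{2\Gamma\|Y_t\|^2}\bigr]\le Ce^{\alpha' t}$, not merely $\E[r(Y_t)V(Y_t)]$; the Gaussian finiteness condition you should therefore be checking is $2\Gamma\cdot\sigma^2/(2\alpha)<1/2$, i.e.\ $2\Gamma\sigma^2<\alpha$ (equivalently $g>\sigma\sqrt{8b/3}$), which is strictly stronger than the $\Gamma<\alpha/\sigma^2$ you wrote and, as you rightly suspect, strictly stronger than the stated hypothesis $g>\sigma\sqrt{2b}$ --- a gap the paper's terse argument does not address either.
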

\begin{proof}
If $V :x \mapsto e^{\lambda x^2}$ then it is an eigenvector of $\mathcal{G}$, which is defined for any smooth $f$ by
$$
\mathcal{G} f (x) = Gf(x) + r(x) f(x).
$$ 
We conclude the proof using Theorem \ref{thintro} and Lemma \ref{lem:WMTO}.
\end{proof}

\begin{Rq}[Another eigenelement]
Note that if $V_2 :x \mapsto e^{\lambda_2 x^2}$ then it is an eigenvector of $\mathcal{G}$, associated to the eigenvalue
$$
\lambda_2 = \frac{g +\sqrt{g^2- 2b\sigma^2}}{2 } + a.
$$
But in this case, the auxiliary process is not ergodic and we are not able to deduce any convergence from our main theorem.
\end{Rq}

\subsubsection{General case} Let us assume that $G$ is the generator of a diffusive Markov process. If the state space $E$ is bounded then we can find sufficient conditions to the eigenproblem in \cite[section 3]{Pinsky} and \cite[Theorem 5.5]{Pinsky}. For instance, under some assumptions, we have
$$
\lambda_0 = \lim_{t \rightarrow + \infty} \ln\left( \sup_{x\in E} \E\left[N_t \ | \ X_0^\emptyset = x \right] \right).
$$

If $E$ is not bounded  then we can see \cite{HS96,RS78}. This example is developed in \cite{EHK10}. They prove a strong law of large number, which is close to Theorem \ref{thintro}.

\subsection{Self-similar fragmentation} 
Self-similar mass fragmentation processes are characterised by
\begin{itemize}
\item the index of self-similarity $\alpha\in \R$;
\item a so-called dislocation measure $\nu$ on $\mathcal{S}=\{ s=(s_i)_{i\in \N} \ | \ \lim_{i \rightarrow + \infty} s_i = 0 , 1 \geq s_j \geq s_i \geq 0, \forall j \leq i \}$ which satisfies
$$
\nu ({ 1,0,0,..}) =0 \ \text{ and } \ \int_{\mathcal{S}} (1-s) \nu(ds) < + \infty.
$$
\end{itemize}
If $\nu(\mathcal{S})< + \infty$ then the dynamics is as follows:
\begin{itemize}
\item a block of mass $x$ remains unchanged for exponential periods of time with parameter $x^\alpha \nu(\mathcal{S})$;
\item a block of mass $x$ dislocates into a mass partition $xs$, where $s\in \mathcal{S}$, at rate $\nu(ds)$;
\item there are finitely many dislocations over any finite time horizon.
\end{itemize}
The last point is not verified when $\nu(\mathcal{S}) = + \infty$. In this case, there is a countably infinite number of dislocations over any finite time horizon. So, when $\nu(\mathcal{S})<+\infty$, our setting capture this model with the following parameters: 
$$
G = 0, \ r(x) = x^\alpha \nu(\mathcal{S}), \ \int_0^1 \sum_{k\geq0} p_k(x) \sum_{j=1}^k f(F_j^{k}x,\theta) d\theta = \int_\mathcal{S} \sum_{i\geq0} f(s_i x) \frac{\nu(ds)}{ \nu(\mathcal{S})}.
$$
Hence, in this case we have
$$
\mathcal{G} f(x) = x^\alpha \nu(\mathcal{S}) \sum_{i \geq 0 } \left( \int_\mathcal{S} \sum_{i \geq 0} f(s_i x) \frac{\nu(ds)}{\nu(\mathcal{S})} - f(x) \right),
$$
and $V:x\mapsto x^p$ is an eigenevector. See \cite{B06} for further details.\\
\\
\textbf{Acknowledgment:} I would like to express my gratitude to my Ph.D. supervisor Djalil Chafa\"{i} for his encouragement, and essential help on the form and the content of this paper. I also thank, Viet Chi Tran for pointing out some references and for fruitful discussions.

\bibliographystyle{abbrv} 
\bibliography{ref1}

\end{document}